\documentclass[a4paper,10pt]{amsart}
\usepackage{amsmath,amssymb,latexsym, mathtools}
\usepackage{amscd}
\usepackage{amsmath}
\usepackage{amssymb}
\usepackage{amsthm}
\usepackage{tikz-cd}
\usepackage[frame,cmtip,curve,arrow,matrix,line,graph]{xy}
\usepackage{comment}

\headheight=8pt
\topmargin=0pt
\textheight=624pt
\textwidth=432pt
\oddsidemargin=18pt
\evensidemargin=18pt

\theoremstyle{plain}
\newtheorem{thm}{Theorem}[section]
\newtheorem{prop}[thm]{Proposition}
\newtheorem{lem}[thm]{Lemma}

\newtheorem{defi}[thm]{Definition}
\newtheorem{rmk}[thm]{Remark}

\newtheorem{prop-defi}[thm]{Proposition-Definition}
\newtheorem{thm-defi}[thm]{Theorem-Definition}
\newtheorem{lem-defi}[thm]{Lemma-Definition}

\newtheorem{conj}[thm]{Conjecture}

\newcommand{\aA}{\mathcal{A}}
\newcommand{\bB}{\mathcal{B}}
\newcommand{\cC}{\mathcal{C}}
\newcommand{\dD}{\mathcal{D}}
\newcommand{\eE}{\mathcal{E}}
\newcommand{\fF}{\mathcal{F}}

\newcommand{\hH}{\mathcal{H}}

\newcommand{\lL}{\mathcal{L}}
\newcommand{\mM}{\mathcal{M}}
\newcommand{\nN}{\mathcal{N}}
\newcommand{\oO}{\mathcal{O}}
\newcommand{\pP}{\mathcal{P}}

\newcommand{\rR}{\mathcal{R}}
\newcommand{\sS}{\mathcal{S}}
\newcommand{\tT}{\mathcal{T}}

\newcommand{\wW}{\mathcal{W}}

\newcommand{\zZ}{\mathcal{Z}}

\newcommand{\bbC}{\mathbb{C}}

\newcommand{\bbH}{\mathbb{H}}

\newcommand{\bbP}{\mathbb{P}}
\newcommand{\bbQ}{\mathbb{Q}}
\newcommand{\bbR}{\mathbb{R}}

\newcommand{\bbZ}{\mathbb{Z}}

\newcommand{\Hom}{\mathop{\rm Hom}\nolimits}
\newcommand{\vin}{\rotatebox{90}{$\in$}}

\newcommand{\rk}{\mathop{\rm rk}\nolimits}

\newcommand{\Ext}{\mathop{\rm Ext}\nolimits}

\newcommand{\Coh}{\mathop{\rm Coh}\nolimits}

\newcommand{\Imm}{\mathop{\rm Im}\nolimits}

\newcommand{\Ker}{\mathop{\rm Ker}\nolimits}
\newcommand{\Ree}{\mathop{\rm Re}\nolimits}
\newcommand{\Stab}{\mathop{\rm Stab}\nolimits}

\newcommand{\GL}{\mathop{\rm GL}\nolimits}

\begin{document}

\title{Stabilty conditions on degenerated elliptic curves}
\author{Tomohiro Karube}
\date{\today}

\address{Graduate School of Mathematical Sciences, The university of Tokyo, Meguro-ku, Tokyo, 153-8914, Japan.}
\email{karube-tomohiro803@g.ecc.u-tokyo.ac.jp}

\begin{abstract}
We study stability conditions on reducible Kodaira curves obtained from degenerations of elliptic curves. 
We describe connected components of the spaces of stability conditions and compute the groups of deck transformations of those connected components.
\end{abstract}

\maketitle

\setcounter{tocdepth}{1}
\tableofcontents
\tableofcontents

\section{Introduction}

The notion of stability conditions on triangulated categories was introduced by Bridgeland in \cite{Bri:07}. They are inspired by the work in string theory \cite{Dou:02}. 
Bridgland proved a strong deformation property \cite{Bri:07} and he also showed that the spaces of stability conditions satisfying certain properties on the triangulated categories are complex manifolds. Those spaces are called \textit{stability manifolds} and have been studied as invariants of the triangulated categories. 

Bridgeland studied stability conditions on the derived category of a K3 surface $X$ in \cite{Bri:08}. He showed that the forgetful map from the connected component $\Stab^{\dagger}(X)$ of the stability manifold to an open set of the numerical Grothendieck group $\nN(X) \otimes \bbC$ is a covering map, and a subgroup of autoequivalences of $X$ is the group of deck transformations.
In this way, the study of the stability manifold $\Stab(X)$ gives a geometric approach to understanding the group $\mathrm{Auteq}(X)$.

This paper aims to give a relationship between the stability manifold $\Stab(C)$ and the group $\mathrm{Auteq}(C)$ in the case when $C$ is the reducible Kodaira curve.
And we will describe one connected component of the space $\Stab(C)$ and compute the group of deck transformations of the forgetful map. 
In the case of smooth elliptic curves or singular irreducible curves of arithmetic genus one, the spaces of stability conditions are known to be isomorphic to the universal covering space $\widetilde{\GL}^{+}(2,\bbR)$ of $\GL^{+}(2,\bbR)$ \cite{Bri:07,BK:06}. 
However, we will show that the stability manifolds of reducible Kodaira curves are not isomorphic to $\widetilde{\GL}^{+}(2,\bbR)$.

\subsection{Result}
Let $C$ be a reducible singular fiber of a relatively minimal elliptic surface $S$ on the following list.
\begin{itemize}
\item[$(I_N)$] a cycle of $N$ projective lines with $N\geq2$.
\item[$(III)$] two projective lines which meet at one point of order 2.
\item[$(IV)$] three projective lines which meet in one point.
\item[$(I^*_N)$] $N+5$ projective lines corresponding to affine Dynkin diagram $\widetilde{D_{N+4}}$.
\item[$(II^*)$] $8$ projective lines corresponding to affine Dynkin diagram $\widetilde{E_{8}}$.
\item[$(III^*)$] $7$ projective lines corresponding to affine Dynkin diagram $\widetilde{E_{7}}$.
\item[$(IV^*)$] $6$ projective lines corresponding to affine Dynkin diagram $\widetilde{E_{6}}$. 
\item[$(_mI_N)$] $C = mC_N$ where $C_N$ is a curve corresponding to $(I_N)$ with $N\geq2$. 
\end{itemize}
The curve $C$ is called the Kodaira curve since Kodaira gave the classification of singular fibers in the case of  elliptically fibered surfaces over a smooth curve \cite{Kod:63}.
Let $C_1,\ldots,C_n$ be irreducible components of $C$ and $\Theta_i$ be the reduced curve of the irreducible curve $C_i$.

%\begin{figure}[h]
%\centering
%\includegraphics[width=7cm]{nlines.jpg}
%\caption{The Kodaira curve of type $(I_n)$}
%\end{figure} 

Let $\dD^b(C)$ denote the bounded derived category of coherent sheaves on $C$, and $\Stab(C)$ be the space of stability conditions on $\dD^b(C)$. By definition of stability conditions, there is a forgetful map
\[
\pi : \Stab(C) \rightarrow G(C)^{\vee}\otimes \bbC 
\]
sending a stability condition $\sigma = (Z,\pP)$ to the central charge $Z \in G(C)^{\vee}\otimes \bbC $. Here, $G(C)$ is the Grothendieck group of $\dD^b(C)$.  As in the case of a smooth curve, there are stability conditions on $\dD^b(C)$ corresponding to the slope stability. We will restrict our attention to the connected component $\Stab^{\dagger}(C)\subset \Stab(C)$ containing them. 

We introduce a pairing $\langle -,- \rangle$ on $G(C)^{\vee}\otimes \bbC$ defined as 
\[
\langle E,F \rangle = -\chi(i_*E,i_*F),
\]where $i$ is an embedding to a relatively minimal elliptic surface $S$ and $\chi$ is the Eular characteristic of $E,F$ on $S$.
Let 
\[
V_0 =\{v \in G(C)\otimes \bbC  \mid \langle v,v\rangle = 0\}
\] be the linear subspace of $G(C)\otimes \bbC$.
Let
\[
\pP_0(C) = 
\left\{ Z \in G(C)^{\vee}\otimes\bbC \;\middle|\;
\begin{array}{l}
Z(\delta) \not = 0 \mathrm{\ for \ all\ }\delta \text{ with }\langle \delta ,\delta \rangle = -2\\
\Ree (Z\circ s) \text{\ and\ } \Imm(Z\circ s) \text{\ are\ linearly\ independent\ in\ } V_0^{\vee}
\end{array}
\right\}
\]
be an open set in $G(C)^{\vee}\otimes \bbC$. Here, $s$ denotes the inclusion $V_0 \hookrightarrow G(C)\otimes \bbC$. 
There are two connected components of $\pP_0(C)$, and $\pP_0^{+}(C)$ stands for a connected component containing the central charge $Z = -\chi+\sqrt{-1}\rk_1+\cdots + \sqrt{-1}\rk_n$ defined in Section 4.

 Let $\mathrm{Auteq}_0(C)$ be the subgroup of $\mathrm{Auteq}(C)$ consisting of autoequivalences which act trivially on $G(C)$ and $\mathrm{Auteq}_0^*(C) \subset \mathrm{Auteq}_0(C)$ be the subgroup of autoequivalences preserving the connected component $\Stab^{\dagger}(C)$. 
 
Let
 \begin{align*}
 &\mathrm{Aut}_{\mathrm{tri}}(C) =\{f \in \mathrm{Aut}(C)\mid f_* \mathrm{\ is\ trivial\ in\ }G(C)\},\\
 &\mathrm{Pic}^0_{\mathrm{tri}}(C) =\{\lL \in \mathrm{Pic}^0(C)\mid \lL|_{\Theta_i} \cong \oO_{\Theta_i} \mathrm{\ for\ all\ reduced\ curves\ }\Theta_i \}.
 \end{align*}
  Contrary to the case of K3 surfaces, we remark that any autoequivalences in $\mathrm{Pic}^0_{\mathrm{tri}}(C)\rtimes \mathrm{Aut}_{\mathrm{tri}}(C) $  have the trivial action on $\Stab^{\dagger}(C)$. 
	
The following is the main result of this paper.
\begin{thm}[Theorem \ref{thm:main1}]\label{thm:main}
The connected component $\Stab^{\dagger}(C)$ is mapped onto the open subset $\pP_0^{+}(C)\subset G(C)^{\vee}\otimes \bbC$, and the restricted forgetful map
\[
\pi:\Stab^{\dagger}(C) \rightarrow \pP_0^{+}(C)
\]
is a covering map.

Moreover, the deck transforms of $\pi$ is $\mathrm{Auteq}_0^*(C)/(\mathrm{Pic}^0_{\mathrm{tri}}(C)\rtimes \mathrm{Aut}_{\mathrm{tri}}(C))$.
\end{thm}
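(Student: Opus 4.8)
The plan is to adapt Bridgeland's covering-map argument for K3 surfaces to this curve setting, splitting the statement into the ``covering map'' part and the ``deck transformations'' part. First I would establish that $\pi(\Stab^{\dagger}(C)) \subseteq \pP_0^{+}(C)$: given any $\sigma = (Z,\pP) \in \Stab^{\dagger}(C)$, the semistable objects of a fixed phase have nonzero central charge, and by pushing forward to the elliptic surface $S$ and using the Riemann--Roch relation encoded in $\langle -,- \rangle$, one checks that a class $\delta$ with $\langle\delta,\delta\rangle=-2$ is the class of a (shift of a) spherical object, so $Z(\delta)\neq0$; the linear independence of $\Ree(Z\circ s)$ and $\Imm(Z\circ s)$ in $V_0^{\vee}$ follows from the support property / the fact that $Z$ restricted to the ``numerical'' part $V_0$ is an orientation-compatible isomorphism onto $\bbC$, exactly as in the smooth elliptic curve case. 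That $\pi$ lands in the $+$-component rather than its conjugate is then a connectedness argument: $\Stab^{\dagger}(C)$ is connected and contains the explicit slope-stability condition of Section~4 whose central charge lies in $\pP_0^{+}(C)$.

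Next I would prove that $\pi : \Stab^{\dagger}(C) \to \pP_0^{+}(C)$ is a local homeomorphism and has the path-lifting property, hence is a covering. Local homeomorphy is immediate from Bridgeland's deformation theorem: the forgetful map from the whole of $\Stab(C)$ to $G(C)^\vee\otimes\bbC$ is a local homeomorphism, and $\pP_0^{+}(C)$ is open. For path lifting one runs the standard argument: given a path $Z_t$ in $\pP_0^{+}(C)$ starting at $\pi(\sigma_0)$, lift it maximally to a path $\sigma_t$ in $\Stab(C)$ on a half-open interval $[0,t_0)$ and show the lift extends across $t_0$. The obstruction to extension is the appearance of an infinite family of semistable classes whose central charges accumulate, or a wall where a class becomes destabilized with $Z_{t_0}$ vanishing on it; the definition of $\pP_0^{+}(C)$ is engineered to exclude precisely this --- a limiting destabilizing class $\delta$ would have to satisfy $\langle\delta,\delta\rangle = -2$ with $Z_{t_0}(\delta)=0$, which is forbidden. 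One also needs surjectivity onto $\pP_0^{+}(C)$, which follows from path-lifting together with connectedness of $\pP_0^{+}(C)$ and the fact that the image is open and nonempty.

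For the deck-transformation computation I would argue as follows. The group $\mathrm{Auteq}_0^*(C)$ acts on $\Stab^{\dagger}(C)$ and, since its elements act trivially on $G(C)$, this action is fibre-preserving over $\pi$, giving a homomorphism $\mathrm{Auteq}_0^*(C) \to \mathrm{Deck}(\pi)$. As remarked in the excerpt, $\mathrm{Pic}^0_{\mathrm{tri}}(C)\rtimes\mathrm{Aut}_{\mathrm{tri}}(C)$ lies in the kernel (it acts trivially on all of $\Stab^{\dagger}(C)$), so the map factors through the quotient $\mathrm{Auteq}_0^*(C)/(\mathrm{Pic}^0_{\mathrm{tri}}(C)\rtimes\mathrm{Aut}_{\mathrm{tri}}(C))$. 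Injectivity of the induced map: an autoequivalence fixing some stability condition pointwise must fix the heart and all slopes, hence be of a very restricted form; using a classification of autoequivalences of $\dD^b(C)$ acting trivially on $G(C)$ (presumably established earlier, of the form ``spherical-type twists, line-bundle twists, automorphisms, shifts''), one shows that anything acting trivially on $\Stab^{\dagger}(C)$ must already lie in $\mathrm{Pic}^0_{\mathrm{tri}}(C)\rtimes\mathrm{Aut}_{\mathrm{tri}}(C)$. Surjectivity: a deck transformation $\Phi$ sends a chosen $\sigma_0$ (say the slope-stability condition, whose heart is $\Coh(C)$ up to tilt) to another stability condition with the same central charge; comparing hearts, $\Phi(\sigma_0)$ and $\sigma_0$ differ by an autoequivalence realizing the same central-charge data, and one lifts this to an element of $\mathrm{Auteq}_0^*(C)$ whose action agrees with $\Phi$ (a covering-space automorphism is determined by its value at one point).

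\medskip

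\noindent\emph{Main obstacle.} The hard part will be the surjectivity of $\mathrm{Auteq}_0^*(C)/(\mathrm{Pic}^0_{\mathrm{tri}}(C)\rtimes\mathrm{Aut}_{\mathrm{tri}}(C)) \to \mathrm{Deck}(\pi)$, i.e.\ realizing every deck transformation by an honest autoequivalence. This requires a sufficiently precise structural understanding of $\dD^b(C)$ for the reducible Kodaira curves --- in particular, knowing which hearts arise in $\Stab^{\dagger}(C)$ and that any two with the same central charge are related by an autoequivalence fixing $G(C)$ --- and this is where the reducibility (several components $C_i$, nodal or tacnodal intersections, the affine Dynkin combinatorics) makes the bookkeeping genuinely delicate compared to the irreducible case. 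The path-lifting closure argument is the second most delicate point, since one must rule out accumulation of semistable classes using only the $\langle\delta,\delta\rangle = -2$ condition.
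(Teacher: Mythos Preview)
Your overall architecture matches the paper's---image in $\pP_0^+(C)$, covering, then surjectivity and kernel of $\mathrm{Auteq}_0^*(C)\to\mathrm{Deck}(\pi)$---but the mechanisms you propose for two of these steps differ from the paper's and, as written, have gaps.

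\textbf{Covering.} The paper does not run a path-lifting argument. Instead, for each $Z\in\pP_0(C)$ it builds an explicit quadratic form $Q(v)=\langle v,v\rangle+\tfrac{2}{M^2}|Z(v)|^2$ (with $M=\inf_{\delta\in\Delta(C)}|Z(\delta)|>0$), checks that $Q$ is negative definite on $\ker Z$ and nonnegative on every $\sigma$-stable class (using the separately proved fact that stable objects satisfy $\langle E,E\rangle\in\{0,-2\}$), and then invokes the Bayer--Macr\`i--Stellari deformation theorem to get a covering over a neighbourhood of $Z$. Your path-lifting sketch would need exactly this ingredient---that destabilizing classes at the limit have square $-2$---so you are implicitly assuming what the quadratic-form argument supplies.

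\textbf{Image in $\pP_0^+(C)$ and surjectivity onto $\mathrm{Deck}(\pi)$.} Here is the real gap. Your claim that ``a class $\delta$ with $\langle\delta,\delta\rangle=-2$ is the class of a spherical object, so $Z(\delta)\neq 0$'' is not valid for an arbitrary $\sigma\in\Stab^\dagger(C)$: not every $(-2)$-class is represented by a $\sigma$-semistable object. The paper instead proves a \emph{chamber decomposition}
\[
\Stab^{\dagger}(C)=\bigcup_{\Phi\in\bB}\Phi(\overline{U(C)}),
\]
where $\bB$ is generated by the restricted spherical twists $T_{\oO_{\Theta_i}(k)}$ constructed in Section~3. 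This is obtained by analysing the codimension-one boundary of $U(C)$: at a general boundary point the skyscraper $\oO_x$ on some component $C_i$ acquires the Jordan--H\"older filtration $\oO_{\Theta_i}(k+1)\hookrightarrow\oO_x\twoheadrightarrow\oO_{\Theta_i}(k)[1]$, and $T_{\oO_{\Theta_i}(k)}$ reflects across that wall. Both the containment $\pi(\Stab^\dagger(C))\subset\pP_0^+(C)$ (check it on $\overline{V(C)}$, then transport by $\bB$) and the surjectivity of $\mathrm{Auteq}_0^*(C)\to\mathrm{Deck}(\pi)$ rest on this decomposition. For the latter: given $\sigma,\tau$ with the same central charge and $\sigma\in V(C)$, one first moves $\tau$ into $V(C)$ by some $\Phi\in\bB$; then an affine-root-system computation on the lattice $(G(C),\langle-,-\rangle)$ shows $\Phi$ acts on $G(C)$ as a permutation of the $[\oO_{\Theta_i}(-1)]$, hence as a graph automorphism of the Dynkin diagram, which can be absorbed into $\mathrm{Aut}(C)$. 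No prior ``classification of autoequivalences'' is invoked or available.

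\textbf{Kernel.} Your idea here is correct and matches the paper: if $\Phi$ fixes some $\sigma\in V(C)$ then $\Phi$ preserves $\Coh(C)$ and sends skyscrapers to skyscrapers, so by the standard Fourier--Mukai criterion $\Phi\cong f_*(-\otimes L)$ with $f\in\mathrm{Aut}_{\mathrm{tri}}(C)$ and $L\in\mathrm{Pic}^0_{\mathrm{tri}}(C)$.

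In short: the missing idea is the fundamental-domain picture $\Stab^\dagger(C)=\bigcup_{\Phi\in\bB}\Phi(\overline{U(C)})$ and its proof via the $(C_{i,k})$-type wall analysis. Without it, neither your image-containment step nor your deck-surjectivity step goes through.
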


Recall that it is important to observe the actions of spherical twists, defined in \cite{ST:01}, on stability manifolds of K3 surfaces \cite{Bri:08}. 
To prove Theorem \ref{thm:main}, we need the spherical twist $T_{\oO_{\Theta_i}(k)}$ associated with the sheaf $\oO_{\Theta_i}(k)$ on the irreducible component $C_i$. 
However, torsion-free sheaves supported on one irreducible component are not spherical objects. 
Indeed, if $C$ is a curve of type $(I_n)$, we can compute
\[\dim \Ext^q_{C}(\oO_{C_i}(k),\oO_{C_i}(k)) = 
\begin{cases}
        {0 \ (q <0 \text{ or }q \text{ is odd} )}\\
        {1\ (q=0)}\\
        {2 \ (q \text{ is even})}.
\end{cases}
\] 
Therefore, we define the restricted spherical twists. We fix a relatively minimal elliptic surface $f\colon S \rightarrow B$ over a quasi-projective curve $B$. We assume that the fiber $S_0$ of a fixed point $0 \in B$ is isomorphic to the singular curve $C$.

\begin{thm}[Theorem \ref{thm:twis}]
Let $G$ be a $(-2)$ curve contained in the fiber $S_0$. Then for any integer $k$, there is a sheaf $\pP_B$ on $S\times_BS$ such that the kernel $\pP$ of a spherical twist $T_{\oO_G(k)}$ is canonically isomorphic to the pushforward of $\pP_B$ along the inclusion $S\times_BS \rightarrow S\times S$.
In particular, the spherical twist $T_{\oO_G(k)}$  induces an autoequivalence $\Phi$ of $\dD^b(S_0)$, and the following diagram commutes
\[\begin{tikzcd}
	{\dD^b(S_0)} & {\dD^b(S_0)} \\
	\dD^b(S) & \dD^b(S),
	\arrow["{j_*}", from=1-1, to=2-1]
	\arrow["\Phi"', from=1-1, to=1-2]
	\arrow["{T_{\oO_G(k)}}", from=2-1, to=2-2]
	\arrow["{j_*}"', from=1-2, to=2-2]
\end{tikzcd}\]
\begin{comment}
\[\begin{tikzcd}
	{\dD^b(S_0)} \arrow[rightarrow]{r}[]{\Phi} \arrow[rightarrow]{d}[]{{j_*}} & {\dD^b(S_0)}\arrow[rightarrow]{d}[]{{j_*}} \\
	\dD^b(S) \arrow[rightarrow]{r}[]{{T_{\oO_G(k)}}}& \dD^b(S)
\end{tikzcd}\]

\[
\xymatrix{
	{\dD^b(S_0)} \ar[r] \ar[d]^{j_*} & {\dD^b(S_0)} \ar[d]^{j_*} \\
	{\dD^b(S)} \ar[r] &{\dD^b(S_0)} 
}
\]
\end{comment}
where $j$ is a closed immersion $S_0 \hookrightarrow S$.
\end{thm}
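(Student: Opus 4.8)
The plan is to write the Seidel–Thomas kernel of the twist down explicitly, to observe that it is (a shift of) a sheaf whose defining extension class descends along $\iota\colon S\times_B S\hookrightarrow S\times S$, and then to read off the statement about $\dD^b(S_0)$ by restricting to the fibre over $0$. Recall that $\pP\cong\mathrm{Cone}\bigl(\oO_G(k)^{\vee}\boxtimes\oO_G(k)\xrightarrow{\ \mathrm{ev}\ }\oO_{\Delta}\bigr)$ in $\dD^b(S\times S)$, where $\oO_G(k)^{\vee}=\RHom_S(\oO_G(k),\oO_S)$. Since $G\cong\bbP^1$ with $N_{G/S}\cong\oO_{\bbP^1}(-2)$, Grothendieck duality along $G\hookrightarrow S$ gives $\oO_G(k)^{\vee}\cong\oO_G(-k-2)[-1]$; and since $G\times S$ and $S\times G$ are Tor-independent in $S\times S$, we get $\oO_G(k)^{\vee}\boxtimes\oO_G(k)\cong\aA[-1]$, where $\aA:=\oO_G(-k-2)\boxtimes\oO_G(k)$ is a line bundle on $G\times G$ regarded as a sheaf on $S\times S$. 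Thus $\mathrm{ev}$ is a class $\phi\in\Hom_{S\times S}(\aA[-1],\oO_\Delta)=\Ext^1_{S\times S}(\aA,\oO_\Delta)$, the defining triangle rotates to $\oO_\Delta\to\pP\to\aA\xrightarrow{\phi[1]}\oO_\Delta[1]$, and the long exact sequence of cohomology sheaves shows $\mathcal{H}^i(\pP)=0$ for $i\ne0$, with $\pP$ the sheaf in the extension $0\to\oO_\Delta\to\pP\to\aA\to0$ classified by $\phi$. This already proves that $\pP$ is a sheaf.

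The heart of the proof is to show that $\phi$ descends — in fact that the whole group $\Ext^1_{S\times S}(\aA,\oO_\Delta)$ does. Because $G\subseteq S_0$, both $\Delta$ and $G\times G$ lie in $S\times_B S$, so $\oO_\Delta=\iota_*\oO_{\Delta'}$ and $\aA=\iota_*\aA'$ for the corresponding sheaves on $S\times_B S$, where $\Delta'\cong S$ is the relative diagonal. Since $\Delta_B\subseteq B\times B$ is a Cartier divisor and $S\times_B S=(f\times f)^{-1}(\Delta_B)$, the immersion $\iota$ is a regular closed immersion of codimension one; writing $N$ for its normal line bundle, the resolution $0\to\oO_{S\times S}(-S\times_B S)\to\oO_{S\times S}\to\iota_*\oO_{S\times_B S}\to0$ gives $\iota^!\iota_*\oO_{\Delta'}\cong\oO_{\Delta'}\oplus(\oO_{\Delta'}\otimes N)[-1]$, whence by $(\iota_*,\iota^!)$–adjunction
\[
\Ext^1_{S\times S}(\aA,\oO_\Delta)\ \cong\ \Ext^1_{S\times_B S}(\aA',\oO_{\Delta'})\ \oplus\ \Hom_{S\times_B S}(\aA',\oO_{\Delta'}\otimes N),
\]
with the image of $\iota_*$ equal to the first summand.

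The second summand vanishes: a nonzero morphism $\aA'\to\oO_{\Delta'}\otimes N$ would have image simultaneously a quotient of $\aA'$ — hence supported on $G\times G$ — and a nonzero subsheaf of the line bundle $\oO_{\Delta'}\otimes N$ on the integral surface $\Delta'\cong S$ — hence of two-dimensional support equal to $\Delta'$; but $(G\times G)\cap\Delta'\cong G$ is one-dimensional, a contradiction. Therefore $\phi=\iota_*\phi'$ for a unique $\phi'\in\Ext^1_{S\times_B S}(\aA',\oO_{\Delta'})$, and since $\iota_*$ is exact, the extension sheaf $\pP_B$ on $S\times_B S$ classified by $\phi'$ satisfies $\iota_*\pP_B\cong\pP$ canonically. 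This $\Hom$-vanishing — which is precisely where the hypothesis $G\subseteq S_0$ is used — is the one genuinely delicate point.

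Finally, restricting $\pP_B$ along the fibre inclusion $S_0\times S_0=(S\times_B S)\times_B\{0\}\hookrightarrow S\times_B S$ gives $\pP_0\in\dD^b(S_0\times S_0)$; set $\Phi:=\Phi_{\pP_0}$. That $j_*\circ\Phi\cong T_{\oO_G(k)}\circ j_*$ is the base-change compatibility of Fourier–Mukai transforms with the pushforward $\iota_*$ and with the inclusion $S_0\hookrightarrow S$, applied to the kernel $\pP=\iota_*\pP_B$ composed with the graph of $j$. That $\Phi$ is an autoequivalence follows by running the descent of the previous paragraphs also for the inverse twist functor $T_{\oO_G(k)}^{-1}$, whose kernel admits the analogous description and so is again of the form $\iota_*\pP_B'$, and then observing that the identities $\pP\star\pP'\cong\oO_{\Delta_S}\cong\pP'\star\pP$ on $S\times S$ descend to $\pP_B,\pP_B'$ over $B$ and restrict to $\pP_0\star\pP_0'\cong\oO_{\Delta_{S_0}}\cong\pP_0'\star\pP_0$. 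I expect this last step to require the most care — one must set up relative convolution over $B$ and check enough Tor-independence to keep the base changes underived — but to contain no serious obstacle; the real crux remains the vanishing in the previous paragraph.
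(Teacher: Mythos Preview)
Your argument is correct and reaches the same conclusion as the paper, but the route to the key descent step is genuinely different. The paper constructs the morphism on $S\times_B S$ by hand: it factors the evaluation map through the diagonal restriction ${i_{GG}}_*\eta$ followed by $\Delta_{S*}f$, where $f\in\Hom_S(i_*\oO_G(-2)[-1],\oO_S)\cong\bbC$ is the unique nonzero map, and then checks that the pushforward to $S\times S$ agrees with the Seidel--Thomas evaluation by verifying that the relevant $\Hom$-groups are one-dimensional. You instead argue abstractly that the pushforward $\iota_*\colon\Ext^1_{S\times_B S}(\aA',\oO_{\Delta'})\to\Ext^1_{S\times S}(\aA,\oO_\Delta)$ is an isomorphism, using the triangle for $\iota^!\iota_*$ and a clean support argument to kill the obstruction term $\Hom_{S\times_B S}(\aA',\oO_{\Delta'}\otimes N)$. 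Your method is more conceptual and has the bonus of exhibiting $\pP$ and $\pP_B$ explicitly as sheaves (the paper's Theorem~\ref{thm:twis} as stated in the body only produces an object of $\dD^b(S\times_B S)$, with the sheaf statement relegated to a remark in the contraction picture).

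Two small comments. First, the direct-sum splitting $\iota^!\iota_*\oO_{\Delta'}\cong\oO_{\Delta'}\oplus(\oO_{\Delta'}\otimes N)[-1]$ is not justified and probably need not hold on the singular variety $S\times_B S$; but you do not need it, since the long exact sequence obtained from the triangle $\oO_{\Delta'}\to\iota^!\iota_*\oO_{\Delta'}\to\oO_{\Delta'}\otimes N[-1]$ already shows that $\iota_*$ is injective with cokernel embedding in $\Hom(\aA',\oO_{\Delta'}\otimes N)$, which suffices once that $\Hom$ vanishes. Second, for the final step you propose to descend the inverse kernel and the convolution identities by hand; this can be made to work, but the paper avoids all of it by invoking the general principle (\cite[Proposition~2.15]{HLS:09}, \cite[Lemma~2.5]{Ueh:15}) that a relative Fourier--Mukai kernel over $B$ which is an equivalence on the total space restricts to an equivalence on every fibre and intertwines the pushforwards. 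Citing this result would let you delete your last paragraph entirely.
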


\subsection{Relation to existing work}

There are several works relating Bridgeland stability conditions and stability manifolds on algebraic varieties. 
In \cite{Bri:07}, Bridgeland showed that these spaces of elliptic curves are isomorphic to $\widetilde{\GL}^{+}(2,\bbR)$. For the curves of genus greater than one, these spaces are known to be isomorphic to $\widetilde{\GL}^{+}(2,\bbR)$ \cite{Mac:07}.
For the case of the projective line $\bbP^1$, the situation is different since there is a t-structure whose heart is called Beilinson’s \textit{Kronecker heart} $\bB$ in $\dD^b(\bbP^1)$ \cite{Bei:78}.
On the other hand, the stability manifold $\Stab(\bbP^1)$ is known to be isomorphic to $\bbC^2$ in \cite{Oka:04}. In particular, the spaces of stability conditions on non-singular curves are always simply connected.
Moreover, the stability manifolds of the irreducible curves of arithmetic genus one are known to be isomorphic to $\widetilde{\GL}^{+}(2,\bbR)$ as in the case of smooth elliptic curves.

However, stability conditions on reducible singular curves have not been studied enough.
The stability manifolds of these curves turn out to be more complicated than the smooth curves.
It is worth pointing out that there are many similarities between the case of K3 surfaces and Kodaira curves, such as Theorem \ref{thm:main}.
As in the case of K3 surfaces, we expect the following conjecture:
\begin{conj}
The action of $\mathrm{Auteq}(C)$ on $\Stab(C)$ preserves the connected component $\Stab^{\dagger}(C)$. Moreover $\Stab^{\dagger}(C)$ is simply connected. Thus there is an isomorphism
\[
\mathrm{Auteq}^0(C)/(\mathrm{Pic}^0_{\mathrm{tri}}(C) \rtimes \mathrm{Aut}_{\mathrm{tri}}(C))\cong \pi_1 \pP_0^+(C).
\]
\end{conj}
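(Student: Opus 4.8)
The plan is to deduce all three assertions from Theorem~\ref{thm:main} together with two additional inputs: the invariance of $\Stab^{\dagger}(C)$ under the full autoequivalence group, and the simple connectivity of $\Stab^{\dagger}(C)$. Granting these, the final isomorphism is formal. Indeed, once $\Stab^{\dagger}(C)$ is simply connected, the covering map $\pi\colon \Stab^{\dagger}(C)\to \pP_0^{+}(C)$ of Theorem~\ref{thm:main} is the universal covering, so its group of deck transformations is canonically isomorphic to $\pi_1\pP_0^{+}(C)$. On the other hand, the invariance statement upgrades the deck group computed in Theorem~\ref{thm:main}: if every autoequivalence preserves $\Stab^{\dagger}(C)$, then $\mathrm{Auteq}_0^{*}(C)=\mathrm{Auteq}_0(C)$, and the deck group becomes $\mathrm{Auteq}_0(C)/(\mathrm{Pic}^0_{\mathrm{tri}}(C)\rtimes \mathrm{Aut}_{\mathrm{tri}}(C))$. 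Combining the two identifications yields the asserted isomorphism (interpreting the group $\mathrm{Auteq}^0(C)$ in the statement as $\mathrm{Auteq}_0(C)$).

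For the invariance of $\Stab^{\dagger}(C)$, I would first fix a generating set of $\mathrm{Auteq}(C)$ consisting of shifts, the tensor action of $\mathrm{Pic}(C)$, the pushforward action of $\mathrm{Aut}(C)$, and the restricted spherical twists $T_{\oO_{\Theta_i}(k)}$ produced by Theorem~\ref{thm:twis}; the internal $\widetilde{\GL}^{+}(2,\bbR)$-action preserves connected components tautologically. An autoequivalence acting trivially on $G(C)$ fixes every central charge, hence preserves each connected component of $\pP_0(C)$; together with the remark that elements of $\mathrm{Pic}^0_{\mathrm{tri}}(C)\rtimes \mathrm{Aut}_{\mathrm{tri}}(C)$ act trivially on $\Stab^{\dagger}(C)$, this handles the subgroup $\mathrm{Auteq}_0(C)$. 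For a general autoequivalence $\Phi$ I would track a single geometric stability condition in $\Stab^{\dagger}(C)$ (one for which all skyscrapers $\oO_x$ are stable of a common phase) and verify that its image is again of this geometric type with central charge in $\pP_0^{+}(C)$, using that $\Phi_{*}$ preserves the pairing $\langle-,-\rangle$ and the set of $(-2)$-classes, and hence distinguishes $\pP_0^{+}(C)$ from $\pP_0^{-}(C)$. The delicate point here is completeness of the generating set: one needs a structure theorem for $\mathrm{Auteq}(C)$ for reducible Kodaira curves, which I would attempt to obtain by lifting to $\dD^b(S)$ through $j_{*}$ and exploiting the compatibility square of Theorem~\ref{thm:twis}.

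For the simple connectivity --- the heart of the conjecture --- I would proceed in two stages. First, compute $\pi_1\pP_0^{+}(C)$ from the explicit description of the base. Under the pairing $\langle-,-\rangle$ the classes $\oO_{\Theta_i}$ span an affine root lattice of the $\widetilde{A}$--$\widetilde{D}$--$\widetilde{E}$ type dictated by the Kodaira type of $C$, the radical of $\langle-,-\rangle$ being spanned by the class of the fibre; the $(-2)$-classes $\delta$ are precisely the real roots. The two defining conditions then present $\pP_0^{+}(C)$ as a $\GL^{+}(2,\bbR)$-bundle over the complexified complement of the real-root hyperplane arrangement $\{Z(\delta)=0\}$, so that $\pi_1\pP_0^{+}(C)$ is the generalized (affine) braid group of that type. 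Second, I would show that the deck group is generated by the classes of the spherical twists $T_{\oO_{\Theta_i}(k)}$ and that these satisfy exactly the affine braid relations, producing a surjection from the affine braid group onto the deck group; matching this with the covering correspondence forces the covering to be universal, i.e. $\Stab^{\dagger}(C)$ to be simply connected. An alternative, more direct route is to build a contractible fundamental domain for the deck action out of the algebraic stability conditions obtained by iterated tilts at the simple objects $\oO_{\Theta_i}$, and to glue these chambers into a cell structure on $\Stab^{\dagger}(C)$ whose realization is manifestly simply connected.

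I expect the simple connectivity to be the main obstacle, exactly as in the K3 analogue where it remains open. The genuine difficulty is not the homotopy type of the base $\pP_0^{+}(C)$, which is governed by the root-hyperplane arrangement, but ruling out extra monodromy in the total space: one must show that every loop in $\pP_0^{+}(C)$ lifts to a path whose endpoints differ by an element of the spherical-twist subgroup already accounted for, with no further identifications. Controlling the behaviour of stability conditions near the walls where $Z(\delta)\to 0$ --- equivalently, showing that wall-crossing in $\Stab^{\dagger}(C)$ is generated by the restricted spherical twists and incurs no additional relations --- is the delicate analytic input that would complete the argument.
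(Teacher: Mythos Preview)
The statement you are attempting to prove is labelled a \emph{Conjecture} in the paper, not a theorem; the paper offers no proof and explicitly models it on the still-open K3 analogue. There is therefore no paper proof to compare against.

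Your proposal correctly isolates the logical structure: the displayed isomorphism is a formal consequence of the first two clauses together with Theorem~\ref{thm:main}, and you spell this deduction out cleanly. But the two clauses themselves are precisely the conjectural content, and your treatment of them is a sketch of a strategy rather than a proof. For invariance you presuppose a generating set for $\mathrm{Auteq}(C)$ and then concede that ``completeness of the generating set'' requires a structure theorem for $\mathrm{Auteq}(C)$ that is not available; the suggestion to lift through $j_*$ to $\dD^b(S)$ does not obviously help, since not every autoequivalence of $\dD^b(C)$ need arise by restriction from one of $\dD^b(S)$. For simple connectivity you outline the expected identification of $\pi_1\pP_0^+(C)$ with an affine braid group and propose to match it with the spherical-twist subgroup, but you yourself flag this as ``the main obstacle'' and note the K3 case remains open. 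Both the braid-relation verification for the restricted twists $T_{\oO_{\Theta_i}(k)}$ on the singular fibre and the ``no extra monodromy'' step are genuine unproven inputs.

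In short: your reduction of the third assertion to the first two is correct and matches how the paper frames the conjecture, but the remainder is a research programme, not a proof, and the paper does not claim otherwise.
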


\subsection{Future directions}
We expect that the results of this paper provide clues to investigating the families of stability conditions defined in \cite{BL:21}.
Let $X \rightarrow S$ be a flat family of projective varieties over some base scheme $S$.
A stability condition on $\dD^b(X)$ over $S$ is a collection of stability conditions on each fiber $X_s$ with certain conditions.
As in the case of the usual stability conditions, there is a natural topology on the set of the families of stability conditions $\Stab(X/S)$ and its space is a complex manifold \cite[Theorem 1.2.]{BL:21}.

In the case when $X \rightarrow S$ is a degeneration of elliptic curves, we will study topological properties of the natural map $\Stab(X/S)\rightarrow \Stab(X_0)$ and relations among the stability conditions of the singular fiber and smooth fibers.

\subsection*{Acknowledgement}
I am grateful to my supervisor Yukinobu Toda for introducing this problem to me, and for helpful discussions. I also thank Tasuki Kinjo for pointing out various errors in the manuscript of this paper.
I was supported by WINGS-FMSP
program at the Graduate School of Mathematical Science, the University of Tokyo.
The content of Section 3 of this paper is partly the same as that of Hayato Arai's later paper\cite{Ari:23}, but it is an independent work.

\section{Preliminaries}

\subsection{Stability conditions}
We review the notion of Bridgeland stability conditions \cite{Bri:07,KS:08} and the deformation property. First, we define stability functions on abelian categories.

Let $\aA$ be an abelian category and $K(\aA)$ be its Grothendieck group.

\begin{defi}
\begin{enumerate}
\item 
A stability function on $\aA$ is a group homomorphism $Z : K(\aA) \rightarrow \bbC$ such that for all $0\not=E \in \aA$
\[
Z(E) \in \bbH \cup \bbR_{<0}.
\]
Here, $\bbH$ is the upper half plane.

\item
Let $Z$ be a stability function on $\aA$. The phase of an object $0 \not = E \in \aA$ is defined to be \[\phi(E) = (1/\pi) \arg Z(E) \in \left( 0,1\right].\]

\item
An object $0 \not = E \in \aA$ is said to be semistable (resp. stable) if every subobject $0 \not = A \subset E$ satisfies $\phi(A) \leq(resp. <) \phi(E)$.
\item $Z$ satisfies the Harder-Narashimhan property (HN property) if for every object $E\in\aA$ there is a filtration
\[
0 = E_0 \subset E_1 \subset \cdots \subset E_{n-1} \subset E_n
\]
whose factors $F_j = E_j / E_{j-1}$ are semistable objects of $\aA$ with
\[
\phi(F_1) >\phi(F_2) > \cdots> \phi(F_n).
\]
\end{enumerate}
\end{defi}
Let $\dD$ be a triangulated category. We define a pre-stability condition and the support property. We fix a finite free abelian group $\Lambda$ with a group homomorphism $v:K(D) \rightarrow \Lambda$. We note that if $\aA$ is the heart of a bounded t-structure on $\dD$ then $K(\aA)$ is isomorphic to $K(D)$.

\begin{defi}
\begin{enumerate}
\item 
A pre-stability condition $\sigma = (Z, \aA)$ on $\dD$ with respect to $\Lambda$ consists of the heart of a bounded t-structure $\aA$ and a stability function $Z$ on $\aA$ which factors through $v:K(\aA) \cong K(\dD) \rightarrow \Lambda$ and satisfies the HN-property. 
\item
Let $\lVert-\rVert$ be a norm on $\Lambda_{\bbR} = \Lambda\otimes \bbR$. A pre-stability condition $\sigma = (Z, \aA)$ on $\dD$ with respect to $\Lambda$ satisfies the support property if the following holds:
\[
\sup \left\{  \frac{\lVert v(E)\rVert}{\lvert Z(E) \rvert}\mid E \rm{\ is\ semistable\ in\ }\aA \right\} <\infty.
\]
\item 
A stability condition is a pre-stability condition satisfying the support property.

\end{enumerate}
\end{defi}

\begin{lem}[{\cite[Lemma A.4.]{BMS:16}}]
A pre-stability condition $\sigma = (Z,\aA)$ with respect to $\Lambda$ satisfies the support condition if and only if there exists a quadratic form $Q$ on the vector space $\Lambda_{\bbR}$ such that
\begin{itemize}
\item for any semistable objects $E \in \aA$
\[
Q(v(E)) \geq 0.
\]
\item $Q$ is negative definite on $\Ker Z \subset \Lambda_{\bbR}$.
\end{itemize}
\end{lem}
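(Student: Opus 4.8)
The plan is to prove the two implications separately; in both directions the quadratic form $Q$ should be viewed as a pseudo-norm witnessing that the classes $v(E)$ of semistable objects lie in a cone that is transverse to $\Ker Z$. At the outset I would record the elementary fact that every nonzero $E \in \aA$ has $Z(E) \neq 0$, since $Z(E) \in \bbH \cup \bbR_{<0}$; this makes all the ratios appearing below legitimate.

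For the direction asserting that a form $Q$ with the two listed properties forces the support property, I would argue by contradiction and compactness. If the supremum defining the support property were infinite, there would exist semistable objects $E_n \in \aA$ with $\lVert v(E_n)\rVert/\lvert Z(E_n)\rvert \to \infty$. Normalising, set $w_n = v(E_n)/\lVert v(E_n)\rVert$, so that $\lVert w_n\rVert = 1$, and pass to a subsequence converging to some $w$ with $\lVert w\rVert = 1$. On one hand $\lvert Z(w_n)\rvert = \lvert Z(E_n)\rvert/\lVert v(E_n)\rVert \to 0$, so $w \in \Ker Z$ and, since $w \neq 0$, negative definiteness gives $Q(w) < 0$. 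On the other hand $Q(w_n) = Q(v(E_n))/\lVert v(E_n)\rVert^{2} \geq 0$ because $E_n$ is semistable and $Q$ is homogeneous of degree $2$; letting $n \to \infty$ and using continuity of $Q$ yields $Q(w) \geq 0$, a contradiction.

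For the converse I would construct $Q$ explicitly. First replace $\lVert-\rVert$ by a Euclidean norm $\lVert-\rVert_2$ coming from an inner product on $\Lambda_\bbR$; since all norms on the finite-dimensional space $\Lambda_\bbR$ are equivalent, the support property persists, say $\lVert v(E)\rVert_2 \leq C\,\lvert Z(E)\rvert$ for every semistable $E$. The point of this reduction is that $v \mapsto \lVert v\rVert_2^{2}$ is then a genuine quadratic form, whereas $\lVert\cdot\rVert^{2}$ need not be. Writing $Z = \Ree Z + \sqrt{-1}\,\Imm Z$ with $\Ree Z, \Imm Z \in \Lambda_\bbR^{\vee}$, the expression $\lvert Z(v)\rvert^{2} = (\Ree Z(v))^{2} + (\Imm Z(v))^{2}$ is a positive semidefinite quadratic form whose kernel is exactly $\Ker Z$. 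Then I would take the quadratic form
\[ Q(v) \;=\; \lvert Z(v)\rvert^{2} \;-\; \tfrac{1}{2C^{2}}\,\lVert v\rVert_2^{2}. \]
For semistable $E$ one gets $Q(v(E)) \geq \lvert Z(E)\rvert^{2} - \tfrac{1}{2}\lvert Z(E)\rvert^{2} = \tfrac{1}{2}\lvert Z(E)\rvert^{2} \geq 0$, and for $0 \neq v \in \Ker Z$ one gets $Q(v) = -\tfrac{1}{2C^{2}}\lVert v\rVert_2^{2} < 0$, so both required properties hold.

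The lemma is essentially formal, so I do not expect a genuine obstacle. The two points that need care are: (i) in the converse direction, producing an \emph{honest} quadratic form, which is precisely why one passes to a Euclidean norm before forming $Q$; and (ii) in the forward direction, handling the limit class $w$, which need not equal $v(E)$ for any semistable $E$ — here one only invokes continuity and the degree-$2$ homogeneity of $Q$, so no information about semistability of the limit is required.
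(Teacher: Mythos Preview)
Your argument is correct and complete; both directions are handled cleanly, and the two delicate points you flag (passing to a Euclidean norm so that $\lVert-\rVert_2^{2}$ is genuinely quadratic, and using only continuity and homogeneity of $Q$ at the limit $w$) are exactly the ones that matter.

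Note, however, that the paper does not supply its own proof of this lemma: it is stated with the citation \cite[Lemma~A.4]{BMS:16} and used as a black box. Your write-up is essentially the standard proof one finds in that reference, so there is nothing to contrast.
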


\begin{defi}
A slicing $\pP =\{\pP(\phi)\}_{\phi \in \bbR}$ of $\dD$ consists of full addictive subcategories $\pP(\phi) \subset \dD$ for each $\phi$, satisfying:
\begin{enumerate}
\item for all $\phi \in \bbR$, $\pP(\phi+1) = \pP(\phi)[1]$.
\item if $\phi_1 > \phi_2$ and $E_j \in \pP(\phi_j)$, then $\Hom(E_1,E_2) = 0$.
\item for every $E \in \dD$ there exists a finite sequence 
\[
0 = E_0 \rightarrow E_1 \rightarrow \cdots E_m =E
\]
such that the cone $\mathrm{Cone}(E_i \rightarrow E_{i+1})$ lies in $\pP(\phi_i)$ with $\phi_0 >\phi_1 >\cdots > \phi_{m-1}$.
 
\end{enumerate}

\end{defi}
Let $\sigma =(Z,\aA)$ be a stability condition on $\dD$. For each real number $\phi \in (0,1]$, full addictive category $\pP(\phi)$ is defined as follows:
\[
\pP(\phi) :=\{E \in \aA \mid E \text{ is semistable of the phase } \phi  \}.
\]

For each real number $\phi \in \bbR$,
\[
\pP(\phi) = \pP (\phi-k ) [k]
\]where $k = \lfloor \phi \rfloor$. 
 It follows from the support property that each category $\pP(\phi)$ is of finite length, so each semistable object has a Jordan-Holder filtration.
For a non-zero object $E$, we write $\phi_{\sigma}^{+} (E) = \phi_1$ and $\phi_{\sigma} ^{-}(E) = \phi_m$. The mass of $E$ is defined by $m(E) = \sum |Z(A_i)|$.
Then, the set $\Stab_{\Lambda}(\dD)$ of stability conditions with respect to $\Lambda$ on $\dD$ has a natural topology induced by the following metric
\[
d(\sigma_1,\sigma_2) = \sup_{0\not = E \in \dD} \left\{ \lvert\phi_{\sigma_1}^{-}(E) - \phi_{\sigma_2}^{-}(E)\rvert,\lvert\phi_{\sigma_1}^{+}(E) - \phi_{\sigma_2}^{+}(E)\rvert,\lvert\log \frac{m_{\sigma_1}(E)}{m_{\sigma_2}(E)}\rvert \right\}.
\]
This topology makes the forgetful map continuous
\[
\begin{array}{ccc}
\pi:\Stab(\dD)&\rightarrow& \Hom(\Lambda,\bbC)\\
\vin &&\vin\\
(Z,\aA)&\mapsto&Z.
\end{array}
\]
It follows from this result called the deformation property of stability conditions that the forgetful map $\pi$ is a local homeomorphism.

\begin{thm}[{\cite[Lemma A.5.]{BMS:16}},{\cite[Theorem 1.2.]{Bay:19}}]\label{thm:deformation}
Let $Q$ be a quadratic form on $\Lambda \otimes\bbR$ and assume that the stability condition $\sigma = (Z ,\pP)$ satisfies the support property with respect to $Q$. 
Consider the open set of $\Hom(\Lambda,\bbC)$ consisting of central charges whose kernel is negative-definite with respect to $Q$, and let $U$ be the connected component containing $Z$.
Then:
\begin{enumerate}
\item There is an open neighborhood $\sigma \in U_{\sigma} \subset \Stab_{\Lambda}(\dD)$ such that the restriction $\zZ : U_{\sigma} \rightarrow U$ is a covering map.
\item All stability conditions in $U_{\sigma}$ satisfy the support property with respect to $Q$.
\end{enumerate}
\end{thm}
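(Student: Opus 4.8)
\emph{Proof proposal.}

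The plan is to upgrade Bridgeland's local deformation construction to a covering of the whole connected component $U$, using the fixed quadratic form $Q$ to make the size of every deformation step uniform. The basic input, which is already what makes $\pi$ a local homeomorphism, is Bridgeland's construction from \cite{Bri:07}: given $\sigma = (Z, \pP)$ and a central charge $W$ close to $Z$, one produces a new slicing by redistributing the $\sigma$-semistable objects into phases determined by $W$, and this yields a genuine stability condition $\tau = (W, \pP_\tau)$ provided $\|W - Z\|$ is smaller than a radius controlled by how far phases are allowed to spread. The content of the theorem is that this radius can be bounded below uniformly along $U$ and that the support property with the same $Q$ is preserved, so that the purely local statement globalises to a covering.

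The heart of the matter, and the step I expect to be the main obstacle, is a convexity property of the cone $\{v : Q(v) \ge 0\}$ relative to the central charge, together with its uniformity. Since $Q$ is negative definite on $\Ker Z$, the cone $\{Q \ge 0\}$ meets $\Ker Z$ only at the origin; from this one deduces that if classes $v_1, \dots, v_k$ satisfy $Q(v_i) \ge 0$ and their central charges $Z(v_i)$ lie in a sufficiently narrow sector of $\bbC$, then $Q\bigl(\sum_i v_i\bigr) \ge 0$ as well. The permitted width of the sector, and the support constant $C$ in $\|v(E)\| \le C\,|Z(E)|$ furnished by the criterion \cite[Lemma A.4]{BMS:16}, can both be bounded uniformly in terms of $Q$ and the distance from the central charge to the boundary $\partial U$ where $Q|_{\Ker W}$ degenerates; since this distance is continuous and bounded below on any compact subset of $U$, a single such constant works along any compact path. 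Making this interaction between the fixed form $Q$ and the moving central charge quantitative is the delicate point.

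Granting the convexity property, preservation of the support property under deformation is immediate: any $\tau$-semistable object $E$, for the deformed slicing with central charge $W$, has $\sigma$-HN factors whose $Z$-phases lie in a window that shrinks with $\|W - Z\|$, so once $W$ is within the uniform radius $\varepsilon = \varepsilon(Q, U) > 0$ the convexity property gives $Q(v(E)) \ge 0$. Hence $\tau$ again satisfies the support property with respect to the \emph{same} $Q$, which is exactly statement (2) for the deformations we construct. Crucially, $\varepsilon$ does not shrink as we move within a compact part of $U$.

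Finally, the covering statement (1) follows by path lifting. Fix a path $\gamma \colon [0,1] \to U$ with $\gamma(0) = Z$ and lift it by iterating the local deformation; because the uniform radius $\varepsilon$ never collapses along the compact image $\gamma([0,1])$, the lift extends over all of $[0,1]$, and uniqueness of the lift follows from the local homeomorphism. Let $U_\sigma$ be the union over all such paths of their lifted endpoints; it is an open neighbourhood of $\sigma$ with $\pi(U_\sigma) = U$, and every point of $U_\sigma$ satisfies support with respect to $Q$. Unique path lifting, promoted to homotopy lifting by the same uniform-radius argument applied cell by cell to a homotopy of paths, shows that $\zZ = \pi|_{U_\sigma} \colon U_\sigma \to U$ is a local homeomorphism with the unique path- and homotopy-lifting properties onto the locally contractible base $U$; the standard topological criterion then identifies $\zZ$ as a covering map, completing the proof.
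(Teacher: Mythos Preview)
The paper does not contain a proof of this theorem at all: it is quoted from \cite[Lemma A.5]{BMS:16} and \cite[Theorem 1.2]{Bay:19} and used as a black box (in the proof of Proposition~\ref{prop:cov} and in Section~6). So there is no ``paper's own proof'' to compare your proposal against.

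That said, your sketch is a fair outline of the argument in the cited references. The two ingredients you isolate---the linear-algebra fact that classes in the non-negative cone of $Q$ whose central charges lie in a half-plane again have non-negative $Q$ (this is exactly \cite[Lemma~A.7]{BMS:16}, slightly stronger than the ``narrow sector'' version you state), and the observation that the resulting deformation radius depends only on $Q$ and the negative-definiteness of $Q|_{\Ker W}$, hence is uniform over $U$---are precisely what Bayer and Bayer--Macr\`i--Stellari use. One small correction: the uniformity does not require a compactness argument along paths as you suggest; because the support property is formulated with the \emph{fixed} form $Q$, the deformation radius is genuinely uniform over all of $U$, not just over compacta, and this is what makes the covering statement clean. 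Otherwise your plan matches the literature.
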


\section{Spherical twists along line bundles on $(-2)$ curves}

First, we recall the notion of spherical objects and spherical twists.

\subsection{Spherical twists}

Let $X$ be an $n$-dimensional smooth quasi-projective variety.

\begin{defi}
An object $\eE \in \dD ^b(X)$ with compact support is called spherical if 
\begin{enumerate}
\item $\eE \otimes \omega_X \cong \eE$.
\item $\bbR \Hom(\eE,\eE) \cong H^*(S^{n},\bbC)$.
\end{enumerate}
where $S^{n} $ is the real sphere of dimension $n = \dim(X)$.

\end{defi}

Let $\eE$ be a spherical object and consider the mapping cone
\[
\pP _{\eE} = \mathrm{Cone} \left(\pi_1^*\eE^{\vee}\otimes \pi_2^* \eE \rightarrow \oO_{\Delta} \right).
\]
Here, $\oO_{\Delta}$ is the structure sheaf of the diagonal $\Delta \subset X\times X$ and $\pi_i$ is the $i$-th projection $\pi_i \colon  X\times X \rightarrow X$. The homomorphism $ \pi_1^*\eE^{\vee}\otimes \pi_2^* \eE \rightarrow \oO_{\Delta}$ is an evaluation map given as the composition of the restriction
\begin{align}
\pi_1^*\eE^{\vee}\otimes \pi_2^* \eE \longrightarrow \iota_* \iota^* (\pi_1^*\eE^{\vee}\otimes \pi_2^* \eE) = \iota_*(\eE^{\vee}\otimes \eE), \label{def:unit}
\end{align}
 where $\iota \colon X \rightarrow \Delta \subset X\times X$, and the trace map $\iota _* \mathrm{tr}$
\begin{align}
\iota _* \rm{tr}\colon \iota_*(\eE^{\vee}\otimes \eE) \longrightarrow \iota_*(\oO_X) = \oO_{\Delta}.\label{def:trace}
\end{align}

\begin{prop-defi}[\cite{ST:01}]
Let $\eE$ be a spherical object in $\dD^b(X)$. The following functor 
\[
T_{\eE}\coloneqq \Phi_{\pP_{\eE}} \colon \dD^b(X) \rightarrow \dD^b(X)
\]
is an autoequivalence, called the spherical twist.
\end{prop-defi}

\subsection{Spherical twists along line bundles on $(-2)$ curves}

Let $S \rightarrow B$ be a relatively minimal smooth elliptic surface such that $B$ is a quasi-projective curve. We consider a $(-2)$ curve $G$ contained in a fiber $S_0$. 
We will claim that the spherical twist along $\oO_G(k)$ induces an autoequivalence of the derived category of each fiber. To show this statement, we use the following.

\begin{thm}[{cf.\cite[Proposition 2.15.]{HLS:09},\cite[Lemma 2.5.]{Ueh:15}}]
Let $\pi\colon X \rightarrow C$ and $\pi^{\prime}\colon Y \rightarrow C$ be flat projective morphisms between smooth varieties, and take a point $c\in C$. Suppose $\pP$ is an object in $\dD^b(X\times_C Y)$. Consider two functors $\Phi = \Phi_{\pP}\colon \dD^b(X) \rightarrow \dD^b(Y)$ and $\Psi = \Phi_{\pP|_{X_c\times Y_c}} \colon \dD^b(X_c) \rightarrow \dD^b(Y_c)$. Then we have:
\begin{itemize}
\item These functors satisfy ${i_{Y}}_*\circ \Phi\cong \Psi \circ {i_X}_*$. Here $i_{X}$ and $i_Y$ denote the inclusions from the fibers $X_c,Y_c$.
\item If $\Phi$ is an equivalence, then $\Psi$ is also an equivalence.
\end{itemize}
\end{thm}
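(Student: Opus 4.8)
To establish both assertions I would work entirely with the Fourier--Mukai description of the functors and reduce everything to base change along the fibre over $c$. Write $p\colon X\times_C Y\to X$ and $q\colon X\times_C Y\to Y$ for the two projections, $p_c,q_c$ for the analogous projections from the fibre $X_c\times Y_c$, and $j\colon X_c\times Y_c\hookrightarrow X\times_C Y$ for the inclusion, so that both squares in
\[\begin{tikzcd}
X_c \arrow[d,"i_X"'] & X_c\times Y_c \arrow[l,"p_c"'] \arrow[r,"q_c"] \arrow[d,"j"'] & Y_c \arrow[d,"i_Y"] \\
X & X\times_C Y \arrow[l,"p"] \arrow[r,"q"'] & Y
\end{tikzcd}\]
are cartesian. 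The first point I would record is a package of flatness facts: since $\pi'\colon Y\to C$ is flat, its base change $p$ along $\pi$ is flat, and hence $X\times_C Y\to C$ is flat; moreover $\{c\}\hookrightarrow C$ is a regular embedding because $C$ is smooth. Consequently every restriction to the fibre over $c$ is a Tor-independent base change, and $j$ is a regular embedding.

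To establish the first bullet, i.e. the natural isomorphism $\Phi\circ i_{X*}\cong i_{Y*}\circ\Psi$ of functors $\dD^b(X_c)\to\dD^b(Y)$, I would compute directly. For $F\in\dD^b(X_c)$,
\[
\Phi(i_{X*}F)\;=\;Rq_*\bigl(Lp^*\,i_{X*}F\otimes^{L}\pP\bigr).
\]
Flat base change along the left square (using that $p$ is flat) gives $Lp^*i_{X*}F\cong j_*\,Lp_c^*F$, and the projection formula for the closed immersion $j$ yields $j_*Lp_c^*F\otimes^{L}\pP\cong j_*\bigl(Lp_c^*F\otimes^{L}Lj^*\pP\bigr)$, where $Lj^*\pP=\pP|_{X_c\times Y_c}$ is exactly the kernel of $\Psi$. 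Finally the identity $q\circ j=i_Y\circ q_c$ turns $Rq_*\,j_*$ into $i_{Y*}\,Rq_{c*}$, and collecting the terms produces $i_{Y*}\Psi(F)$, which is the claim. This part is a purely formal manipulation and should present no difficulty.

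For the second bullet I would pass to the level of kernels. Because $\pi'$ is projective, $q$ is proper and $\Phi=\Phi_\pP$ admits a right adjoint $\Phi^R=\Phi_{\pP_R}$; relative Grothendieck duality produces $\pP_R$ from $\pP$ by dualizing and twisting with the relative dualizing complex, operations that keep $\pP_R$ supported on a fibre product over $C$. That $\Phi$ is an equivalence is equivalent to the unit $\oO_{\Delta_X}\to \pP_R\circ\pP$ and the counit $\pP\circ\pP_R\to\oO_{\Delta_Y}$ being isomorphisms of kernels, where $\circ$ denotes convolution over $C$. I would then restrict these two morphisms along the fibre inclusion over $c$: flatness of all the relevant fibre products over $C$ lets one identify the restriction of a convolution with the convolution of the restrictions, and $\oO_{\Delta_X}$, being the pushforward of $\oO_X$ flat over $C$, restricts to $\oO_{\Delta_{X_c}}$ (and similarly for $Y$). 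Thus the unit and counit for $\Psi$ and its right adjoint $\Phi_{\pP_R|_c}$ are isomorphisms, so $\Psi$ is an equivalence.

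The main obstacle is precisely this last base-change bookkeeping: checking that restriction to the fibre over $c$ commutes with convolution of kernels, and that $\pP_R$ restricts to the adjoint kernel of $\Psi$. Both statements reduce to flat base change together with compatibility of the relative dualizing complex with restriction to a fibre, but the triple-product diagrams that compute the convolutions force one to keep careful track of Tor-independence at each projection; I expect the bulk of the technical work to lie there, while the verification for $\oO_\Delta$ and the formal part above are routine.
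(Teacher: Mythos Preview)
The paper does not supply a proof of this theorem: it is quoted from \cite[Proposition~2.15]{HLS:09} and \cite[Lemma~2.5]{Ueh:15} and used as a black box, so there is no in-paper argument to compare against. Your write-up is in fact the standard proof one finds in those references: the first assertion is exactly the flat base change plus projection formula computation you give, and the second is proved by restricting the unit and counit of the adjunction at the level of kernels. Note also that you silently corrected a typo in the statement: as written in the paper, ${i_Y}_*\circ\Phi$ and $\Psi\circ{i_X}_*$ do not compose, and the intended assertion (consistent with the commutative square in Theorem~\ref{thm:twis}) is your $\Phi\circ{i_X}_*\cong{i_Y}_*\circ\Psi$.

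One small point worth tightening in your second paragraph: you assert that the right-adjoint kernel $\pP_R$ restricts to the adjoint kernel for $\Psi$, and that convolution over $C$ restricts to convolution over $\{c\}$. Both are true here, but the clean way to justify them is to observe that everything in sight is obtained by base change from $C$ to $\{c\}$ along a regular immersion into a smooth base, so all the fibre products involved are Tor-independent over $C$; in particular the relative dualizing complex $\omega_{X/C}$ restricts to $\omega_{X_c}$ (this uses smoothness of $C$), which is what makes $(\pP_R)|_c$ the correct adjoint kernel. You flag this as ``the main obstacle'' and that is accurate, but once Tor-independence is in place the verification is mechanical rather than subtle.
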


The $i$-th projection $A_1 \times A_2  \rightarrow A_i$ will be denoted by ${\pi_i} _{A_1 A_2}$.
We show that the kernel $\pP_{\oO_G(k)}$ of the spherical twist $T_{\oO_G(k)}$ can be obtained by the pushforward of an object in $\dD^b(S\times_B S)$.
By the Grothendieck-Verdier duality, we have an isomorphism
\begin{align}\label{lem:dual}
(i_*\oO_G(k))^{\vee} \cong i_*\oO_G(-k-2)[-1]
\end{align}where $i$ is the inclusion $G\hookrightarrow S$.

\begin{lem}
Let $j_{GG\rightarrow SS}\colon G\times G \rightarrow S\times S$ be a closed immersion.
Then,
\[
{\pi_1}_{S S}^* \oO_G(k) \otimes {\pi_2}_{S S}^*( \oO_G(k))^{\vee} \cong {j_{GG\rightarrow SS}}_* ({\pi_1}_{GG}^*\oO_G(k) \otimes {\pi_2}_{GG}^*\oO_G(-k-2))[-1].
\]
\end{lem}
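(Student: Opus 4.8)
The plan is to reduce the identity to a statement about pullbacks of sheaves living on $G \times G$ and to use the projection formula together with the duality isomorphism \eqref{lem:dual}. First I would observe that the two closed immersions $G \times G \hookrightarrow S \times S$ can be factored in two ways through the "partial" subvarieties $G \times S$ and $S \times G$: writing $i \colon G \hookrightarrow S$ for the inclusion, we have $j_{GG \to SS} = (i \times \mathrm{id}_S) \circ (\mathrm{id}_G \times i) = (\mathrm{id}_S \times i) \circ (i \times \mathrm{id}_G)$. The point of this factorization is that ${\pi_1}_{SS}^*\oO_G(k)$ is already supported on $G \times S$ — indeed ${\pi_1}_{SS}^*\oO_G(k) \cong (i \times \mathrm{id}_S)_* {\pi_1}_{GS}^*\oO_G(k)$ by flat base change along the Cartesian square relating $G \times S$ to $S \times S$ via $\pi_1$ — and symmetrically ${\pi_2}_{SS}^*(\oO_G(k))^{\vee}$ is pushed forward from $S \times G$.

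Next I would compute the tensor product ${\pi_1}_{SS}^*\oO_G(k) \otimes {\pi_2}_{SS}^*(\oO_G(k))^{\vee}$ by pushing both factors down to their common support. Since the first factor is $(i \times \mathrm{id}_S)_*$ of something on $G \times S$, the projection formula lets me rewrite the tensor product as $(i \times \mathrm{id}_S)_*\bigl({\pi_1}_{GS}^*\oO_G(k) \otimes (i \times \mathrm{id}_S)^* {\pi_2}_{SS}^*(\oO_G(k))^{\vee}\bigr)$. Now $(i \times \mathrm{id}_S)^* {\pi_2}_{SS}^* = {\pi_2}_{GS}^*$, so the inner second factor is ${\pi_2}_{GS}^*(\oO_G(k))^{\vee}$, and invoking \eqref{lem:dual} this is ${\pi_2}_{GS}^*\bigl(i_*\oO_G(-k-2)[-1]\bigr)$. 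Repeating the same projection-formula maneuver along the second inclusion $\mathrm{id}_G \times i \colon G \times G \hookrightarrow G \times S$ — using that ${\pi_2}_{GS}^* i_* \cong (\mathrm{id}_G \times i)_* {\pi_2}_{GG}^*$ by flat base change again — collapses everything onto $G \times G$ and yields ${j_{GG \to SS}}_*\bigl({\pi_1}_{GG}^*\oO_G(k) \otimes {\pi_2}_{GG}^*\oO_G(-k-2)\bigr)[-1]$, which is exactly the claimed right-hand side.

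The main obstacle I anticipate is making the flat base change and projection formula applications genuinely rigorous at the level of derived categories: the inclusion $i \colon G \hookrightarrow S$ is not flat, so the relevant Cartesian squares (e.g. $G \times S \to S \times S$ over $\pi_1 \colon S \times S \to S$) need to be checked for Tor-independence before one may assert $L(i \times \mathrm{id}_S)^* \circ R{\pi_2}_{SS}^* \simeq R{\pi_2}_{GS}^* \circ Li^*$ and similar identities. In fact the squares in question are of the form "base change of a smooth (hence flat) projection along an arbitrary closed immersion," and projections are flat, so Tor-independence is automatic and the derived base change maps are isomorphisms; but this is the step that warrants careful bookkeeping. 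A secondary point to watch is that the shift $[-1]$ and the derived dual $(\oO_G(k))^{\vee}$ in \eqref{lem:dual} are computed on $S$, and one must confirm that pulling back along the smooth projection ${\pi_2}_{GS}$ commutes with the dual up to the expected twist — this is fine because ${\pi_2}_{GS}$ is flat, so $L{\pi_2}_{GS}^*$ is exact and commutes with $R\mathcal{H}om$ into a perfect complex. Once these compatibilities are in place the computation is a direct chase through the two factorizations described above.
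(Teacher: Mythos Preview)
Your proposal is correct and follows essentially the same route as the paper: factor through $G\times S$, apply flat base change along the projection to identify ${\pi_1}_{SS}^*\oO_G(k)$ as a pushforward, use the projection formula, invoke the duality isomorphism (\ref{lem:dual}), and repeat base change plus projection formula along the second inclusion to collapse onto $G\times G$. Your additional remarks on Tor-independence and the compatibility of flat pullback with derived duals are more careful than the paper's treatment but do not change the argument.
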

\begin{proof}
Consider the following diagram:
\[\begin{tikzcd}
	&& {G\times G} \\
	G & {G \times S} && G \\
	S & {S\times S} & S
	\arrow["{j_{GS\rightarrow SS}}", hook, from=2-2, to=3-2]
	\arrow["{{\pi_1}_{GS}}"', from=2-2, to=2-1]
	\arrow["{{\pi_1}_{SS}}", from=3-2, to=3-1]
	\arrow["i",hook, from=2-1, to=3-1]
	\arrow["{{\pi_2}_{SS}}", from=3-2, to=3-3]
	\arrow["{j_{GG\rightarrow GS}}", hook, from=1-3, to=2-2]
	\arrow["{{\pi_2}_{GG}}", from=1-3, to=2-4]
	\arrow["i",hook, from=2-4, to=3-3]
\end{tikzcd}\]
Then,
\begin{align*}
{\pi_1}_{SS}^* \oO_G(k) \otimes {{\pi_2}_{SS}}^*( \oO_G(k))^{\vee} &  \cong {j_{GS \rightarrow SS}}_* ({\pi_1}_{GS}^* \oO_G(k)) \otimes {\pi_2}_{SS}^* ( \oO_G(k))^{\vee}\\
&\cong {j_{GS \rightarrow SS}}_* ({\pi_1}_{GS}^* \oO_G(k) \otimes {\pi_2}_{GS}^* ( \oO_G(k))^{\vee}) \\
& \cong {j_{GS \rightarrow SS}}_* ({\pi_1}_{GS}^* \oO_G(k) \otimes {j_{GG \rightarrow GS}}_* {\pi_2}_{GG}^* ( \oO_G(-k-2)))[-1] \\
&\cong {j_{GG\rightarrow SS}}_* ({\pi_1}_{GG}^*\oO_G(k) \otimes {\pi_2}_{GG}^*\oO_G(-k-2))[-1].
\end{align*}
Here, the first isomorphism follows from the flat base change, the second isomorphism follows from the projection formula, the third isomorphism follows from Isomorphism (\ref{lem:dual}) and the last isomorphism follows from the projection formula.
\end{proof}

Next, we construct a map
\begin{align}\label{map:1}
{i_{GG}}_*({\pi_1}_{GG}^*\oO_G(k) \otimes {\pi_2}_{GG}^*\oO_G(-k-2))[-1] \rightarrow \Delta_{S *} \oO_S.
\end{align}
where each map is defined in the following diagram:
\[\begin{tikzcd}
	G & S \\
	{G \times G} & {S \times_B S} \\
	&& {S\times S}
	\arrow["i", from=1-1, to=1-2]
	\arrow["{\Delta_G}"', from=1-1, to=2-1]
	\arrow["{\Delta_S}"{description}, from=1-2, to=2-2]
	\arrow["{i_{GG}}"', from=2-1, to=2-2]
	\arrow["j"', from=2-2, to=3-3]
	\arrow["{\iota_S}"{description}, from=1-2, to=3-3]
\end{tikzcd}\]

Map (\ref{map:1}) is given as the composition of the pushforward of the restriction
\begin{align*}
{i_{GG}} _* \eta  \colon {i_{GG}}_*({\pi_1}_{GG}^*\oO_G(k) \otimes {\pi_2}_{GG}^*\oO_G(-k-2)) \rightarrow &{i_{GG}}_* {\Delta_G}_*(\oO_G(-2))\\
&\cong{i_{GG}}_* {\Delta_G}_*{\Delta_G}^*({\pi_1}_{GG}^*\oO_G(k) \otimes {\pi_2}_{GG}^*\oO_G(-k-2))\\
\end{align*}
and the following morphism
\[
{\Delta_S}_*f\colon {i_{GG}}_* {\Delta_G}_*(\oO_G(-2))[-1] \cong {\Delta_S}_* i_*\oO_G(-2)[-1]\rightarrow {\Delta_S}_*\oO_S
\]
where $f$ is the non-zero morphism defined by the non-zero unique vector in $\Hom(i_*\oO_G(-2)[-1],\oO_S) \cong \bbC$, and it is determined up to scalar.

\begin{lem}
There is a following commutative diagram:
\[\begin{tikzcd}
	{{{\pi_1}_{SS}}^*\oO_G(k)\otimes {{\pi_2}_{SS}}^*\oO_G(k)^{\vee}} & {{\iota_S}_*{\iota_S}^*({{\pi_1}_{SS}}^*\oO_G(k)\otimes {{\pi_2}_{SS}}^*\oO_G(k)^{\vee})} & {\oO_{\Delta_S}} \\
	{{j_{GG \rightarrow SS}}_*({{\pi_1}_{GG}}^*\oO_G(k) \otimes {{\pi_2}_{GG}}^*\oO_G(-k-2))[-1] } & {j_*{\Delta_S}_* i_*\oO_G(-2)[-1]}& {\oO_{\Delta_S}}
	\arrow[from=1-1, to=2-1]
	\arrow["{\eta_S}",from=1-1, to=1-2]
	\arrow["{i_{GG}} _* \eta",from=2-1, to=2-2]
	\arrow[from=1-2, to=2-2]
	\arrow["j_*{\Delta_S}_*f",from=2-2, to=2-3]
	\arrow["{\iota_S \rm{tr}}", from=1-2, to=1-3]
	\arrow["id",from=1-3, to=2-3]
\end{tikzcd}\]
\end{lem}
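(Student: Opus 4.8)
The plan is to construct the commutative diagram by identifying its two squares separately and then gluing them. The left square relates the "kernel before taking cones" objects on $S\times S$ and their restrictions; the right square relates the trace/evaluation maps. The key observation is that both rows are, by the two preceding lemmas, concrete pushforwards along closed immersions, so every vertical arrow can be taken to be the canonical comparison map between a sheaf on $S\times S$ and the pushforward of its restriction to the relevant closed subscheme. Concretely, the left vertical arrow is the isomorphism of the preceding lemma, the middle vertical arrow is the composite of base-change/projection-formula isomorphisms identifying ${\iota_S}_*{\iota_S}^*({\pi_1}_{SS}^*\oO_G(k)\otimes{\pi_2}_{SS}^*\oO_G(k)^\vee)$ with $j_*{\Delta_S}_*i_*\oO_G(-2)[-1]$, and the right vertical arrow is the identity on $\oO_{\Delta_S}$.

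First I would verify commutativity of the left square. The top arrow $\eta_S$ is the restriction-to-diagonal unit map on $S\times S$ (i.e. the map \eqref{def:unit} for $\eE=i_*\oO_G(k)$), and the bottom arrow ${i_{GG}}_*\eta$ is the pushforward along $j_{GG\to SS}$ of the analogous restriction-to-diagonal map $\eta$ computed on $G\times G$. Since restriction to a closed subscheme and the adjunction unit are compatible with further pullback/pushforward along closed immersions, this square commutes essentially by functoriality of the unit $\mathrm{id}\to \iota_*\iota^*$: both composites are the canonical map from ${\pi_1}_{SS}^*\oO_G(k)\otimes{\pi_2}_{SS}^*\oO_G(k)^\vee$ to the pushforward of its restriction to the diagonal copy of $G$. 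I would spell this out by factoring $\iota_S$ through $j\circ\Delta_S\circ i$ and using that $\iota_S^*=i^*\Delta_S^*j^*$ together with the Grothendieck–Verdier identification \eqref{lem:dual}, which is exactly how the bottom row was built in the two lemmas above.

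Next I would verify commutativity of the right square. Here the top arrow is the trace map $\iota_S\,\mathrm{tr}$ landing in $\oO_{\Delta_S}$, while the bottom arrow $j_*{\Delta_S}_*f$ is induced by the distinguished nonzero morphism $f\colon i_*\oO_G(-2)[-1]\to\oO_S$, unique up to scalar since $\Hom_S(i_*\oO_G(-2)[-1],\oO_S)\cong\bbC$ by \eqref{lem:dual}. The point is that the trace map $\iota_S(\eE^\vee\otimes\eE)\to\oO_{\Delta_S}$, after identifying $\eE^\vee\otimes\eE$ (with $\eE=i_*\oO_G(k)$) via \eqref{lem:dual} with $i_*\oO_G(-2)[-1]$, must coincide up to scalar with $f$ because the target Hom-space is one-dimensional; rescaling $f$ (which is allowed, it is only defined up to scalar) makes the square commute on the nose. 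I would make this precise by computing $\Hom_{S\times S}(\iota_{S*}(\eE^\vee\otimes\eE),\oO_{\Delta_S})\cong\Hom_S(\eE^\vee\otimes\eE,\oO_S)\cong\Hom_S(i_*\oO_G(-2)[-1],\oO_S)\cong\bbC$ and checking that the trace is nonzero, so that it spans, hence agrees with $f$ up to the scalar we are free to fix.

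The main obstacle will be the right square: one must check that the trace map, under the chosen Serre-duality identification of $\eE^\vee\otimes\eE$ with $i_*\oO_G(-2)[-1]$, is genuinely nonzero (so that the one-dimensionality argument applies) and that the scalar can be absorbed into the still-undetermined normalization of $f$. For nonvanishing of the trace I would argue that $\mathrm{tr}\colon \eE^\vee\otimes\eE\to\oO_S$ is split by the unit $\oO_S\to\eE^\vee\otimes\eE$ up to the scalar $\mathrm{rk}$, or more safely, since $\eE$ is a spherical object, that $\Hom^0(\eE,\eE)\cong\bbC$ forces the trace to be the (nonzero) counit of the evaluation pairing; combined with the dimension count this pins everything down. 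The left square is comparatively routine, being pure naturality of adjunction units along the tower of closed immersions $G\times G\hookrightarrow G\times S\hookrightarrow S\times S$ already exhibited above.
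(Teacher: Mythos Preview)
Your right-square argument is essentially the paper's: compute that the relevant $\Hom$ space is one-dimensional, check nonvanishing of the trace, and absorb the scalar into the normalisation of $f$.

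The genuine divergence is the left square. You propose to prove it by pure naturality of adjunction units along the chain of closed immersions, calling it ``comparatively routine''. The paper does \emph{not} do this. Instead it applies the same one-dimensionality trick to the left square as well: it defines the middle vertical map $g$ via the unit for $i_*i^*$, observes that $g\circ\eta_S$ is nonzero, and then computes
\[
\Hom_{S\times S}\bigl({\pi_1}_{SS}^{*}\oO_G(k)\otimes{\pi_2}_{SS}^{*}\oO_G(k)^{\vee},\ {\iota_S}_{*}\oO_G(-2)[-1]\bigr)\cong\bbC,
\]
which forces the two composites to agree. This is both shorter and more robust.

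Your naturality approach is not wrong in spirit, but it is less routine than you suggest. The vertical isomorphism on the left is built from the Grothendieck--Verdier identification $(i_*\oO_G(k))^{\vee}\cong i_*\oO_G(-k-2)[-1]$ together with a projection formula and a base change, and you would have to verify that this chain of isomorphisms intertwines the unit $\mathrm{id}\to{\iota_S}_*{\iota_S}^*$ on $S\times S$ with the pushed-forward unit ${i_{GG}}_*\eta$ on $G\times G$. That compatibility (duality versus adjunction units) is exactly the kind of sign/coherence check that tends to be delicate; the paper sidesteps it entirely by reducing to a $\Hom$ space of dimension one. If you pursue your route, the place to be careful is precisely here, not the right square.
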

\begin{proof}
First, we define a map 
\[{{\iota_S}_*{\iota_S}^*({\pi_1}_{SS}^*\oO_G(k)\otimes {\pi_2}_{SS}^*\oO_G(k)^{\vee})} \rightarrow {j_*{\Delta_S}_* i_*\oO_G(-2)[-1]}\cong {\iota_S}_* \oO_G(-2)[-1]. \]
Since we have an isomorphism in $\dD^b(G)$
\[
i_*i^*{{\iota_S}^*({\pi_1}_{SS}^*\oO_G(k)\otimes {\pi_2}_{SS}^*\oO_G(k)^{\vee})}\cong \oO_G(-2)[-1], 
\]
there is a natural map $g\colon {{\iota_S}_*{\iota_S}^*({\pi_1}_{SS}^*\oO_G(k)\otimes {\pi_2}_{SS}^*\oO_G(k)^{\vee})} \rightarrow {\iota_S}_*\oO_G(-2)[-1]$.

Since $g \circ \eta_S$ is non-zero and $\Hom({{\pi_1}_{SS}}^*\oO_G(k)\otimes {{\pi_2}_{SS}}^*\oO_G(k)^{\vee},{\iota_S}_*\oO_G(-2)[-1]) \cong \bbC$, the left square of the diagram commutes.
The right square of the diagram can be obtained from the pushforward of the following diagram in $\dD^b(S)$:
\[\begin{tikzcd}
	 {{\iota_S}^*({{\pi_1}_{SS}}^*\oO_G(k)\otimes {{\pi_2}_{SS}}^*\oO_G(k)^{\vee})} & {\oO_{S}} \\
 \oO_G(-2)[-1] & {\oO_{S}}
	\arrow["g",from=1-1, to=2-1]
	\arrow["f",from=2-1, to=2-2]
	\arrow["{ \rm{tr}}", from=1-1, to=1-2]
	\arrow["id",from=1-2, to=2-2]
\end{tikzcd}\]
Since $f\circ g$ is non-zero and $\Hom({\iota_S}^*({{\pi_1}_{SS}}^*\oO_G(k)\otimes {{\pi_2}_{SS}}^*\oO_G(k)^{\vee}),{\oO_{S}}) \cong \bbC$, the diagram commutes.
\end{proof}

The result of Theorem \ref{thm:twis} follows from the above lemma.
\begin{thm}\label{thm:twis}
Let $j$ denote the closed immersion $S\times_B S\hookrightarrow S\times S$. There is an object $\pP_{G,k}$ in $\dD^b(S\times_B S)$ such that the kernel $\pP_{\oO_G(k)}$ of a spherical twist $T_{\oO_G(k)}$ is isomorphic to $j_*\pP_{G,k}$.
In particular, the spherical twist $T_{\oO_G(k)}$  induces an autoequivalence $\Phi\colon \dD^b(S_0) \to \dD^b(S_0)$, and the following diagram commutes:
\[\begin{tikzcd}
	{\dD^b(S_0)} & {\dD^b(S_0)} \\
	\dD^b(S) & \dD^b(S)
	\arrow["{{j_0}_*}", from=1-1, to=2-1]
	\arrow["\Phi"', from=1-1, to=1-2]
	\arrow["{T_{\oO_G(k)}}", from=2-1, to=2-2]
	\arrow["{{j_0}_*}"', from=1-2, to=2-2]
\end{tikzcd}\]
where $j_0$ is a closed immersion $S_0 \hookrightarrow S$.

\end{thm}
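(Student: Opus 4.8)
The plan is to read the conclusion off from the two lemmas of this subsection together with the base-change theorem recalled above, so that what remains is essentially assembly. Write $\eE = i_*\oO_G(k)\in\dD^b(S)$. Since $G$ is a smooth rational $(-2)$-curve, adjunction gives $\omega_S|_G\cong\oO_G$, whence $\eE\otimes\omega_S\cong\eE$; this together with the standard computation of $\bbR\Hom_S(\eE,\eE)$ (the cohomology of the real $2$-sphere) and the properness of $G$ makes $\eE$ a spherical object, so $T_{\oO_G(k)}$ is defined with kernel $\pP_{\oO_G(k)}=\mathrm{Cone}(\mathrm{ev})$, where $\mathrm{ev}$ is the evaluation map on $S\times S$ obtained by composing the restriction (\ref{def:unit}) with the trace (\ref{def:trace}). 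The crux is that every object occurring in $\mathrm{ev}$ is supported on the closed subscheme $j\colon S\times_B S\hookrightarrow S\times S$: the first Lemma above identifies the source of $\mathrm{ev}$ with $j_*$ of an object supported on $G\times G\subset S\times_B S$, while the target $\oO_\Delta$ equals $j_*\Delta_{S*}\oO_S$ because the absolute diagonal factors as $\iota_S=j\circ\Delta_S$ through the relative diagonal $\Delta_S\colon S\hookrightarrow S\times_B S$. The commutative-diagram Lemma above then says precisely that, under these identifications, $\mathrm{ev}$ is isomorphic as a morphism to $j_*$ of Map (\ref{map:1}). Since $j$ is a closed immersion, $j_*$ is exact and commutes with the formation of cones, so
\[
\pP_{\oO_G(k)}=\mathrm{Cone}(\mathrm{ev})\cong j_*\,\mathrm{Cone}\bigl(\text{Map }(\ref{map:1})\bigr),
\]
and one sets $\pP_{G,k}\coloneqq\mathrm{Cone}\bigl(\text{Map }(\ref{map:1})\bigr)\in\dD^b(S\times_B S)$.

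For the second assertion I would first note that the Fourier--Mukai functor $\dD^b(S)\to\dD^b(S)$ with kernel $\pP_{G,k}$ on $S\times_B S$ coincides with $T_{\oO_G(k)}$: the two projections $S\times_B S\to S$ are restrictions of the projections of $S\times S$, so the projection formula gives $\Phi_{\pP_{G,k}}\cong\Phi_{j_*\pP_{G,k}}=\Phi_{\pP_{\oO_G(k)}}=T_{\oO_G(k)}$. The structure morphism $f\colon S\to B$ is flat (both varieties are smooth and every fiber has dimension one) and projective, so the base-change theorem recalled above applies with $X=Y=S$, $C=B$, $c=0$ and $\pP=\pP_{G,k}$: it produces the functor $\Phi\coloneqq\Phi_{\pP_{G,k}|_{S_0\times S_0}}\colon\dD^b(S_0)\to\dD^b(S_0)$ together with a functorial isomorphism ${j_0}_*\circ\Phi\cong T_{\oO_G(k)}\circ{j_0}_*$, which is exactly the commutativity of the stated square; and since $T_{\oO_G(k)}$ is an equivalence, the same theorem forces $\Phi$ to be an equivalence, hence an autoequivalence of $\dD^b(S_0)$.

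The substantive work is already packaged into the two lemmas; the only genuinely delicate ingredient is the commutative-diagram Lemma, whose proof rests on the one-dimensionality of the $\Hom$-spaces in sight (so that the two nonzero morphisms there agree up to an invertible scalar, which can be absorbed) together with the compatibilities of flat base change, the projection formula, and the Grothendieck--Verdier duality behind (\ref{lem:dual}). Granting that, the steps above are formal: exactness of $j_*$, the identity $\Phi_{j_*\pP}\cong\Phi_\pP$, and the invocation of the base-change theorem; the remaining routine points are the sphericity of $\oO_G(k)$ in $\dD^b(S)$ and the flatness and projectivity of $f\colon S\to B$.
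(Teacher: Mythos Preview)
Your proposal is correct and follows essentially the same approach as the paper: the paper's own proof is the single sentence ``The result of Theorem \ref{thm:twis} follows from the above lemma,'' and you have faithfully unpacked this by taking $\pP_{G,k}$ to be the cone of Map (\ref{map:1}), using that $j_*$ is triangulated to identify $j_*\pP_{G,k}$ with $\pP_{\oO_G(k)}$, and then invoking the base-change theorem exactly as intended. Your added remarks on sphericity of $i_*\oO_G(k)$ and on the equality $\Phi_{\pP_{G,k}}\cong\Phi_{j_*\pP_{G,k}}$ via the projection formula are correct supplementary details that the paper leaves implicit.
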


We call these autoequivalences of $S_0$ the \textit{restricted spherical twists}. 
 For simplicity of notation, we use the same latter $T_{\oO_G(k)}$ for the autoequivalence obtained from the restriction of the spherical twist $T_{\oO_G(k)}$.

 \begin{rmk}
Let $f \colon S \rightarrow S^{\prime}$ be a birational map contracting the $(-2)$ curve $G$.
By the same argument, we conclude the kernel $\pP_{\oO_G(k)}$ is isomorphic to
the pushforward of a torsion-free sheaf $\pP \in \Coh(S\times_{S^{\prime}}S)$ of rank one.
Indeed, the sheaf $\pP$ can be obtained from an exact sequence in $\Coh(S\times_{S^{\prime}}S)$
\[
	0 \rightarrow \oO_{\Delta_S} \rightarrow \pP \rightarrow {i_{S^{{\prime}}}}_*({{\pi_1}_{GG}}^*\oO_G(k) \otimes {{\pi_2}_{GG}}^*\oO_G(-k-2)) \rightarrow 0
\] where $i_{S^{\prime}}$ is a closed immersion $G \times G \hookrightarrow S \times_{S^{\prime}}S$.

 \end{rmk}

\section{Stability conditions on $C$}
In this section, 
we construct a stability condition corresponding to the "slope stability" on a non-irreducible curve which is obtained as a singular fiber of an elliptic surface.
Let $S\rightarrow B$ be a relatively minimal elliptic surface with a quasi-projective curve $B$ and $C$ be a singular fiber of $S$. 
We consider the singular curve $C$ in the following list as reducible \textit{Kodaira curves}.
\begin{itemize}
\item[$(I_N)$] a cycle of $N$ projective lines with $N\geq2$.
\item[$(III)$] two projective lines which meet at one point of order 2.
\item[$(IV)$] three projective lines which meet in one point.
\item[$(I^*_N)$] $N+5$ projective lines corresponding to affine Dynkin diagram $\widetilde{D_{N+4}}$.
\item[$(II^*)$] $8$ projective lines corresponding to affine Dynkin diagram $\widetilde{E_{8}}$.
\item[$(III^*)$] $7$ projective lines corresponding to affine Dynkin diagram $\widetilde{E_{7}}$.
\item[$(IV^*)$] $6$ projective lines corresponding to affine Dynkin diagram $\widetilde{E_{6}}$. 
\item[$(_mI_N)$] $C = mC_N$ where $C_N$ is a curve corresponding to $(I_N)$ with $N\geq2$. 
\end{itemize}

The above list does not contain all possible singular fibers of elliptic surfaces, and it is not our purpose to study the following irreducible curves:
\begin{itemize}
\item[$(I_0) $] a smooth elliptic curve.
\item [$(I_1) $] a rational curve with one node.
\item [$(II) $] a rational curve with one cusp.
\item[$(_mI_0) $] $C = mD$ where $D$ is a smooth elliptic curve.
\item[$(_mI_1) $] $C = mD$ where $D$ is a rational curve with one node.
\end{itemize}

Let $C_1,..,C_n$ denote irreducible components of $C$. We will denote by $\Theta_i$ the reduced curve of $C_i$.
Let $E$ be a coherent sheaf on $C$ and denote $E_{C_i} = E|_{C_i}/(\rm{torsion})$. 
For any pure dimension one sheaf $E$, there are two exact sequences
\begin{align}
&0 \rightarrow E^{C_i} \rightarrow E \rightarrow E_{C_i} \rightarrow 0,\\
&0 \rightarrow E \rightarrow E_{C_1}\oplus \cdots \oplus E_{C_n} \rightarrow T \rightarrow 0,
\end{align}
where $E^{C_i}$ is supported on $C_1\cup \cdots \cup C_{i-1}\cup C_{i+1}\cup \cdots \cup C_{n}$, and $T$ is a torsion sheaf whose support is contained in the set of intersections of $C_1,\ldots,C_n$.
From now on, $\Theta^{\mathrm{sing}}$ denotes the set of intersections.

\begin{prop}[{\cite[Proposition 4.3.]{MP:17}}]
Let $C$ be a connected and projective curve and $G(C)$ be the Grothendieck group of the curve $C$.
If every irreducible component $C_i$ of $C$ is isomorphic to $m_i\bbP^1$ for some $m_i$, 
then the group $G(C)$ is free group of  rank $n+1$, that is $G(C) \cong \bbZ^{n+1}$.
Moreover, the group $G(C)$ is generated by 
\[
[\oO_{\Theta_1}],\ldots,[\oO_{\Theta_n}],[\oO_x].
\]
\end{prop}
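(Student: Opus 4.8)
The plan is to run a dévissage of $\Coh(C)$ along the filtration by dimension of support, invoking the classical computation of $K_0(\bbP^1)$ on each irreducible component, and then to pin down the rank by exhibiting $n+1$ independent additive functionals on $G(C)=K_0(\Coh(C))$.

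For the generation statement I would argue as follows. Given $E\in\Coh(C)$, let $T(E)\subseteq E$ be its maximal $0$-dimensional subsheaf, so $[E]=[T(E)]+[E/T(E)]$ with $E/T(E)$ pure of dimension one (or zero). A $0$-dimensional sheaf has a composition series whose factors are residue fields $\oO_x$, and I would first check that $[\oO_x]$ is independent of the closed point $x$: on a reduced component $\Theta_i\cong\bbP^1$ this follows from $0\to\oO_{\Theta_i}(-1)\to\oO_{\Theta_i}\to\oO_x\to 0$; a closed point of a non-reduced component $C_i\cong m_i\bbP^1$ has the same skyscraper sheaf as the corresponding point of $\Theta_i$; and connectedness of $C$ lets one pass between components along their nodes. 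Hence $[T(E)]\in\bbZ[\oO_x]$. For the pure part, the second exact sequence recorded before the Proposition, namely $0\to E\to\bigoplus_i E_{C_i}\to T\to 0$ with $T$ a $0$-dimensional torsion sheaf, reduces the computation, modulo $\bbZ[\oO_x]$, to classes pushed forward from the individual components. On $C_i\cong m_i\bbP^1$, filtering a coherent sheaf by powers of the nilradical ideal $J$ (with $J^{m_i}=0$) produces graded pieces that are coherent sheaves on $\Theta_i\cong\bbP^1$, and on $\bbP^1$ every coherent sheaf has class $r[\oO]+d[\oO_x]$ for integers $r,d$ (split off torsion, then use $[\oO(a)]=[\oO]+a[\oO_x]$). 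Pushing forward gives that every such class lies in $\bbZ[\oO_{\Theta_i}]+\bbZ[\oO_x]$, and assembling the pieces shows that $[\oO_{\Theta_1}],\dots,[\oO_{\Theta_n}],[\oO_x]$ generate $G(C)$.

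For freeness of rank $n+1$ I would detect these generators by $n+1$ additive invariants. For each $i$, define $\rho_i\colon G(C)\to\bbZ$ by sending $[E]$ to the length of $E_{\eta_i}$ over the Artinian local ring $\oO_{C,\eta_i}$, where $\eta_i$ is the generic point of $C_i$; this is additive because localization is exact and length is additive on finite-length modules. A direct computation gives $\rho_i([\oO_x])=0$ and $\rho_i([\oO_{\Theta_j}])=\delta_{ij}$. Let $\chi\colon G(C)\to\bbZ$ be the Euler characteristic; then $\chi([\oO_x])=\chi([\oO_{\Theta_i}])=1$. The matrix of $(\rho_1,\dots,\rho_n,\chi)$ evaluated on $([\oO_{\Theta_1}],\dots,[\oO_{\Theta_n}],[\oO_x])$ is lower triangular with $1$'s on the diagonal, hence of determinant $1$. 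Therefore the surjection $\bbZ^{n+1}\to G(C)$ sending the standard basis to these generators is injective as well, so it is an isomorphism, which is the assertion.

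The classical inputs — $K_0(\bbP^1)=\bbZ^2$ and the collapse of all $0$-dimensional classes to one — are routine, and for the reduced Kodaira curves ($I_N$, $III$, $IV$, $I^*_N$, $II^*$, $III^*$, $IV^*$) the whole argument is essentially standard. The step that needs genuine care, and which I expect to be the main point, is the treatment of the non-reduced components $C_i\cong m_i\bbP^1$ occurring for fibers of Kodaira type $(_mI_N)$: one must justify that the $J$-adic filtration of an arbitrary coherent sheaf on $C_i$ has graded pieces that are honest $\bbP^1$-sheaves, and keep careful track of the fact that the statement is phrased with the \emph{reduced} structure sheaves $\oO_{\Theta_i}$ (indeed $\rho_i([\oO_{C_i}])=m_i$ while $\rho_i([\oO_{\Theta_i}])=1$, so $[\oO_{C_i}]\ne[\oO_{\Theta_i}]$ in general).
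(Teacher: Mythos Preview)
The paper does not prove this proposition; it is quoted verbatim from \cite[Proposition~4.3]{MP:17} and used as a black box. So there is nothing to compare your argument against in the present paper.

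That said, your d\'evissage is the standard route and is correct. The generation step---splitting off the torsion subsheaf, collapsing all skyscraper classes to a single $[\oO_x]$ via connectedness, reducing to components by the sequence $0\to E\to\bigoplus_i E_{C_i}\to T\to 0$, and then filtering by powers of the nilradical on each $C_i\cong m_i\bbP^1$ to land in $K_0(\bbP^1)$---is sound. Your freeness argument via the functionals $\rho_i$ (generic length) and $\chi$ is clean and gives the right determinant. The caution you flag about non-reduced components is exactly the point requiring care, and your handling of it (the $J$-adic filtration has graded pieces that are genuine $\oO_{\Theta_i}$-modules because $J^k E/J^{k+1}E$ is annihilated by $J$) is correct.
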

Let $\rk_i(E)$ denote the component of $[E]$ corresponding to $[\oO_{\Theta_i}]$. 
By above proposition, we have the isomorphism defined by
\[
\begin{array}{ccc}
v\colon G(C)&\longrightarrow &\bbZ^{n+1} \\
\vin &&\vin\\
 E&\longmapsto& ( \chi(E),\rk_1 E,\ldots,\rk_n E).
\end{array}
\]
Here, $\chi(E)$ is the Euler characteristic of $E$.
Since $G(C)$ is of finite rank, we fix the above isomorphism $v$, and we consider stability conditions with respect to $(v,\bbZ^{n+1})$. To simplify notation, we write $[E]$ instead of the class $v(E)$ for each object $E \in \dD^b(C)$

\subsection{Constructing stability conditions corresponding to the slope stability}

First, we take $z_1,\ldots,z_n \in \bbH $ and consider a group homomorphism $Z : G(C) \rightarrow \bbC$ defined by
\[
Z(E) = -\chi(E) + z_1\rk_1E + \cdots+ z_n\rk_nE.
\]
It is clear that $Z$ defines a stability function on $\Coh(C)$. We will show that $(Z ,\Coh(C))$ is a stability condition.

\begin{prop}
For the above stability function $Z$, $(Z,\Coh(C))$ satisfies HN property.
\end{prop}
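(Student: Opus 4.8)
The plan is to deduce the Harder--Narasimhan property from the standard mechanism of \cite[Prop.~2.4]{Bri:07}: since $\Coh(C)$ is a Noetherian abelian category ($C$ being projective), it suffices to check that for every $0 \neq E \in \Coh(C)$ the supremum $\phi^{+}(E) := \sup\{\phi(E')\mid 0 \neq E' \subseteq E\}$ of the phases of its subobjects is \emph{attained}; given that, one extracts the HN filtration by repeatedly splitting off the maximal subobject of maximal phase, the chain terminating by the Noetherian property.

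The only step that uses the geometry is the verification that $\phi^{+}(E)$ is attained. First I would observe that every subsheaf $E' \subseteq E$ satisfies $0 \le \rk_i(E') \le \rk_i(E)$ for all $i$, so the vectors $(\rk_1(E'),\dots,\rk_n(E'))$ lie in a finite set, and, because $C$ is a curve, $\chi(E') = h^0(E') - h^1(E') \le h^0(E') \le h^0(E)$, the last inequality because $\Hom(\oO_C,-)$ is left exact. Hence $\Imm Z(E') = \sum_i \Imm(z_i)\,\rk_i(E')$ takes only finitely many nonnegative values: it vanishes exactly on $0$-dimensional $E'$, for which $\phi(E') = 1$; and when it equals one of the finitely many positive values $c$, the real part $\Ree Z(E') = -\chi(E') + \sum_i \Ree(z_i)\,\rk_i(E')$ ranges over a subset of $\bbR$ that is discrete and bounded below (by $-h^0(E)$ plus a constant), so it attains a minimum and $\arg\!\bigl(\Ree Z(E') + \sqrt{-1}\,c\bigr)$ attains a maximum $<\pi$. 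Taking the maximum over the finitely many values of $c$, and comparing with the torsion case, shows that $\phi^{+}(E)$ is attained, and that it equals $1$ exactly when $E$ has a nonzero torsion subsheaf.

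With this in hand the extraction is formal. Put $\phi_1 = \phi^{+}(E)$; the subobjects of $E$ of phase $\phi_1$ form a nonempty family closed under finite sums (a short seesaw argument, using that $E$ has no subobject of phase $>\phi_1$), so by the Noetherian property it has a largest member $A_1$, which is then semistable of phase $\phi_1$. If some $B/A_1 \subseteq E/A_1$ had phase $\ge\phi_1$, then $B$ would be a subobject of $E$ of phase $\ge \phi_1$ strictly containing $A_1$, contradicting its maximality; hence $\phi^{+}(E/A_1) < \phi_1$. Iterating gives a chain $0 = A_0 \subsetneq A_1 \subsetneq A_2 \subsetneq \cdots$ with semistable quotients $A_j/A_{j-1}$ of strictly decreasing phases, which terminates at $A_m = E$ by the Noetherian property; this is the required HN filtration.

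The crux is the attainment of $\phi^{+}(E)$ in the second step. The point to be careful about --- which has no analogue on a smooth curve, where $\Imm Z$ automatically has discrete image --- is that the $\Imm(z_i)$ need not be rationally related, so $\Imm Z$ is not discrete on all of $\Coh(C)$; what saves the argument is that only finitely many rank vectors occur among the subobjects of a fixed $E$, which restores the needed discreteness, together with the elementary bound $\chi(E') \le h^0(E)$ that keeps phases from accumulating at the top. Everything else is routine HN bookkeeping, and one may alternatively just invoke \cite[Prop.~2.4]{Bri:07} once this attainment is established.
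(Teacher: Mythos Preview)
Your proof is correct, and the core input---finiteness of the rank vectors $(\rk_1 E',\dots,\rk_n E')$ among subobjects of a fixed $E$, combined with Noetherianity of $\Coh(C)$---is exactly what the paper uses. The packaging, however, is different.

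The paper verifies directly the two chain conditions of \cite[Prop.~2.4]{Bri:07}. The quotient chain terminates by Noetherianity. For the subobject chain $\cdots\subset E_{j+1}\subset E_j\subset\cdots\subset E_1$ with $\phi(E_{j+1})>\phi(E_j)$, the paper observes that the $\rk_i(E_j)$ eventually stabilise, whence each successive quotient $T=E_j/E_{j+1}$ is zero-dimensional and thus has $\chi(T)\ge 0$; but the phase inequality forces $\chi(E_{j+1})>\chi(E_j)$, i.e.\ $\chi(T)<0$, a contradiction. This uses only the inequality $\chi\ge 0$ on zero-dimensional sheaves.

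You instead prove attainment of $\phi^+(E)$ via the explicit upper bound $\chi(E')\le h^0(E')\le h^0(E)$, and then construct the HN filtration by hand (maximal destabilising subobject, iterate). This is a genuine alternative route: it trades the paper's short contradiction argument for a direct construction, at the cost of an extra (though elementary) cohomological bound. Your remark that $\Imm Z$ is not discrete on all of $\Coh(C)$ but becomes so on subobjects of a fixed $E$ is well taken, and is the analogue of the paper's ``ranks stabilise'' step. One small caveat: your final sentence suggests that attainment of $\phi^+$ is literally one of the hypotheses of \cite[Prop.~2.4]{Bri:07}; it is not---that proposition is phrased via the chain conditions---but your hand-built filtration already closes the argument without needing the citation.
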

\begin{proof}
Applying the criterion of \cite[Theorem 2.4.]{Bri:07}, it is enough to show the following two conditions.
\begin{enumerate}
\item There are no infinite sequences in $\Coh(C)$
\[
\cdots \subset E_{j+1}\subset E_{j} \subset \cdots \subset E_1
\]
with $\phi(E_{j+1}) > \phi (E_j) $ for all $j$.

\item  There are no infinite sequences in $\Coh(C)$
\[
E_1 \twoheadrightarrow E_2 \twoheadrightarrow \cdots E_j \twoheadrightarrow E_{j+1}
\]
with $\phi(E_j) > \phi(E_{j+1})$ for all $j$.
\end{enumerate} 
Since $\Coh(C)$ is Noetherian, Condition (2) is satisfied.

Next,we show Condition (1).
Suppose we have a sequence in $\Coh(C)$
\[
\cdots \subset E_{j+1} \subset \cdots \subset E_1
\]
with $\phi(E_{j+1}) > \phi (E_j) $ for all $j$.
Since $\rk_i(E_j)$ is non-negative, we can assume $\rk_i E_j$ is constant. Since $\phi(E_{j+1}) > \phi(E_{j})$, we have $\chi(E_{j+1})  >\chi(E_{j})$.
However, we have a following exact sequence
\[
0 \rightarrow E_{j+1} \rightarrow E_{j} \rightarrow T \rightarrow 0
\]
where $T$ is a torsion sheaf. So, $\chi(T) >0$. But, this contradicts $\chi(E_{j+1})  >\chi(E_{j})$.
\end{proof}

We will show the following proposition in Proposition \ref{section61}.
\begin{prop}\label{prop:supp porp}
Let $Z\colon G(C) \to \bbC$ be a group morphism defined by
\[
Z(E) = -\chi(E) + z_1\rk_1E + \cdots+ z_n\rk_nE.
\]
Then, a pre-stability condition $(Z,\Coh(C))$ satisfies the support property.
\end{prop}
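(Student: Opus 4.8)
The plan is to verify the support property via the quadratic form criterion of Lemma A.4 of \cite{BMS:16}: I want to exhibit a quadratic form $Q$ on $\bbZ^{n+1}\otimes\bbR = \bbR^{n+1}$ which is negative definite on $\Ker Z$ and nonnegative on the classes of all $Z$-semistable objects of $\Coh(C)$. The natural candidate is the form coming from the Euler pairing on the elliptic surface $S$, namely $Q(v) = -\langle v,v\rangle = \chi(i_*E,i_*E)$ in the notation of the introduction; since $\langle-,-\rangle$ vanishes on the span of the fiber classes and is negative definite on the $(-2)$-curve directions (the intersection form on the components of a Kodaira fiber is negative semidefinite, with radical spanned by the fiber), $-\langle-,-\rangle$ is a genuine candidate once one checks it is negative definite on $\Ker Z$. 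The latter is a linear-algebra computation: $\Ker Z$ is a real codimension-two subspace of $\bbR^{n+1}$ (real codimension two because $Z$ is $\bbC$-valued), and one checks directly, using $z_i\in\bbH$, that it meets the radical of $\langle-,-\rangle$ only in $0$, so $-\langle-,-\rangle$ is negative definite there.

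The substance of the proof is the first bullet: $Q(v(E))\ge 0$ for every semistable $E$. First I would reduce to pure-dimension-one sheaves: a semistable object is either a zero-dimensional sheaf, where $v(E)=(\chi(E),0,\dots,0)$ and $Q$ is readily seen to be $\ge 0$ (in fact $=0$), or a pure one-dimensional sheaf. For a pure one-dimensional sheaf supported on a single reduced component $\Theta_i$ one computes $Q$ explicitly — here the self-intersection $\Theta_i^2=-2$ on $S$ enters and $Q$ turns out to be a nonnegative expression in the rank and degree along $\Theta_i$. For a general pure one-dimensional $E$ I would argue by induction on the number of components in its support, using the two exact sequences displayed before the proposition: $0\to E^{C_i}\to E\to E_{C_i}\to 0$ lets one peel off one component, and the torsion sheaf $T$ in $0\to E\to\bigoplus E_{C_i}\to T\to 0$ is supported on $\Theta^{\mathrm{sing}}$, so $v(T)$ is a multiple of $[\oO_x]$. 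The key point is that semistability of $E$ constrains the phases (hence the classes) of $E^{C_i}$ and $E_{C_i}$, and $Q$, being essentially a sum of the intersection form (negative semidefinite, controlled termwise) and a correction involving $\chi$, can be estimated on $v(E)$ from the corresponding estimates on the sub/quotient together with the phase inequality $\phi(E^{C_i})\le\phi(E)\le\phi(E_{C_i})$.

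Concretely, I expect to write $Q$ in coordinates as something like $Q(\chi,r_1,\dots,r_n) = (\text{intersection form in the }r_i) + (\text{a term that is a perfect square or manifestly }\ge 0\text{ in }\chi\text{ and the }r_i)$, and then the semistability input is exactly what is needed to kill the sign-indefinite cross terms. Alternatively — and this may be cleaner — I would push everything forward to $S$ and use that $i_*E$ is a pure sheaf on the surface $S$ whose Chern character has fiber class component equal to $\sum r_i\Theta_i$ (a nef, in fact semiample, class), so that the Hodge index theorem on $S$ gives $\ch_1(i_*E)^2\le 0$ with a controlled defect, which translates into the desired bound on $Q$; semistability on $C$ then feeds in as a bound relating the ``rank'' $\sum r_i$ and the Euler characteristic.

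The main obstacle will be the inductive/estimation step for sheaves supported on several components: making precise how the phase inequalities from semistability translate into the inequality $Q(v(E))\ge 0$, since $Q$ is not additive in short exact sequences and the cross terms in the intersection form between different components $\Theta_i,\Theta_j$ (which are $+1$ when the components meet) have the ``wrong'' sign relative to the diagonal $-2$'s. I would handle this by choosing $Q$ slightly more cleverly than the naive $-\langle-,-\rangle$ — e.g. rescaling the $\chi$-direction or adding a small positive multiple of $(\sum r_i)^2$ — so that $Q$ becomes negative definite on $\Ker Z$ with room to spare while remaining nonnegative on the semistable cone; the deformation theorem (Theorem \ref{thm:deformation}) only needs \emph{some} such $Q$, so this flexibility is available. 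The remaining steps (the zero-dimensional case, the single-component computation, and the negative-definiteness on $\Ker Z$) are routine.
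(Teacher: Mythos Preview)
Your candidate quadratic form cannot work, for a sign reason. You correctly observe that $\langle-,-\rangle$ is negative semidefinite with radical $V_0$ and that $\Ker Z\cap V_0=0$ when the $z_i\in\bbH$; but the conclusion this yields is that $\langle-,-\rangle$ --- not $-\langle-,-\rangle$ --- is negative definite on $\Ker Z$. Your $Q=-\langle-,-\rangle=\chi_S(i_*-,i_*-)$ is \emph{positive} semidefinite on all of $G(C)\otimes\bbR$, so it can never be negative definite on the $(n-1)$-dimensional subspace $\Ker Z$; with this choice the condition $Q(v(E))\ge 0$ holds trivially for \emph{every} class, and your induction on components is proving a tautology while the kernel condition fails. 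Switching the sign to $Q=\langle-,-\rangle$ repairs the kernel condition but now $Q\le 0$ identically, so $Q(v(E))\ge 0$ would force $\langle E,E\rangle=0$, already false for $E=\oO_{\Theta_i}(k)$. No scalar multiple of $\langle-,-\rangle$ alone can satisfy both bullets, and it is not clear your suggested tweaks (rescaling $\chi$, adding $(\sum r_i)^2$) do either. The paper's form is $Q(v)=\langle v,v\rangle+\tfrac{2}{M^2}\,|Z(v)|^2$ with $M=\inf\{|Z(\delta)|:\langle\delta,\delta\rangle=-2\}>0$: on $\Ker Z$ the second term vanishes and one is left with the negative-definite $\langle-,-\rangle$, while on any class with $\langle v,v\rangle\in\{0,-2\}$ the $|Z|^2$ term compensates.

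The paper's route to $Q(v(E))\ge 0$ is also entirely different from your induction on components. It first shows $\langle E,E\rangle\ge -2$ for every \emph{stable} $E$ by a two-line homological argument having nothing to do with the decomposition of $E$ along the $C_i$: the triangle $E[1]\to i^*i_*E\to E$ together with Serre duality on $S$ gives $\Hom_S(i_*E,i_*E)\cong\Hom_C(E,E)\cong\bbC$ and $\Hom_S(i_*E,i_*E[2])\cong\bbC$, whence $-\chi_S(i_*E,i_*E)\ge -2$. The passage from stable to semistable is then Lemma~A.6 of \cite{BMS:16}, which needs only that every slice $\pP(\phi)$ be of finite length so that Jordan--H\"older filtrations exist. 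The actual content of the paper's proof (Proposition~\ref{section61}) is precisely this finite-length check: for $0<\phi<1$ any proper inclusion in $\pP(\phi)$ must strictly drop some $\rk_i$, and $\pP(1)$ consists of torsion sheaves. The exact sequences $0\to E^{C_i}\to E\to E_{C_i}\to 0$, the phase inequalities, and the cross-term obstacle you identify never enter.
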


Let $V(C)$ denote the subset 
\[
\{(Z,\Coh(C))\mid Z = -\chi(E) + z_1\rk_1E + \cdots+ z_n\rk_nE ,z_1,\ldots,z_n \in \bbH \} \subset \Stab(C).
\]
Since $V(C)$ is isomorphic to $\bbH^n$, $V(C)$ is connected.
We write $\Stab^{\dagger}(C)$ for a connected component containing the set $V(C)$.
This proposition characterizes all the stability conditions constructed above.

\begin{prop}\label{prop:V(C)}
Let $\sigma=(Z, \pP) \in Stab(C)$ be a stability condition.
Then $\sigma$ lies in $V(C)$ if and only if all skyscraper sheaves $\oO_x$ are stable of phase one and $Z(\oO_x) =-1$.
\end{prop}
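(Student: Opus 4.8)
The plan is to prove both implications, the forward one being essentially a bookkeeping check and the reverse one requiring a genuine argument that the t-structure is forced to be the standard one. For the ``only if'' direction, suppose $\sigma = (Z,\pP) \in V(C)$, so by definition $\pP$ is the slicing attached to $(Z,\Coh(C))$ with $Z(E) = -\chi(E) + z_1\rk_1 E + \cdots + z_n \rk_n E$ and $z_i \in \bbH$. Since a skyscraper sheaf $\oO_x$ has $\rk_i(\oO_x) = 0$ for all $i$ and $\chi(\oO_x) = 1$, we get $Z(\oO_x) = -1$, which has phase one. To see $\oO_x$ is stable, I would note that any nonzero proper subobject $A \subset \oO_x$ in $\Coh(C)$ is a subsheaf of a length-one skyscraper, hence is zero or all of $\oO_x$; so $\oO_x$ is a simple object of the heart and is automatically stable. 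Its phase equals one because $Z(\oO_x) = -1 \in \bbR_{<0}$.

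For the ``if'' direction, assume all $\oO_x$ are $\sigma$-stable of phase one with $Z(\oO_x) = -1$. I want to conclude $\pP$ is the standard slicing and $Z$ has the asserted form. The key step is to show the heart $\aA = \pP((0,1])$ equals $\Coh(C)$. The standard strategy (as in Bridgeland's treatment of curves and Macrì's work) is: first, since $\sigma$ lies in the connected component $\Stab^{\dagger}(C)$, and $V(C)$ is a connected subset of that component, one should relate the heart of $\sigma$ to $\Coh(C)$ via the fact that the skyscrapers are stable of phase one for both. Concretely, because $\oO_x \in \pP(1)$ for every closed point $x$, and these are the ``point objects'', I would argue that every sheaf $E \in \Coh(C)$ lies in $\pP((0,1])$: a torsion sheaf is an iterated extension of skyscrapers hence in $\pP(1) \subset \aA$; for a pure-dimension-one sheaf one uses the HN filtration of $E$ with respect to $\sigma$ together with the exact sequences relating $E$ to the $E_{C_i}$ and the vanishing $\Hom(\oO_x, E) $-type constraints to show $E \in \aA$. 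Then $\Coh(C) \subset \aA$, and since both are hearts of bounded t-structures on $\dD^b(C)$ and one contains the other, they coincide. Once $\aA = \Coh(C)$, the central charge $Z\colon G(C) \to \bbC$ is a group homomorphism; evaluating on the generators $[\oO_{\Theta_i}]$ and $[\oO_x]$ from the Mumford--Polishchuk proposition, and using $Z(\oO_x) = -1 = -\chi(\oO_x)$, I write $Z(E) = -\chi(E) + z_1 \rk_1 E + \cdots + z_n \rk_n E$ with $z_i := Z(\oO_{\Theta_i}) + \chi(\oO_{\Theta_i})$. Finally, the stability-function condition $Z(\oO_{\Theta_i}) \in \bbH \cup \bbR_{<0}$ applied to the sheaves $\oO_{\Theta_i}(m)$ for varying $m$ (whose classes are $[\oO_{\Theta_i}] + m[\oO_x]$, so $Z(\oO_{\Theta_i}(m)) = z_i - (m+1)$) forces $\Imm(z_i) > 0$, i.e.\ $z_i \in \bbH$, giving $\sigma \in V(C)$.

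The main obstacle I expect is the step showing $\Coh(C) \subseteq \aA = \pP((0,1])$ — i.e.\ ruling out that $\sigma$ is a ``tilt'' of the standard t-structure in which some sheaves acquire cohomology in degree $0$ and $-1$ or $1$. For a smooth curve this is handled by the classification of torsion pairs / the fact that stable point objects pin down the t-structure, but on the reducible curve $C$ one has the subtlety of torsion-free sheaves supported on proper subcurves and of non-reduced structure in the $({}_m I_N)$ case. I would address this by using that skyscrapers of phase exactly one are both simple in $\aA$ and, being all of phase $1$, cannot be destabilized; then for any $E \in \Coh(C)$ the HN factors of $E$ (with respect to $\sigma$) must all have phases in $(0,1]$ because $\Hom(\oO_x, E)$ and $\Hom(E,\oO_x)$ constrain $\phi^+_\sigma(E)$ and $\phi^-_\sigma(E)$ — any HN factor of phase $> 1$ would receive or admit a map forcing a contradiction with $\oO_x \in \pP(1)$ being a point object, and similarly phase $\le 0$ is excluded. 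Making this precise (the notion of ``point object'' and Bridgeland's lemma that they force the heart) is where the real work lies; everything else is linear algebra on $G(C) \cong \bbZ^{n+1}$.
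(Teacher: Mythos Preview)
Your forward direction is fine, and your endgame (once $\pP((0,1]) = \Coh(C)$, read off $z_i$ and force $z_i \in \bbH$ by varying $m$ in $\oO_{\Theta_i}(m)$) is correct. The problem is the central step: you propose to prove $\Coh(C) \subset \pP((0,1])$ by bounding the HN phases of an arbitrary sheaf using only $\Hom$-constraints with skyscrapers. This does not go through as stated. From $\Hom(E,\oO_x)\neq 0$ you get $\phi^-_\sigma(E)\leq 1$, but you need $\phi^-_\sigma(E)>0$; and nothing you wrote gives $\phi^+_\sigma(E)\leq 1$. Knowing that $\oO_x$ is stable of phase one only yields vanishings of the form $\Hom(E,\oO_x[i])=0$ for $i<0$ and $\Hom(\oO_x[k],E)=0$ for $k>0$, which are automatic for sheaves and do not constrain the extremal phases further. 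The vague appeal to ``point objects'' and to $\sigma$ lying in $\Stab^{\dagger}(C)$ does not help either: the proposition is stated for arbitrary $\sigma\in\Stab(C)$, so connectedness cannot be invoked.

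The paper goes in the opposite direction, which is the tractable one: it shows $\pP((0,1])\subset\Coh(C)$ by taking a $\sigma$-stable $E$ of phase $\phi\in(0,1]$ and proving it is a sheaf. Stability against $\oO_x$ gives $\Hom(E,\oO_x[i])=0$ for $i\leq -1$, forcing $\hH^i(E)=0$ for $i\geq 1$. For the lower bound on cohomology the key input you are missing is Serre duality at a \emph{smooth} point $x\in C\setminus\Theta^{\mathrm{sing}}$: there the Cartier divisor $D=x$ has $\oO_D$ perfect, so $\Hom(E,\oO_D[j])\cong\Hom(\oO_D,E[1-j])^{*}$, and stability gives vanishing for $j\geq 1$. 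This forces $\hH^i(E)$ for $i\leq -1$ to be supported on the finite set $\Theta^{\mathrm{sing}}$, and then a nonzero map $\oO_x[-m]\to E$ from such a torsion sheaf contradicts stability. Once every stable object in $\pP((0,1])$ is a sheaf, the inclusion of hearts gives equality, and your final paragraph applies verbatim. So the missing idea is precisely the Serre-duality step at smooth points; without it, the inclusion you chose cannot be closed.
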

\begin{proof}
First, we show that all stable objects in $\pP((0,1])$ are coherent sheaves.
Suppose that $E$ is a stable object of phase $\phi$ for some $0<\phi$.
Since all skyscraper sheaves $\oO_x$ are stable of phase one, we have $\Hom(E,\oO_x[i]) =0 $ for $i\leq -1$. 
Write $m := \max \{i\mid \hH^i(E) \not =0\}$. Here, $\hH^i(E)$ denotes the $i$-th cohomology sheaf of  $E$.
There is an object $E^{<m}$ and a distinguished triangle
\[
E^{<m}\rightarrow E \rightarrow \hH^m(E)[-m] \rightarrow E^{<m}[1].
\]
Since $\Hom(E^{<m}[1],\oO_x[-m]) =0$, and if $\Hom(\hH^m(E),\oO_x) \not=0$, then  $\Hom(E,\oO_x[-m]) \not=0$. Thus, the cohomology sheaf $\hH^i(E)$ vanishes for $i \geq 1$.

Next, let $E$ be a stable object of phase $\phi$ with $\phi<1$. 
Take $\oO_D$ be a structure sheaf of a divisor $D$ corresponding to a point $x\in C\backslash \Theta^{\mathrm{sing}}$.
Since the structure sheaf $\oO_D$ of $x$ can be obtained from extensions of the skyscraper sheaf $\oO_x$, 
$\oO_D$ lies in $\pP((0,1])$, and $\sigma$-semistable of phase one.
Thus, we have $\Hom(\oO_D,E[i])$ for $i\leq0$. 
Since the local ring $\oO_D$ is a perfect object for each $x$ $\not \in  \Theta^{\mathrm{sing}}$.
Serre duality gives
\[
\Hom(E,\oO_D[j]) = \Hom(\oO_D,E[1-j]) = 0 
\]for $j\geq 1$.
Applying a similar argument as above, we conclude that the cohomology sheaf $\hH^i(E)$ is supported on $\Theta^{\mathrm{sing}}$ for $i\leq -1$.
 Assume $\hH^i(E)$ is non-zero for some $i\leq-1$. Define $m := \min \{i\mid \hH^i(E) \not =0\}\leq-1$. 
 Since $\hH^m(E)$ is a torsion sheaf, there is a non-zero morphism $\oO_x[-m] \rightarrow E$ for some $x\in \Theta^{\mathrm{sing}}$. This contradicts the fact that $E$ is stable. Therefore, we have $\hH^i(E)=0$ for $i\leq-1$. 

If $E$ is stable of phase one and $E$ is not a skyscraper sheaf, there cannot exist any non-zero morphisms $E \rightarrow \oO_x$ or $\oO_x \rightarrow E$. It follows from the same argument that the cohomology sheaf $\hH^i(E)$ vanishes for $i\leq-1$.

Therefore, we have $\pP((0,1]) \subset \Coh(C)$. Since both $\pP((0,1])$ and $\Coh(C)$ are the hearts of bounded t-structures, we have $\pP((0,1]) = \Coh(C)$. Consider sheaves $\oO_{\Theta_1}(-1),\ldots,\oO_{\Theta_n}(-1)$, the central charge $Z$ is defined by 
\[
Z(E) = -\chi(E) + z_1\rk_1(E) + \cdots + z_n\rk_n(E),
\]
where $z_i = Z(\oO_{\Theta_i}(-1)) \in \bbH$. Thus, $(Z,\pP)$ lies in $V(C)$.

\end{proof}

\begin{defi}
Let $U(C) \subset \Stab^{\dagger}(C)$ be the open subset consisting of stability conditions $\sigma = (Z,\pP) \in \Stab^{\dagger}(C)$ such that all skyscraper sheaves $\oO_x$ are stable of the same phase in $\sigma$.
\end{defi}

By the previous proposition and the definition, we have $U(C) = V(C)\cdot \widetilde{\GL}^{+}(2,\bbR)$.

\subsection{Constructing stability conditions in the boundary of $U(C)$}

We give an example of a construction of a stability condition in the boundary of $U(C)$. 
Consider the following central charge 
\begin{align}
Z_{r_1}(E) = -\chi(E) + r_1\rk_1(E)+z_2\rk_2(E)+\dots + z_n \rk_n(E) \label{def:CC}
\end{align}
where $r_1 \in \bbR , z_2,...,z_n \in \bbH$.
We will show that the corresponding t-structure is obtained from $\Coh(C)$ by tilting with respect to a torsion pair.

\begin{defi}
 A torsion pair in an abelian category $\aA$ is a pair of full subcategories $(\tT , \fF)$ of $\aA$ which satisfies the following conditions.
 \begin{itemize}
 \item $\Hom(T,F) =0$ for $T \in \tT,F \in \fF$.
 \item Every object $E \in \aA$ fits into a short sequence
 \[
 0 \rightarrow T \rightarrow E \rightarrow F \rightarrow 0
 \]
 for $T \in \tT ,F \in \fF$.
 \end{itemize}
\end{defi} 

Given a triangulated category $\dD$ and a heart $\aA$, for a torsion pair $(\tT,\fF)$ on $\aA$, there is a new heart of a bounded t-structure defined by
\[
\aA ^{\#} = \{E \in \dD \mid \hH^i(E) = 0 \ \mathrm{for} \  i \not \in \{-1,0\},\hH^{-1}(E) \in \fF,\hH^0(E) \in \tT .\}.
\]
For $\sS \subset \dD(X)$, we denote the extension closure of $S$ by $\langle \sS \rangle_{ex}$.

\begin{lem}
The pair of subcategories
\begin{align*}
&\fF_k = \langle\oO_{\Theta_1}(l)\mid l\leq k \rangle_{ex}\\
&\tT_k = {}^{\perp}\fF_k
\end{align*}
defines a torsion pair in $\Coh(C)$ for any $k \in \bbZ$.
\end{lem}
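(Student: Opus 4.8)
The plan is the following. Since $\tT_k = {}^{\perp}\fF_k$ is defined as a left orthogonal, the orthogonality $\Hom(\tT_k,\fF_k)=0$ holds for free, so the whole content of the lemma is the existence, for every $E\in\Coh(C)$, of a short exact sequence $0\to T\to E\to F\to 0$ with $T\in\tT_k$ and $F\in\fF_k$. First I would record a reduction: since $\Hom(T,-)$ is left exact and every object of $\fF_k=\langle\oO_{\Theta_1}(l)\mid l\le k\rangle_{ex}$ is obtained from the $\oO_{\Theta_1}(l)$ with $l\le k$ by finitely many extensions, an object $T$ lies in $\tT_k$ as soon as $\Hom(T,\oO_{\Theta_1}(l))=0$ for all $l\le k$. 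This is the criterion I will verify.

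Next I would build $(T,F)$ by restriction to the component $\Theta_1$. Let $i\colon\Theta_1\hookrightarrow C$ be the closed immersion of $\Theta_1\cong\bbP^1$. Then $i^{*}E=E\otimes_{\oO_C}\oO_{\Theta_1}$ is a coherent sheaf on $\bbP^1$, and the unit $E\to i_{*}i^{*}E$ is surjective with kernel $I_{\Theta_1}E$, where $I_{\Theta_1}\subset\oO_C$ is the ideal sheaf of $\Theta_1$. Using that every coherent sheaf on $\bbP^1$ splits as a sum of line bundles and a torsion sheaf, write $i^{*}E\cong P_{\le k}\oplus P_{>k}$, where $P_{\le k}$ is the sum of the line bundle summands of degree $\le k$ and $P_{>k}$ collects the torsion part together with the summands of degree $>k$. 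I would then set
\[
F:=\Imm\!\bigl(E\to i_{*}i^{*}E\twoheadrightarrow i_{*}P_{\le k}\bigr),\qquad T:=\Ker\!\bigl(E\to i_{*}P_{\le k}\bigr),
\]
so that $0\to T\to E\to F\to 0$ is exact by construction.

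The verification then splits in two. For $F\in\fF_k$: $F$ is a subsheaf of $i_{*}P_{\le k}$, hence of the form $i_{*}(F')$ with $F'\subseteq P_{\le k}$; a subsheaf of a vector bundle on the smooth curve $\bbP^1$ is again a vector bundle, and passing to a subsheaf cannot increase the maximal degree of a line subbundle, so $F'\cong\bigoplus_l\oO_{\bbP^1}(b_l)$ with all $b_l\le k$, and as a finite direct sum of generators $F\in\fF_k$. For $T\in\tT_k$: since the unit is surjective, $T$ is the preimage in $E$ of $\Ker(i^{*}E\to P_{\le k})=P_{>k}$, giving a short exact sequence $0\to I_{\Theta_1}E\to T\to P_{>k}\to 0$. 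Applying $\Hom(-,\oO_{\Theta_1}(l))$ for $l\le k$ and using left exactness, it suffices to show $\Hom(P_{>k},\oO_{\Theta_1}(l))=0$ and $\Hom(I_{\Theta_1}E,\oO_{\Theta_1}(l))=0$. The first reduces, via $\Hom_C(i_{*}-,i_{*}-)\cong\Hom_{\bbP^1}(-,-)$, to a computation on $\bbP^1$: every line bundle summand of $P_{>k}$ has degree $>k\ge l$ and the torsion part is torsion, so there is no nonzero map to $\oO_{\bbP^1}(l)$. For the second, $I_{\Theta_1}E$ is supported on $\operatorname{Supp}(I_{\Theta_1})\subseteq\Theta_2\cup\cdots\cup\Theta_n$ (as $C$ is reduced and $\Theta_1$ is one of its components), so the image of any map $I_{\Theta_1}E\to\oO_{\Theta_1}(l)$ is supported on the finite set $\Theta_1\cap(\Theta_2\cup\cdots\cup\Theta_n)$, hence is a torsion subsheaf of the torsion-free sheaf $\oO_{\Theta_1}(l)$ and therefore zero. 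Thus $\Hom(T,\oO_{\Theta_1}(l))=0$ for all $l\le k$, i.e.\ $T\in\tT_k$, and the lemma follows.

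The step I expect to be the main obstacle is the multiple-fibre curves ${}_mI_N$, where $\Theta_1$ is only the reduction of the component $C_1=m\Theta_1$. There the local rings of $C$ along $\Theta_1$ are non-reduced, which produces extra classes in $\Ext^1_{\oO_C}(\oO_{\Theta_1},\oO_{\Theta_1})$; consequently $\fF_k$ is strictly larger than the category of sums of line bundles on $\Theta_1$ — it already contains sheaves supported on thickenings of $\Theta_1$ — and the support argument above fails because $I_{\Theta_1}E$ need no longer be supported away from $\Theta_1$. I would treat this case by running the same restriction construction with $i\colon C_1\hookrightarrow C$ in place of $\Theta_1$, replacing the splitting of $\bbP^1$-sheaves into line bundles by the corresponding decomposition of sheaves on the ribbon $m\Theta_1$; the real work is then in checking that the ``degree $>k$'' part remains right orthogonal to all $\oO_{\Theta_1}(l)$ with $l\le k$, which requires careful bookkeeping of slopes along $m\Theta_1$. (Alternatively, one can argue abstractly: $\tT_k$ is closed under quotients and extensions, so by noetherianity of $\Coh(C)$ every $E$ has a largest $\tT_k$-subobject $T$, and one then shows $E/T\in\fF_k$ by the same local analysis.)
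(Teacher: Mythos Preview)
Your one-step construction via the unit $E\to i_*i^*E$ is correct whenever the first component has multiplicity one (so $C_1=\Theta_1$), and in that regime it is cleaner than the paper's argument. However, you underestimate the scope of the obstruction: it is not only the multiple fibres ${}_mI_N$ that cause trouble. The fibres $I_N^*$, $II^*$, $III^*$, $IV^*$ are themselves non-reduced schemes (the multiplicities $a_i$ are the marks on the corresponding affine Dynkin diagram, and exceed $1$ for most components), and the lemma must hold for \emph{every} irreducible component since it is later applied with each $\Theta_i$ in place of $\Theta_1$. Whenever $a_1>1$ the ideal $I_{\Theta_1}$ is no longer supported away from $\Theta_1$, so your extension $0\to I_{\Theta_1}E\to T\to i_*P_{>k}\to 0$ no longer forces $T\in\tT_k$; the claim ``as $C$ is reduced'' is simply false for these types.

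The paper's approach avoids this by working with $C_1$ rather than $\Theta_1$ from the start. One first reduces to a torsion-free sheaf supported on $C_1$ via $0\to E^{C_1}\to E\to E_{C_1}\to 0$ (the kernel is supported on $C_2\cup\cdots\cup C_n$, hence lies in $\tT_k$), and then chooses a finite filtration $0=E_N\subset\cdots\subset E_0=E$ of the resulting sheaf on $C_1$ whose successive quotients $A_j$ are sheaves on the reduced curve $\Theta_1\cong\bbP^1$. Splitting each $A_j=A_j^{\le k}\oplus A_j^{>k}$ and iteratively replacing each $E_{j+1}$ by the preimage of $A_j^{\le k}$ under $E_j\twoheadrightarrow A_j$ produces, via the snake lemma, a new filtration whose successive quotients are the $A_j^{\le k}$; this exhibits a surjection $E\twoheadrightarrow F\in\fF_k$ whose kernel is an iterated extension of the $A_j^{>k}$, hence in $\tT_k$. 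This inductive filtration step is exactly the ``careful bookkeeping'' you allude to, and it is what makes the argument uniform across all fibre types. Your construction buys simplicity when $a_1=1$; the paper's buys uniformity.

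Finally, your alternative abstract argument does not close the gap: noetherianity and closure of $\tT_k$ under quotients and extensions give a torsion pair $(\tT_k,\tT_k^{\perp})$, but the statement to prove is that $\tT_k^{\perp}=\fF_k$, and only the inclusion $\fF_k\subseteq\tT_k^{\perp}$ is automatic. Showing that every object with no nonzero $\tT_k$-subobject actually lies in the extension closure $\langle\oO_{\Theta_1}(l)\mid l\le k\rangle_{ex}$ is precisely the local analysis that the filtration argument supplies.
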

\begin{proof}
Let $E$ be a coherent sheaf on $C$. We show there are objects $T\in\tT_k$ and $F \in \fF_k$ such that satisfies a short exact sequence
\[
0 \rightarrow T \rightarrow E \rightarrow F \rightarrow 0.
\]
Since every sheaf in $\fF_k$ is torsion-free, all torsion sheaves lie in $\tT_k$. Hence, we can assume $E$ is torsion-free.
There is a short exact sequence 
\[
0 \rightarrow E^{C_1} \rightarrow E \rightarrow E_{C_1} \rightarrow 0
\]
where we denote $E_{C_1} = (E|_{C_1}/$torsion$)$ and $E^{C_1} = \Ker(E \rightarrow E_{C_1})$.
Since $E^{C_1}$ is supported on $C_2\cup C_3\cup \dots \cup C_n$, $E^{C_1} \in \tT_k$. Therefore, we can also assume $E=E_{C_1}$. 

There exists a sequence
\[
0=E_N \hookrightarrow \cdots \hookrightarrow E_1 \hookrightarrow E_0 = E
\]whose factors $A_i =E_i/E_{i+1}$ are sheaves on $\Theta_1$.
Since $ E_i/E_{i+1}$ is a direct sum of line bundles on $\Theta_1$, there exist sheaves $A_i^{\leq k}\in \fF_k$ and $A_i^{>k} \in \tT_k$ on $\Theta_1$ such that they satisfy
\[
A_i \cong A_i^{\leq k}\oplus A_i^{>k}.
\]
Consider the surjective $E \cong E_1 \rightarrow A_1^{\leq k} $.
There are exact sequences
\[\begin{tikzcd}
	0 & {E_2} & {E_1} & {A_1} & 0 \\
	0 & {E_2^{\prime}} & {E_1} & {A_1^{\leq k}} & 0
	\arrow[from=1-2, to=1-3]
	\arrow[from=1-3, to=1-4]
	\arrow[from=1-1, to=1-2]
	\arrow[from=2-1, to=2-2]
	\arrow[from=2-2, to=2-3]
	\arrow[from=2-3, to=2-4]
	\arrow[from=2-4, to=2-5]
	\arrow[from=1-2, to=2-2]
	\arrow[from=1-3, to=2-3]
	\arrow[from=1-4, to=1-5]
	\arrow[from=1-4, to=2-4]
\end{tikzcd}\]where $E_2^{\prime}$ is the kernel.

By the snake lemma, $E_2^{\prime}$ is a extension of $E_2$ and $A_1^{>k}$.
Repeating this argument, we have a sequence
\[
0=E^{\prime}_N \hookrightarrow \cdots \hookrightarrow E^{\prime}_1 \hookrightarrow E_0 = E
\]whose factors $E^{\prime}_i/E^{\prime}_{i+1}$ are isomorphic to $A_i^{\leq k}$ on $\Theta_1$.
Here, the subsheaf $E^{\prime}_i$ is obtained from an extension $E_i$ and $A_1^{> k}\ldots A_i^{>k}$.
We have the inclusion $E^{\prime}_N\hookrightarrow E$. Its cokernel is obtained from extensions of $A_1^{\leq k},\ldots,A_N^{\leq k} $, and the lemma follows.

\end{proof}
The proof above give more, namely any torsion-free sheaves in $\tT_k$ which supported on $C_1$ lies in the subcategory $\langle\oO_{\Theta_1}(l) \mid l>k\rangle_{ex}$.
We will denote by $\aA_k$ the heart of the t-structure obtained from tilting with respect to the torsion pair $(\tT_k,\fF_k)$.

\begin{prop}
The group morphism $Z_{r_1} \colon G(C) \rightarrow \bbC$ defines a stability function on $\aA_k$ if $k -1< r_1< k $.
\end{prop}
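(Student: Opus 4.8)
The plan is to check the defining inequality of a stability function directly, namely that $Z_{r_1}(E)\in\bbH\cup\bbR_{<0}$ for every nonzero $E\in\aA_k$. Since $\aA_k$ is obtained from $\Coh(C)$ by tilting at the torsion pair $(\tT_k,\fF_k)$, every $E\in\aA_k$ fits into a short exact sequence in $\aA_k$
\[
0\to \hH^{-1}(E)[1]\to E\to \hH^0(E)\to 0,\qquad \hH^0(E)\in\tT_k,\ \hH^{-1}(E)\in\fF_k .
\]
Because $Z_{r_1}$ is additive, $Z_{r_1}(F[1])=-Z_{r_1}(F)$, and $\bbH\cup\bbR_{<0}\cup\{0\}$ is a sub-semigroup of $(\bbC,+)$ whose only invertible element is $0$, it is enough to prove: (i) $Z_{r_1}(T)\in\bbH\cup\bbR_{<0}$ for every nonzero $T\in\tT_k$; and (ii) $-Z_{r_1}(G)\in\bbH\cup\bbR_{<0}$ for every nonzero $G\in\fF_k$. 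Granting these, $Z_{r_1}(E)=Z_{r_1}(\hH^0(E))-Z_{r_1}(\hH^{-1}(E))$ is a sum of two elements of $\bbH\cup\bbR_{<0}\cup\{0\}$ that are not both zero, hence lies in $\bbH\cup\bbR_{<0}$.

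The one computation underlying everything is that, on the reduced component $\Theta_1\cong\bbP^1$, one has $\chi(\oO_{\Theta_1}(l))=l+1$, $\rk_1\oO_{\Theta_1}(l)=1$ and $\rk_i\oO_{\Theta_1}(l)=0$ for $i\ge2$, so that
\[
Z_{r_1}(\oO_{\Theta_1}(l))=r_1-l-1\in\bbR .
\]
For (ii): by definition $G$ is an iterated extension of the line bundles $\oO_{\Theta_1}(l)$ generating $\fF_k$, and the lower bound on $r_1$ is exactly what makes $r_1-l-1>0$ for each such generator; by additivity $-Z_{r_1}(G)$ is then a sum of negative real numbers, so $-Z_{r_1}(G)\in\bbR_{<0}$.

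For (i): let $0\ne T\in\tT_k\subset\Coh(C)$. If $T_{C_i}\ne0$ for some $i\ge2$, then $\rk_i(T)=\rk_i(T_{C_i})>0$, so $\Imm Z_{r_1}(T)=\sum_{i\ge2}\Imm(z_i)\,\rk_i(T)>0$ and $Z_{r_1}(T)\in\bbH$. Otherwise $T$ is supported on $C_1$ (up to a $0$-dimensional subsheaf), and from $0\to T_{\mathrm{tors}}\to T\to T_{C_1}\to0$ with $T_{\mathrm{tors}}$ the torsion subsheaf we get $Z_{r_1}(T)=-\chi(T_{\mathrm{tors}})+Z_{r_1}(T_{C_1})$ with $\chi(T_{\mathrm{tors}})\ge0$. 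Here $T_{C_1}$ is torsion-free on $C_1$ and a quotient of $T\in\tT_k$, hence lies in $\tT_k$, so by the remark following the previous lemma it is an iterated extension of line bundles $\oO_{\Theta_1}(l)$ on which $Z_{r_1}(\oO_{\Theta_1}(l))=r_1-l-1<0$ — this is where the upper bound on $r_1$ enters. Thus $Z_{r_1}(T)\in\bbR_{\le0}$, and it vanishes only if $T_{\mathrm{tors}}=0$ and $T_{C_1}=0$, i.e.\ only if $T=0$, which proves (i).

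Once the short exact sequence for the tilted heart is used to split the problem into $\tT_k$ and $\fF_k$, the remaining argument is formal: it rests only on additivity of $Z_{r_1}$, the identity $Z_{r_1}(\oO_{\Theta_1}(l))=r_1-l-1$, and the structural description (the remark) of torsion-free sheaves on $C_1$ in $\tT_k$ as extensions of such line bundles; the interval $k-1<r_1<k$ is precisely the locus where $r_1-l-1$ is negative on all the $\oO_{\Theta_1}(l)$ generating the $C_1$-part of $\tT_k$ and positive on all the $\oO_{\Theta_1}(l)$ generating $\fF_k$. The step needing the most care is the bookkeeping in (i): checking that torsion sheaves, the components $T_{C_i}$ with $i\ge2$, and the $C_1$-part each land on the correct side, and that no cancellation of central charges occurs, the latter being guaranteed by the sharpness of the semigroup $\bbH\cup\bbR_{<0}\cup\{0\}$.
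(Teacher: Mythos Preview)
Your proof is essentially identical to the paper's: both reduce via the tilt decomposition $\aA_k=\langle\fF_k[1],\tT_k\rangle_{ex}$ to showing $Z_{r_1}(T)\in\bbH\cup\bbR_{<0}$ for $T\in\tT_k$ and $Z_{r_1}(F)\in\bbR_{>0}$ for $F\in\fF_k$, handle $\fF_k$ directly from its generators $\oO_{\Theta_1}(l)$, and handle the $C_1$-supported part of $\tT_k$ using the remark after the preceding lemma that such sheaves lie in $\langle\oO_{\Theta_1}(l)\mid l>k\rangle_{ex}$; you give a bit more detail on the case split in $\tT_k$, but the logic is the same. One shared numerical slip worth flagging: with $Z_{r_1}(\oO_{\Theta_1}(l))=r_1-l-1$, positivity for all $l\le k$ actually requires $r_1>k+1$ (and negativity for all $l>k$ requires $r_1<k+2$), so the interval in the statement should read $k+1<r_1<k+2$; this indexing issue is already present in the paper and your argument faithfully reproduces it rather than introducing a new gap.
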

\begin{proof}
Since $\aA_k = \langle \fF_k[1],\tT_k \rangle_{ex}$, we need to show the following two conditions:
\begin{enumerate}
\item For each object, $T \in \tT_k$, $Z(T) \in \bbH\cup\bbR_{<0}$.
\item  For each object, $F \in \fF_k$, $Z(F) \in\bbR_{>0}$.
\end{enumerate} 
Let $T$ be a sheaf in $\tT_k$.
Assume $\rk_iT = 0$ for $i = 2,3,\ldots ,n$ and $\rk_1(T) >0$.
Since each torsion sheaf $E$ satisfies $Z(E)\in \bbR_{<0}$, we can also assume $T$ is a pure one-dimension sheaf.
There is an exact sequence 
\[
0 \rightarrow T^{C_1} \rightarrow T \rightarrow T_{C_1} \rightarrow 0.
\]
Since $T^{C_1}$ is a torsion sheaf, $T$ is isomorphic to $T_{C_1}$. 
Thus, $T$ can be obtained from extensions of $\oO_{\Theta_1}(l)$ for some $l>k$. We have $Z(T)<0$.

Suppose that $F$ is a sheaf in $\fF_k$. Since $F$ is a sheaf obtained from extensions of line bundles $\oO_{\Theta_1}(l)$ for $l\leq k$, we have $Z(F)>0$. 
\end{proof}

\begin{prop}
The pair $(Z_{r_1},\aA_k)$ satisfies the HN property, if $r_1$ is a rational number with $k -1< r_1< k$ and $\  z_2,\ldots,z_n \in \bbQ \oplus i \bbQ_{>0}$
In particular, $(Z_{r_1},\aA_k)$ defines a pre-stability condition in the boundary $\partial U(C)$.
\end{prop}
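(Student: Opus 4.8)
I would verify the Harder--Narasimhan property using the criterion of \cite[Theorem 2.4.]{Bri:07}, exactly as it was applied above to $(Z,\Coh(C))$: it suffices to exclude an infinite chain of proper subobjects $\cdots\subset E_{j+1}\subset E_j\subset\cdots\subset E_1$ in $\aA_k$ with $\phi(E_{j+1})>\phi(E_j)$ for all $j$ (condition (1)), and an infinite chain of proper quotients $E_1\twoheadrightarrow E_2\twoheadrightarrow\cdots$ in $\aA_k$ with $\phi(E_j)>\phi(E_{j+1})$ for all $j$ (condition (2)). The hypotheses enter through two facts. First, $r_1\in\bbQ$ and $z_2,\dots,z_n\in\bbQ\oplus i\bbQ_{>0}$ make $Z_{r_1}(G(C))=\bbZ\cdot(-1)+\bbZ\cdot r_1+\sum_{i\ge2}\bbZ\cdot z_i$ a subgroup of $\tfrac1N(\bbZ\oplus i\bbZ)$ for some $N$, hence discrete. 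Second, every $E\in\aA_k$ has $\hH^{-1}(E)\in\fF_k$ (supported on $C_1$, so with $\rk_i=0$ for $i\ge2$) and $\hH^0(E)\in\tT_k$, whence $\rk_i(E)=\rk_i(\hH^0(E))\ge0$ for $i\ge2$ and $\Imm Z_{r_1}(E)=\sum_{i\ge2}\Imm(z_i)\,\rk_i(E)\ge0$, with equality precisely when $\rk_i(E)=0$ for every $i\ge2$; in that last case $Z_{r_1}(E)=-\chi(E)+r_1\rk_1(E)\in\bbR$, so, $Z_{r_1}$ being a stability function on $\aA_k$, we get $Z_{r_1}(E)\in\bbR_{<0}$ whenever $E\ne0$, and therefore $\Ree Z_{r_1}(E)\le-1/N$.

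For condition (1): along such a chain $\Imm Z_{r_1}(E_j)$ is non-negative and non-increasing, hence eventually constant by discreteness, say equal to $c$ for $j\ge j_0$. For $j\ge j_0$ the non-zero quotient $E_j/E_{j+1}$ has $\Imm Z_{r_1}=0$, so $\Ree Z_{r_1}(E_j/E_{j+1})<0$, giving $\Ree Z_{r_1}(E_{j+1})>\Ree Z_{r_1}(E_j)$ by additivity; thus $\Ree Z_{r_1}(E_j)$ strictly increases in $j$. If $c>0$, then $Z_{r_1}(E_j)$ and $Z_{r_1}(E_{j+1})$ share the positive imaginary part $c$, so $\phi(E_{j+1})>\phi(E_j)$ forces $\Ree Z_{r_1}(E_{j+1})<\Ree Z_{r_1}(E_j)$, a contradiction; if $c=0$, every $E_j$ with $j\ge j_0$ has phase $1$, contradicting $\phi(E_{j+1})>\phi(E_j)$. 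So no such chain exists.

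Condition (2) is the main obstacle. The first steps parallel (1): $\Imm Z_{r_1}(E_j)$ is non-increasing, eventually equal to some $c$, and for $j\ge j_0$ the non-zero kernels $K_j=\ker(E_j\to E_{j+1})$ have $\Imm Z_{r_1}(K_j)=0$, hence $\Ree Z_{r_1}(K_j)\le-1/N$, so $\Ree Z_{r_1}(E_{j+1})\ge\Ree Z_{r_1}(E_j)+1/N$; the case $c=0$ again forces all $E_j$ with $j\ge j_0$ to have phase $1$. The difficulty is the case $c>0$: now $\Ree Z_{r_1}(E_j)$ increasing is consistent with $\phi(E_j)$ decreasing, so the contradiction must be extracted from $\Ree Z_{r_1}(E_j)\to+\infty$. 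The plan is to pass to the \emph{ascending} chain of subobjects $L_j:=\ker(E_{j_0}\twoheadrightarrow E_j)\subset E_{j_0}$ (for $j\ge j_0$): here $Z_{r_1}(L_j)=Z_{r_1}(E_{j_0})-Z_{r_1}(E_j)\in\bbR_{<0}$ (both have imaginary part $c$) with $\Ree Z_{r_1}(L_j)\to-\infty$, and one must show such a chain cannot exist. Analysing the cohomology sheaves of $L_j$ through the long exact sequence of $L_j\hookrightarrow E_{j_0}$ in the tilted heart gives $\hH^{-1}(L_j)\subset\hH^{-1}(E_{j_0})$ and exhibits $\hH^0(L_j)$ as an extension of a subsheaf of $\hH^0(E_{j_0})$ by a quotient of $\hH^{-1}(E_j)\in\fF_k$; combining this with the facts that $\chi$ and the ranks are bounded on subsheaves of the fixed sheaves $\hH^{-1}(E_{j_0})$ and $\hH^0(E_{j_0})$ on the projective curve $C$ (Grothendieck boundedness), that objects of $\fF_k$ are built from line bundles of degree $\le k$ on $C_1$, and that $Z_{r_1}(\fF_k\setminus0)\subset\bbR_{>0}$, one bounds $\Ree Z_{r_1}(L_j)$ from below --- contradicting $\Ree Z_{r_1}(L_j)\to-\infty$. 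I expect the careful bookkeeping of which parts of $E_j$, $L_j$ and the intermediate kernels are honest sheaves and which carry a shifted $\fF_k$-summand to be the technical crux, and it is precisely here that the rationality of $r_1$ and of the $\Ree z_i$ is used, converting ``bounded below'' into ``attained only finitely often''.

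Finally, granting HN, $(Z_{r_1},\aA_k)$ is a pre-stability condition, since $\aA_k$ is the heart of a bounded t-structure, $Z_{r_1}$ is a stability function on it factoring through $v\colon G(C)\to\bbZ^{n+1}$, and HN holds. It lies in $\partial U(C)$: replacing the real number $r_1$ by nearby points of $\bbH$ produces stability conditions in $V(C)\subset U(C)$ converging to $(Z_{r_1},\aA_k)$, while with $r_1\in\bbR$ the line bundles $\oO_{\Theta_1}(l)$ on which $\Ree Z_{r_1}$ is positive are forced out of the heart, so the skyscraper sheaves are no longer all stable of a common phase and $(Z_{r_1},\aA_k)\notin U(C)$.
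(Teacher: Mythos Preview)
Your treatment of condition (1) is correct and coincides with the paper's argument.

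For condition (2) you take a genuinely different route, and you leave it incomplete. The paper's argument is much shorter: from the short exact sequences $0\to K_j\to E_j\to E_{j+1}\to 0$ one takes the long exact sequence of cohomology sheaves in $\Coh(C)$ and obtains a chain of \emph{surjections} $\hH^0(E_1)\twoheadrightarrow\hH^0(E_2)\twoheadrightarrow\cdots$. Since $\Coh(C)$ is Noetherian, this stabilises, so after reindexing one may assume $\hH^0(E_j)$ is constant. Then discreteness of the image of $Z_{r_1}$ gives $\Ree Z_{r_1}(E_j)\to+\infty$, hence $Z_{r_1}(\hH^{-1}(E_j))=Z_{r_1}(\hH^0(E_j))-Z_{r_1}(E_j)\to-\infty$; but $\hH^{-1}(E_j)\in\fF_k$ has $Z_{r_1}>0$, a contradiction. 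That is the whole argument.

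Your approach via $L_j=\ker(E_{j_0}\twoheadrightarrow E_j)$ can be salvaged, but not quite by the bookkeeping you describe. Unwinding the long exact sequence gives
\[
Z_{r_1}(L_j)=Z_{r_1}(\hH^{-1}(E_j))-Z_{r_1}(\hH^{-1}(E_{j_0}))+Z_{r_1}(S_j),
\]
where $S_j=\ker\bigl(\hH^0(E_{j_0})\twoheadrightarrow\hH^0(E_j)\bigr)$. The first term is positive since $\hH^{-1}(E_j)\in\fF_k$; the second is a constant; and $S_j$ is a subsheaf of the fixed sheaf $\hH^0(E_{j_0})$, so $\chi(S_j)\le h^0(\hH^0(E_{j_0}))$ and all ranks are bounded, giving a uniform lower bound for $Z_{r_1}(S_j)$. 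This yields the desired lower bound on $Z_{r_1}(L_j)$, contradicting $Z_{r_1}(L_j)\to-\infty$. Note that the unbounded piece you were worried about, the quotient $Q_j$ of $\hH^{-1}(E_j)$, cancels in this computation; the contradiction again rests on $Z_{r_1}|_{\fF_k}>0$, exactly as in the paper, and no further ``Grothendieck boundedness'' is needed. So your sketch is recoverable but circuitous compared with the paper's direct use of Noetherianity on $\hH^0$.

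A small misattribution: the rationality hypothesis is used only to make the image of $Z_{r_1}$ discrete (so that $\Ree Z_{r_1}(E_j)$ actually diverges), which you already invoked; it plays no role in the final boundedness step.
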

\begin{proof}
It suffices to show two conditions about sequences of monomorphisms and sequences of epimorphisms. 
Suppose, for contradiction, that there is a chain of sub-objects in $\aA_k$
\[
\dots \subset E_{j+1}\subset E_{j} \subset \dots \subset E_1 = E
\]
with $\phi(E_{j+1}) > \phi (E_j) $ for all $j$. There are short exact sequences 
\[
0 \rightarrow E_{j+1} \rightarrow E_j \rightarrow F_j \rightarrow 0
\]
in $\aA$. 
Since $\Imm(Z(A))$ is non-negative for any objects $A \in \aA_k$. 
We have $\Imm Z(E_{j+1}) \leq \Imm Z(E_j)$. 
Since the image of $Z$ is discrete, $\Imm Z(E_j)$ is constant for large enough $j$. Therefore, we can assume $\Imm Z(E_j)$ is constant for all $j$. 
Then, we have $\Imm Z(F_j) =0$, so $\Ree Z(F_j) \leq 0$, which contradicts $\Ree Z(E_j) > \Ree Z(E_{j+1})$.

Next, Suppose we have a chain of quotients in $\aA_k$
\[
E_1 \twoheadrightarrow E_2 \twoheadrightarrow \cdots E_j \twoheadrightarrow E_{j+1}\twoheadrightarrow
\]
with $\phi(E_j) > \phi(E_{j+1})$ for all $j$. 
Suppose this chain does not stabilize.
Taking long exact sequences in cohomology sheaves, we obtain the following chain
\[
\hH^0(E_1) \twoheadrightarrow \hH^0(E_2) \twoheadrightarrow \cdots \hH^0(E_j) \twoheadrightarrow \hH^0(E_{j+1})\twoheadrightarrow.
\]
Since $\Coh(C)$ is Noetherian, it follows for enough large $i$
\[
\hH^{0}(E_i) \cong \hH^{0}(E_{i+1}).
\]
Therefore, we can assume $\hH^{0}(E_i)$ and $\Imm Z(E_i)$ are independent of the choice of $i$. 
There is an exact sequence
\[
0 \rightarrow G_i \rightarrow E_i \rightarrow E_{i+1} \rightarrow 0.
\]
By the assumption, $Z(G_i) \in \bbR_{<0}$. Since the image of $Z$ is discrete, $\lim_{i\to \infty} \Ree Z(E_i) = \infty$. However,  $\hH^{0}(E_i)$ is constant, so $\lim_{i\to \infty} \Ree Z(\hH^{-1}(E_i)) = -\infty$. This contradicts the fact that $Z(F)$ is positive for each $F \in \fF_k$ 
 
\end{proof}

We look stable objects of phase one in $(Z_{r_1},\aA_k)$. We denote the slicing corresponding $(Z_{r_1},\aA_k)$ by $\pP_k$.

\begin{prop}\label{prop:stableofphaseone}
Suppose $\sigma = (Z_{r_1},\aA_k)$ is a stability condition constructed as above.
If $E \in \pP_k(1)$ is stable then $E = \oO_x$ for some $x \in C \backslash C_1$, or $E = \oO_{\Theta_1}(k)[1], \oO_{\Theta_1}(k+1)$.
\end{prop}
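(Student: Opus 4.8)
The plan hinges on an elementary remark: a \emph{stable} object $E$ of phase exactly $1$ in a heart $\aA$ of a bounded $t$-structure is automatically a \emph{simple} object of $\aA$. Indeed, if $0 \to A \to E \to B \to 0$ is exact in $\aA$ with $A,B \neq 0$, then stability gives $\phi(A) < 1$, hence $\Imm Z(A) > 0$, hence $\Imm Z(B) = -\Imm Z(A) < 0$, which is impossible for $B \in \aA$. So I would instead classify the simple objects of $\aA_k$ of phase $1$. The first reduction is that such an $E$ is ``supported on $C_1$ up to torsion'': since $\hH^{-1}(E) \in \fF_k$ is set-theoretically supported on $C_1$ while $\hH^{0}(E)$ is an honest sheaf, one has $\rk_i(E) = \rk_i(\hH^{0}(E)) \geq 0$ for all $i \geq 2$, and then $\Imm Z_{r_1}(E) = \sum_{i\geq 2}(\Imm z_i)\,\rk_i(E) = 0$ with all $\Imm z_i > 0$ forces $\rk_i(E) = 0$ for every $i \geq 2$.

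Then I would split on whether $\hH^{-1}(E)$ vanishes. If $\hH^{-1}(E) \neq 0$, the canonical triangle gives a short exact sequence $0 \to \hH^{-1}(E)[1] \to E \to \hH^{0}(E) \to 0$ in $\aA_k$, so simplicity forces $\hH^{0}(E) = 0$ and $E = F[1]$ with $0 \neq F := \hH^{-1}(E) \in \fF_k$. Picking a step $0 \to F' \to F \to \oO_{\Theta_1}(l) \to 0$ of an $\fF_k$-filtration of $F$ (so $F' \in \fF_k$, $l \leq k$) and shifting by $[1]$ gives a short exact sequence in $\aA_k$; simplicity forces $F' = 0$, hence $E = \oO_{\Theta_1}(l)[1]$ with $l \leq k$. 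If in addition $l \leq k-1$, then $\oO_{\Theta_1}(l)$ and $\oO_{\Theta_1}(l+1)$ both lie in $\fF_k$, and rotating the shift of $0 \to \oO_{\Theta_1}(l) \to \oO_{\Theta_1}(l+1) \to \oO_x \to 0$ (for a closed point $x \in \Theta_1$) produces a short exact sequence $0 \to \oO_x \to \oO_{\Theta_1}(l)[1] \to \oO_{\Theta_1}(l+1)[1] \to 0$ in $\aA_k$, contradicting simplicity. Hence $l = k$ and $E = \oO_{\Theta_1}(k)[1]$.

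If instead $\hH^{-1}(E) = 0$, then $E$ is a sheaf lying in $\tT_k$ with $\rk_i(E) = 0$ for $i \geq 2$. The torsion sequence $0 \to E_{\mathrm{tors}} \to E \to \bar E \to 0$ has all three terms in $\tT_k$, hence is exact in $\aA_k$, so by simplicity $E$ is either torsion or torsion-free. If $E$ is torsion, then since $E$ is simple and any simple subsheaf $\oO_x \hookrightarrow E$ is a subobject in $\aA_k$ (its cokernel is again torsion, hence in $\tT_k$), we get $E = \oO_x$; and if $x \in C_1$, rotating $0 \to \oO_{\Theta_1}(k) \to \oO_{\Theta_1}(k+1) \to \oO_x \to 0$ gives a short exact sequence $0 \to \oO_{\Theta_1}(k+1) \to \oO_x \to \oO_{\Theta_1}(k)[1] \to 0$ in $\aA_k$, again contradicting simplicity, so $x \in C \setminus C_1$. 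If $E$ is torsion-free and nonzero, then from $0 \to E \to \bigoplus_i E_{C_i} \to T \to 0$ together with $E_{C_i} = 0$ for $i \geq 2$ (each being torsion-free of generic rank $\rk_i(E) = 0$ on $\Theta_i \cong \bbP^1$) one gets $E \hookrightarrow E_{C_1}$, so $E$ is locally free on $\Theta_1 \cong \bbP^1$ and hence $E \cong \bigoplus_j \oO_{\Theta_1}(l_j)$; membership in $\tT_k$ forces every $l_j \geq k+1$, simplicity forces a single summand, and $l \geq k+2$ is excluded since $\oO_{\Theta_1}(l-1) \hookrightarrow \oO_{\Theta_1}(l)$ (both in $\tT_k$) would be a proper subobject. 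Hence $E = \oO_{\Theta_1}(k+1)$, and the list is complete.

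I expect the part requiring the most care to be the triangle bookkeeping in the three ``rotation'' arguments and in the $\fF_k$-filtration step: in each case one must check that all three vertices of the rotated triangle genuinely lie in $\aA_k$ --- this is exactly where the numerical conditions on $l$ are used --- so that the triangle really is a short exact sequence of the tilted heart and the exhibited middle object really does have a proper nonzero subobject. A secondary subtlety is the non-reduced curves (type $(_mI_N)$, and more generally any component $C_1$ of multiplicity $>1$): there ``locally free on $\bbP^1$'' must be replaced by the corresponding description of torsion-free sheaves on $C_1$ belonging to $\tT_k$, but the structure of the argument is unaffected. Finally, since a simple object of a heart is vacuously stable, each of $\oO_x$ ($x \notin C_1$), $\oO_{\Theta_1}(k)[1]$ and $\oO_{\Theta_1}(k+1)$ is in fact stable of phase $1$ once one verifies it is simple in $\aA_k$, so the list is also sharp.
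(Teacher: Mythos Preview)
Your proof is correct. The route differs from the paper's in organization rather than in spirit. The paper first proves that $\oO_{\Theta_1}(k)[1]$ and $\oO_{\Theta_1}(k+1)$ are simple in $\aA_k$ (via an argument tracking $Z$-values along subobjects), and then, for an arbitrary stable $E$ of phase $1$ supported on $C_1$ with $\hH^0(E)\neq 0$, produces a nonzero map $E \to \oO_x$ for some $x\in C_1$, pushes it through the sequence $0 \to \oO_{\Theta_1}(k+1) \to \oO_x \to \oO_{\Theta_1}(k)[1] \to 0$, and invokes Schur's lemma to identify $E$ with one of the two candidates; the case $\hH^0(E)=0$ is then declared ``clear''. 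You instead classify the simple objects of $\aA_k$ of phase $1$ directly, splitting on whether $\hH^{-1}(E)$ vanishes and then on torsion versus torsion-free, and in each non-terminal branch you exhibit an explicit proper subobject in $\aA_k$. What your approach buys is that it makes the paper's ``clear'' step genuinely explicit (the $\fF_k$-filtration together with the rotation excluding $l\le k-1$) and avoids having to prove simplicity of the candidates beforehand; what the paper's approach buys is brevity once those simplicities are in hand, since the $\hH^0(E)\neq 0$ case then collapses to a single Schur-lemma application. Your bookkeeping on the rotated triangles is accurate, and your acknowledgement of the non-reduced case is appropriate: the remark following the torsion-pair lemma (that torsion-free sheaves in $\tT_k$ supported on $C_1$ lie in $\langle \oO_{\Theta_1}(l)\mid l>k\rangle_{ex}$) supplies exactly the replacement you need there.
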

\begin{proof}
First, we show $ \oO_{\Theta_1}(k)[1]$ and $\oO_{\Theta_1}(k+1)$ are $\sigma$-stable. 
Suppose that there exist an exact sequence in $\pP_k(1)$ \[
0 \rightarrow K \rightarrow \oO_{\Theta_1}(k)[1] \rightarrow E \rightarrow 0 ,
\]
and $E$ is a non-zero object.
Taking a long exact sequence in cohomology sheaves, we obtain 
\begin{align}\label{exa}
0 \rightarrow \hH^{-1}(K) \rightarrow \oO_{\Theta_1}(k) \rightarrow \hH^{-1}(E) \rightarrow \hH^0(K)\rightarrow 0.
\end{align}
Since $\hH^-1(E) \not \cong \hH^0(K)$, the map $\oO_{\Theta_1}(k) \rightarrow \hH^{-1}(E)$ is non-zero.
If the map $\hH^{-1}(K) \rightarrow \oO_{\Theta_1}(k)$ is non-zero, since $\hH^{-1}(K)$ is not a torsion sheaf, then the rank of its image is one.
However, $\hH^{-1}(E)$ is also pure. Thus, the cohomology sheaf $\hH^{-1}(K) $ is zero.
By the above sequence, we have
\[
Z(\oO_{\Theta_1}(k))\leq Z(E)<0
\]
However, $Z(E)$ is a linear combination of $Z(\oO_{\Theta_1}(l))$ for some $l\le k$, and we conclude that  $Z(E)$ is equal to $Z(\oO_{\Theta_1}(k))$. 
Therefore, $K$ is zero, and $\oO_{\Theta_1}(k)$ is a simple object.
The proof for $\oO_{\Theta_1}(k+1)[1]$ is similar.

Suppose $E$ is stable of phase $\phi=1$ and $E$ is not a skyscraper sheaf on $C \backslash C_1$. Then, $E$ is supported on $C_1$. Since $E$ lies in $\aA_k$, $\hH^{k}(E) = 0 $ for $i \not \in \{-1,0\}$.
If $\hH^{0}(E) \not = 0$, then for some $x \in C_1$ there is a non-zero morphism $E \rightarrow \oO_x$.
For any $x \in C_1$, there is an exact sequence in $\aA_k$
\[
0 \rightarrow \oO_{\Theta_1}(k+1) \rightarrow \oO_x \rightarrow \oO_{\Theta_1}(k)[1]  \rightarrow 0.
\]
Then, we have $\Hom(E,\oO_{\Theta_1}(k+1)) \not =0$ or  $\Hom(E,\oO_{\Theta_1}(k)[1]) \not =0$.
Since all morphisms between simple objects are isomorphism and $\hH^{0}(E) \not = 0$, $E \cong \oO_{\Theta_1}(k+1)$.
It is clear that $E$ is isomorphic to $\oO_{\Theta_1}(k)[1]$ when $\hH^{0}(E) = 0$.

\end{proof}

We will show the following proposition in Proposition \ref{section62}.
\begin{prop}
$(Z_{r_1},\pP_k)$ satisfies the support property.
\end{prop}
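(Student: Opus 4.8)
The plan is to establish the support property by exhibiting an explicit quadratic form, using the criterion recalled above (Lemma A.4 of \cite{BMS:16}): it suffices to produce a real quadratic form $Q$ on $\bbZ^{n+1}\otimes\bbR$ that is negative definite on $\Ker Z_{r_1}$ and satisfies $Q(v(E))\ge 0$ for every $(Z_{r_1},\pP_k)$-semistable $E$. I would take $Q(v)=\langle v,v\rangle+c\,\lvert Z_{r_1}(v)\rvert^2$, where $\langle-,-\rangle$ is the pairing of the introduction and $c>0$ is a constant to be chosen. On $\bbZ^{n+1}$ that pairing depends only on $(\rk_1,\dots,\rk_n)$ and is the negative of the generalized Cartan form of the affine Dynkin diagram of $C$; by Zariski's lemma it is negative semidefinite with radical exactly $V_0$. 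Hence on $\Ker Z_{r_1}$, where $\lvert Z_{r_1}\rvert^2$ vanishes, $Q$ equals $\langle-,-\rangle$, which is negative definite there provided $\Ker Z_{r_1}\cap V_0=0$, i.e.\ provided $Z_{r_1}\in\pP_0(C)$. This I would check directly: for $\delta$ with $\langle\delta,\delta\rangle=-2$ one has $Z_{r_1}(\delta)\ne0$ (if $\rk_j(\delta)\ne0$ for some $j\ge2$ then $\Imm Z_{r_1}(\delta)\ne0$; otherwise $\delta=\pm[\oO_{\Theta_1}]+m[\oO_x]$ and $Z_{r_1}(\delta)=\pm(r_1-1)-m\ne0$, since $r_1\notin\bbZ$); and $\Ree(Z_{r_1}\circ s)$, $\Imm(Z_{r_1}\circ s)$ are linearly independent on $V_0$ because $Z_{r_1}([\oO_x])=-1$ is real while $Z_{r_1}$ of the imaginary root $\delta_0=\sum_i a_i[\oO_{\Theta_i}]$ has imaginary part $\sum_{j\ge2}a_j\Imm(z_j)>0$.

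It then remains to choose $c$ so that $-\langle v(E),v(E)\rangle\le c\,\lvert Z_{r_1}(E)\rvert^2$ for all semistable $E$. The left-hand side is the value of the affine Cartan form at $(\rk_1(E),\dots,\rk_n(E))$, which I write as $2\rk_1^2-2\rk_1\sum_{j\sim1}\rk_j+q_1(\rk_2,\dots,\rk_n)$ with $q_1\ge0$; and every $E\in\aA_k$ satisfies $\rk_j(E)\ge0$ for $j\ge2$ (because $\hH^{-1}(E)\in\fF_k$ is supported on $C_1$) together with $\sum_{j\ge2}\Imm(z_j)\rk_j(E)=\Imm Z_{r_1}(E)\le\lvert Z_{r_1}(E)\rvert$, so $\rk_2(E),\dots,\rk_n(E)$ are already bounded by $\lvert Z_{r_1}(E)\rvert$. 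Thus the inequality for $c$ reduces to the single estimate $\lvert Z_{r_1}(E)\rvert\ge c'\lvert\rk_1(E)\rvert$ for semistable $E$ with $\lvert\rk_1(E)\rvert$ large relative to $\sum_{j\ge2}\rk_j(E)$. Note that the rationality hypotheses on $r_1$ and $z_2,\dots,z_n$ make the image of $Z_{r_1}$ discrete, so every slice $\pP_k(\phi)$ has finite length and every semistable $E$ is an iterated extension of stable objects of its phase.

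For $E$ of phase $1$ this estimate follows from Proposition \ref{prop:stableofphaseone}: the stable objects are the sheaves $\oO_x$ with $x\notin C_1$, together with $\oO_{\Theta_1}(k)[1]$ and $\oO_{\Theta_1}(k+1)$, each with $\rk_1\in\{-1,0,1\}$ and $Z_{r_1}$ a strictly negative real number; so $\lvert Z_{r_1}(E)\rvert$ is the sum over the Jordan--H\"older factors of the positive numbers $\lvert Z_{r_1}(\cdot)\rvert$, with no cancellation, hence at least (number of factors) times a positive constant depending only on $r_1$ and $k$, while $\lvert\rk_1(E)\rvert$ is at most the number of factors. For $E$ of phase $<1$ I would first extend the argument of Proposition \ref{prop:stableofphaseone} (the vanishing of morphisms to and from the skyscrapers $\oO_x$ and the structure sheaves of divisors supported away from $\Theta^{\mathrm{sing}}$) to show that every $(Z_{r_1},\pP_k)$-stable object of phase $<1$ is a coherent sheaf, necessarily lying in $\tT_k$ since a nonzero object of $\fF_k[1]$ has $\Imm Z_{r_1}=0$ and so phase $1$. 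One then bounds the class of such a stable sheaf $E$ using the sequence $0\to E^{C_1}\to E\to E_{C_1}\to0$: the quotient $E_{C_1}$, being a $\tT_k$-quotient, is a sum of line bundles $\oO_{\Theta_1}(l)$ with $l>k$, so it contributes to $Z_{r_1}(E)$ a term of absolute value at least $c''\rk_1(E)$ with a definite sign, whereas $E^{C_1}$ is supported off $C_1$ and its Harder--Narasimhan factors in $\pP_k$ are stable sheaves of phase $<1$, whose classes are bounded via the support property of $(Z,\Coh(C))$ proved in Proposition \ref{prop:supp porp}; putting these together gives $\lvert Z_{r_1}(E)\rvert\ge c'\lvert\rk_1(E)\rvert$, and the proposition follows.

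I expect the main obstacle to be this last step: controlling the off-$C_1$ part $E^{C_1}$ of a semistable object whose support is not concentrated on $C_1$, and ruling out cancellation on the real axis between its central charge and the large $C_1$-contribution of $E_{C_1}$. This is precisely where the tilt $(\tT_k,\fF_k)$ is used essentially --- it fixes the signs of $Z_{r_1}$ on the relevant line bundles and constrains which stable objects occur --- and where the support property already established on $\Coh(C)$ must be transported. The remaining bookkeeping with the affine Cartan form is routine.
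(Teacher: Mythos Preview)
Your framework is exactly the paper's: the quadratic form $Q(v)=\langle v,v\rangle+c\,|Z_{r_1}(v)|^2$, the verification that $Z_{r_1}\in\pP_0(C)$, and the reduction to Jordan--H\"older factors via finite length of each $\pP_k(\phi)$ (your discreteness argument for this is fine, and in fact slicker than the paper's ad hoc argument in Proposition~\ref{section62}). The gap is in how you handle the stable factors.

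You try to bound $-\langle v(E),v(E)\rangle$ against $|Z_{r_1}(E)|^2$ by direct rank estimates, and you correctly identify the phase $<1$ case as the obstacle: the cancellation you worry about between $Z_{r_1}(E_{C_1})\in\bbR_{<0}$ and $\Ree Z_{r_1}(E^{C_1})$ is not ruled out by your sketch. Stability only gives $\arg Z_{r_1}(E^{C_1})<\arg Z_{r_1}(E)$, which does not bound $\Ree Z_{r_1}(E^{C_1})$ from above; and invoking the support property of $(Z,\Coh(C))$ from Proposition~\ref{prop:supp porp} is not legitimate here, since that concerns a different central charge with $z_1\in\bbH$. So the inequality $|Z_{r_1}(E)|\ge c'|\rk_1(E)|$ for stable $E$ of phase $<1$ is left unproved.

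The paper sidesteps all of this with one observation you are missing: for \emph{any} $\sigma$-stable $E$ one has $\langle E,E\rangle\in\{0,-2\}$. This is the proposition just before Proposition~\ref{prop:cov}; its proof uses only that $\Hom(E,E)\cong\bbC$ (simplicity) together with the triangle $E[1]\to i^*i_*E\to E$ and Serre duality on the surface $S$, so it applies to stable objects in $(Z_{r_1},\aA_k)$ without any further input. Once you know this, take $c=2/M^2$ with $M=\inf\{|Z_{r_1}(\delta)|:\delta\in\Delta(C)\}>0$ (positivity is exactly the argument in Proposition~\ref{prop:cov}, which only uses $Z_{r_1}\in\pP_0(C)$). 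Then for every stable $E$ either $\langle E,E\rangle=0$ and $Q(E)\ge0$ trivially, or $\langle E,E\rangle=-2$, in which case $[E]\in\Delta(C)$ and $|Z_{r_1}(E)|\ge M$, so again $Q(E)\ge0$. Lemma~A.6 of \cite{BMS:16} then extends this to semistables, using the finite length you already have. No rank estimates, no control of $E^{C_1}$, and no cancellation analysis are needed.
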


For all $i = 1,\ldots,n$ and all rational numbers $r$ with $k-1<r<k$, we construct a stability condition $\sigma_{i,r} = (Z_r,\aA_k)$ such that the central charge $Z_r$ is defined by 
\[
Z(E) = -\chi(E) +z_1\rk_1(E)+\cdots+r\rk_i(E)+\cdots+z_n\rk_n(E),
\] and the abelian category is obtained from tilting with respect to the following torsion pair
\begin{align*}
&\fF_k = \langle\oO_{\Theta_i}(l)\mid l\leq k \rangle_{ex}\\
&\tT_k = {}^{\perp}\fF_k.
\end{align*}

\section{The connected component of $\Stab(C)$}
\subsection{The boundary of the open set $U(C)$}

First, we study the boundary of $U(C)$. We will give a description of codimension one submanifolds of $\Stab(C)$ defining $\partial U(C)$.

\begin{defi}
A stability condition $\sigma$ in $\partial U(C)$ is called general if it is contained by only one submanifold.
\end{defi}

\begin{prop}\label{prop:boundary}
Suppose $\sigma \in \partial U(C)$ is a general point. There is an irreducible component $C_i$ and an integer $k$ such that $\oO_x$ is $\sigma$-stable of the same phase for $x \not \in C_i $ and such that the Jordan-H\"{o}lder filtration of $\oO_x$ for $x \in C_i$ is
\[
0 \rightarrow \oO_{\Theta_i}(k+1) \rightarrow \oO_x \rightarrow \oO_{\Theta_i}(k)[1]  \rightarrow 0.
\]

\end{prop}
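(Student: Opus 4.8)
The plan is to show that at a general boundary point $\sigma = (Z,\pP) \in \partial U(C)$, the skyscraper sheaves $\oO_x$ fail to be stable (of a common phase) exactly along one irreducible component, and to read off their Jordan–Hölder factors there. First I would recall that by Proposition \ref{prop:V(C)} the interior $U(C)$ is characterized by all $\oO_x$ being stable of the same phase; hence at a boundary point at least one $\oO_x$ degenerates. Since $\sigma$ is general (lies on a single codimension-one submanifold), I expect only ``one type'' of degeneration to occur, and the natural way to index it is by the irreducible component $C_i$ containing the degenerating points $x$. The first step is therefore to take a sequence $\sigma_m \in U(C)$ with $\sigma_m \to \sigma$, use the deformation/continuity of the slicing (Theorem \ref{thm:deformation}) together with the description $U(C) = V(C)\cdot\widetilde{\GL}^+(2,\bbR)$, and argue that for $x$ outside some $C_i$ the objects $\oO_x$ remain stable of a common phase in the limit — the remaining $\oO_x$, for $x \in C_i$, become strictly semistable.

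Next I would identify the semistable factors of such an $\oO_x$. The key input is the explicit boundary stability conditions built in Section 4: for each component $C_i$ and each integer $k$ the pair $\sigma_{i,r} = (Z_r,\aA_k)$ (obtained by tilting $\Coh(C)$ at the torsion pair $(\tT_k,\fF_k)$ with $k-1 < r < k$) lies in $\partial U(C)$, and Proposition \ref{prop:stableofphaseone} shows that in such a stability condition the only stable objects of phase one on $C_i$ are $\oO_{\Theta_i}(k)[1]$ and $\oO_{\Theta_i}(k+1)$, fitting into the exact sequence
\[
0 \rightarrow \oO_{\Theta_i}(k+1) \rightarrow \oO_x \rightarrow \oO_{\Theta_i}(k)[1] \rightarrow 0
\]
in $\aA_k$. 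So the claim will follow if I can show that a general boundary point $\sigma$ is, up to the action of $\widetilde{\GL}^+(2,\bbR)$ (which does not change Jordan–Hölder factors, only phases and the parametrization), one of these $\sigma_{i,r}$ — or at least has the same heart $\aA_k$ near phase one and the same stable objects of that phase. For this I would argue: at $\sigma$, since $\oO_x$ ($x\in C_i$) is strictly semistable but the generic such $\oO_x$ is still a ``point-like'' object, its JH factors must be a pair of objects $A,B$ with $Z(A),Z(B)$ on a common ray and $[A]+[B] = [\oO_x]$; supported on $C_i$; and such that $\Hom(A,B)\neq 0$ so that a nonsplit extension exists. Using that $\oO_x$ varies in a one-parameter family over $C_i$ while the class $[\oO_x]$ is constant, the factors must be (twists of) $\oO_{\Theta_i}$ placed in appropriate cohomological degrees, and matching Euler characteristics pins down the integer $k$. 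The hypothesis that $\sigma$ is general prevents further splitting (which would put $\sigma$ on more than one wall) and prevents the degeneration from happening on two different components simultaneously.

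The main obstacle I anticipate is the rigidity statement: proving that the \emph{only} way an $\oO_x$ can go strictly semistable at a general boundary point is the one exhibited, i.e. that the heart near phase one is forced to be $\aA_k$ for some component $C_i$ and integer $k$, with no exotic alternatives. Handling this cleanly requires controlling the abelian category $\pP((\phi_0-1,\phi_0])$ of the limiting stability condition, showing its simple objects of phase $\phi_0$ among sheaves supported on $C_i$ are precisely $\oO_{\Theta_i}(k)[1]$ and $\oO_{\Theta_i}(k+1)$, and ruling out that skyscrapers on two components degenerate at once — the latter because that locus has codimension two. I would also need to verify that a genuine point $x \in C_i \setminus \Theta^{\mathrm{sing}}$ (rather than a singular point) has JH filtration of the stated length two; for $x$ a node one would get a longer filtration, but such $x$ form a finite set and can be excluded from the ``general'' stratum. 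Once this structural rigidity is in hand, the displayed short exact sequence and the identification of $k$ follow from Riemann–Roch bookkeeping on $\Theta_i \cong \bbP^1$, exactly as in the proof of Proposition \ref{prop:stableofphaseone}.
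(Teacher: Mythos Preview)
Your plan reaches the right destination but by a harder and partly unjustified route. You propose to first detect which skyscraper sheaves $\oO_x$ degenerate and then argue directly about their Jordan--H\"older factors via support, class arithmetic, and the existence of a nonsplit extension; in particular you write that ``using that $\oO_x$ varies in a one-parameter family over $C_i$ while the class $[\oO_x]$ is constant, the factors must be (twists of) $\oO_{\Theta_i}$.'' That inference is not valid as stated: nothing about the family of points forces the JH factors to be line bundles on $\Theta_i$ rather than, say, more complicated objects with the correct class. This is exactly the ``rigidity obstacle'' you flag, and your sketch does not overcome it.

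The paper bypasses this obstacle by reversing the order of the analysis. After normalizing by $\widetilde{\GL}^+(2,\bbR)$ so that $\sigma \in \partial V(C)$, continuity from nearby $\tau \in V(C)$ gives $\Coh(C) \subset \pP([0,1])$; since $\sigma \notin V(C)$ the inclusion is strict, and it is the line bundles $\oO_{\Theta_i}(j)$ (stable for all nearby $\tau$) whose phase one tracks: some $\oO_{\Theta_i}(j)$ must land in $\pP(0)$. Monotonicity of the phases of $\oO_{\Theta_i}(l)$ in $l$ (a limit of the strict inequalities in $V(C)$) then produces the unique integer $k$ with $\oO_{\Theta_i}(k)\in\pP(0)$ and $\oO_{\Theta_i}(k+1)\in\pP(1)$. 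One then shows $\pP((0,1])$ is a tilt of $\Coh(C)$, and uses the generality hypothesis to move along the single wall so that only the one $z_i$ is real; this forces $\fF = \langle \oO_{\Theta_i}(l)\mid l\le k\rangle_{ex}$ and hence $\pP((0,1]) = \aA_k$. The Jordan--H\"older filtration of $\oO_x$ for $x\in C_i$ is then immediate because $\oO_{\Theta_i}(k)[1]$ and $\oO_{\Theta_i}(k+1)$ are simple in $\aA_k$. In short: track the line bundles $\oO_{\Theta_i}(j)$ rather than the skyscrapers, and deduce the heart first --- the JH filtration then comes for free, and the rigidity problem never arises.
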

\begin{proof}
Each skyscraper sheaf $\oO_x$ is semistable of the same phase for any $\sigma \in \partial U(C)$.
Applying an element of $\widetilde{\GL}^{+}(2,\bbR)$, we can assume the phase of skyscraper sheaves is one. Therefore, we consider only the case when $\sigma \in \partial V(X)$.

Let $\sigma =(Z,\pP)$ be a stability condition in $\partial V(C)$. Since there are stability conditions $\tau \in V(C)$ arbitrarily close to $\sigma$, we have $\Coh(C) \subset \pP([0,1])$. 
If $\Coh(C) \subset \pP((0,1])$, $\sigma$ lies in $V(C)$. 
Therefore, we have $\oO_{\Theta_i}(j) \in \pP(0) $ for some $i = 1,2,\ldots,n$ and $j\in  \bbZ$. 
Replacing $C_i$ by $C_1$, we can assume $\oO_{\Theta_1}(j) \in \pP(0) $ for $j\in  \bbZ$. 
For any $\tau \in V(C)$ and $k\in \bbZ$, we have $\phi_{\tau}(\oO_{\Theta_1}(k)) > \phi_{\tau}(\oO_{\Theta_1}(k-1))$.
Hence, $\phi_{\sigma}(\oO_{\Theta_1}(k)) \geq \phi_{\sigma}(\oO_{\Theta_1}(k-1))$ for any $k \in \bbZ$.
Moreover, if $\oO_{\Theta_1}(j) \in \pP(0)$, we have $\oO_{\Theta_1}(l) \in \pP(0)$ for all integers $l \leq j $. Since $Z(\oO_{\Theta_1}(l)) <0$ for enough large $l$, there is an integer $k$ such that
\[
\oO_{\Theta_1}(k) \in \pP(0), \  \oO_{\Theta_1}(k+1) \in \pP(1). 
\]

We claim that $\pP((0,1])$ can be obtained from tilting with respect to a torsion pair in $\Coh(C)$.
Indeed, if $E$ is a $\sigma$-stable object of phase $\phi$ for some $0<\phi<1$, then $E$ is also $\tau$-stable of phase $\psi$ for $0<\psi<1$ and $\tau \in V(C)$ arbitrarily close to $\sigma$, so $E$ is a coherent sheaf. 
Suppose $E$ is a $\sigma$-stable object of phase one. 
Since $\Coh(C) \subset \pP([0,1])$, we have the cohomology sheaf $\hH^i(E)$ vanishes for $i \not = -1,0$.
We conclude that the pair 
\[
\tT = \Coh(C) \cap \pP((0,1])  \text{ and }  \fF = \Coh(C) \cap \pP((-1,0])
\]
defines a torsion pair in $\Coh(C)$ and the category $\pP((0,1])$ is the corresponding tilt.

Since $\sigma$ is general, the boundary at $\sigma $ is submanifold of $\Stab^{\dagger}(C)$ defined by $Z(\oO_{\Theta_1}(k+1))/Z(\oO_x) \in \bbR_{>0}$. 
Thus, we can assume $Z(F) \not \in \bbR$ for all objects $F$ with $\rk_i(F) \not = 0$ for some $i\in \{2,3,...,n\}$ by moving a little bit along the boundary of $U(C)$. Therefore, we have
\[
\fF = \langle\oO_{\Theta_1}(l)\mid l\leq k \rangle_{ex}.
\]
Moreover, $\pP((0,1])$ is equal to the heart $\aA_k$ of a stability condition constructed in the previous section. By construction, $\oO_{\Theta_1}(k)[1]$ and $\oO_{\Theta_1}(k+1)$ are simple objects in $\aA_k$. 
It follows that there is a Jordan-H\"{o}lder filtration
\[
0 \rightarrow \oO_{\Theta_i}(k+1) \rightarrow \oO_x \rightarrow \oO_{\Theta_i}(k)[1]  \rightarrow 0.
\]
\end{proof}

\begin{defi}
 We call $\sigma$ satisfying assumptions in the above proposition the $(C_{i,k})$-type stability condition for given integers $i,k$.
 We also say that a submanifold $N$ of $\partial U(C)$ corresponds to $(C_{i,k})$ if a general point $\sigma$ in $N$ is $(C_{i,k})$-type.
\end{defi}
We note that the $(C_{i,k})$-type stability condition exists for all $i,k$ by constructing the stability condition $\sigma_{i,r}$ in Section 4.
We consider an autoequivalence $T_{\oO_{\Theta_i}(k)}$ which is obtained by the restriction of the spherical twist along $\oO_{\Theta_i}(k)$ defined in Theorem \ref{thm:twis}.
Since we can compute 
\[
T_{\oO_{\Theta_i}(k)}(\oO_{\Theta_i}(k)) = \oO_{\Theta_i}(k)[-1],\ T_{\oO_{\Theta_i}(k)}(\oO_{\Theta_i}(k+1)) = \oO_{\Theta_i}(k-1), 
\] for all $i = 1,2,\ldots,n$ and all $k$,
we have that a  stability condition $\sigma$ is the $(C_{i,k})$-type if and only if $T_{\oO_{\Theta_i}(k)}(\sigma)$ is the $(C_{i,k-1})$-type.

Recall that for each autoequivalence $\Phi$ and each stability condition $\sigma =(Z,\pP)$, $\Phi(\sigma)$ is defined as follows:
\[
\Phi(\sigma) = (Z\circ \Phi^{-1},\{\Phi(\pP(\phi)) \}).
\]
It defines a left action on $\Stab(C)$ by $\mathrm{Auteq}(C)$.
\begin{prop}
If $\sigma =(Z,\aA) \in \Stab^{\dagger}(C)$ is a $(C_{i,k})$-type stability condition, then $\sigma$ lies in the boundary $\partial U(C)$. Moreover, the central charge $Z$ is defined
\[
Z(E) = -\chi(E) +z_1\rk_1(E)+\cdots+r\rk_i(E)+\cdots+z_n\rk_n(E),
\] for some real number $r$ with $k-1<r<k$ and $z_i \in \bbH$,
and the abelian category is obtained from tilting with respect to the following torsion pair in $\Coh(C)$
\begin{align*}
&\fF_k = \langle\oO_{\Theta_i}(l)\mid l\leq k \rangle_{ex}\\
&\tT_k = {}^{\perp}\fF_k.
\end{align*}

\end{prop}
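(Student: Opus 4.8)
The plan is to show that a $(C_{i,k})$-type $\sigma$ lying in $\Stab^{\dagger}(C)$ is necessarily one of the stability conditions $\sigma_{i,r}$ constructed in Section 4. Since $U(C)=V(C)\cdot\widetilde{\GL}^{+}(2,\bbR)$ is invariant under the $\widetilde{\GL}^{+}(2,\bbR)$-action and the assertion is equivariant, after acting I may assume $\oO_x$ is $\sigma$-stable of phase one for every $x\notin C_i$; as $[\oO_x]$ is independent of $x$, this gives $Z(\oO_x)=-1$ for all $x$. The Jordan--H\"{o}lder filtration $0\to\oO_{\Theta_i}(k+1)\to\oO_x\to\oO_{\Theta_i}(k)[1]\to 0$ for $x\in C_i$ forces $\oO_{\Theta_i}(k+1)$ and $\oO_{\Theta_i}(k)[1]$ to be $\sigma$-stable of phase one, so $\oO_{\Theta_i}(k+1)\in\pP(1)$ and $\oO_{\Theta_i}(k)\in\pP(0)$, whence $Z(\oO_{\Theta_i}(k))\in\bbR_{>0}$ and $Z(\oO_{\Theta_i}(k+1))=Z(\oO_{\Theta_i}(k))-1\in\bbR_{<0}$; this already pins the $i$-th coefficient of $Z$ to a real number $r$ with $k-1<r<k$.

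To prove $\sigma\in\partial U(C)$ it suffices to prove $\sigma\in\overline{U(C)}$, since the strictly semistable skyscrapers $\oO_x$ with $x\in C_i$ keep $\sigma$ out of $U(C)$. Here I would invoke Theorem \ref{thm:deformation}: as $\sigma\in\Stab^{\dagger}(C)$, $\pi$ is a covering over a neighbourhood of $Z$ in the relevant open set of central charges, so a small perturbation $Z'$ of $Z$ that keeps the $-\chi$ part — hence $Z'(\oO_x)=-1$ — but gives the $i$-th coefficient a small positive imaginary part is realised by a stability condition $\tau$ near $\sigma$. I would then check $\tau\in V(C)$ using Proposition \ref{prop:V(C)}: the skyscrapers $\oO_x$ with $x\notin C_i$ stay stable of phase one by openness of stability, while for $x\in C_i$ the two $\sigma$-Jordan--H\"{o}lder factors of $\oO_x$ acquire distinct $\tau$-phases, so $\oO_x$ — having no proper nonzero subsheaf — becomes $\tau$-stable of phase one. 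Hence $\tau\in V(C)\subset U(C)$ and $\tau\to\sigma$, so $\sigma\in\overline{V(C)}\subset\overline{U(C)}$.

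Having realised $\sigma$ as a limit of points of $V(C)$, continuity of $\phi^{\pm}_{\sigma}$ gives $\Coh(C)\subset\pP([0,1])$; then, exactly as in the proof of Proposition \ref{prop:boundary}, the heart $\pP((0,1])$ is the tilt of $\Coh(C)$ along the torsion pair $\tT=\Coh(C)\cap\pP((0,1])$, $\fF=\Coh(C)\cap\pP(0)$, with $\oO_{\Theta_i}(l)\in\fF$ for $l\le k$ and $\oO_{\Theta_i}(l)\in\pP(1)$ for $l>k$. It remains to identify $Z$ and $\fF$. For $j\ne i$ the coefficient $z_j$ of $\rk_j$ cannot be real: otherwise the argument of Proposition \ref{prop:boundary} applied to $C_j$ would put some $\oO_{\Theta_j}(l)[1]$ in $\pP(1)$ and produce a nontrivial Jordan--H\"{o}lder filtration of $\oO_x$ for $x\in C_j\subset C\setminus C_i$, contradicting $\sigma$-stability of that skyscraper; and since $Z(\oO_{\Theta_j}(-1))=z_j$ and $\oO_{\Theta_j}(-1)\in\Coh(C)\subset\pP([0,1])$, we get $\Imm z_j\ge 0$, hence $z_j\in\bbH$. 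Therefore $Z(F)\notin\bbR$ whenever $\rk_j(F)\ne 0$ for some $j\ne i$, so any torsion-free sheaf in $\fF$ is supported on $C_i$, and its part in $\langle\oO_{\Theta_i}(l):l>k\rangle_{ex}$ — a subsheaf lying in $\pP(1)$ — must vanish by the slicing axiom $\Hom(\pP(1),\pP(0))=0$. Thus $\fF=\langle\oO_{\Theta_i}(l):l\le k\rangle_{ex}$ and $\tT={}^{\perp}\fF$, so $\pP((0,1])=\aA_k$ and $Z$ has the displayed form; in other words $\sigma=\sigma_{i,r}$.

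I expect the main obstacle to be the step in the second paragraph: one must control how the strictly semistable skyscrapers $\oO_x$ ($x\in C_i$) behave under the perturbation of the central charge and verify that they become genuinely stable — equivalently, that the heart untilts back to $\Coh(C)$ — so that the nearby stability condition lands in $V(C)$ rather than merely in its closure. Once this is in place, the identification of the heart with $\aA_k$ and of the central charge with $Z_r$ is essentially the torsion-pair computation already performed for Proposition \ref{prop:boundary} and in Section 4.
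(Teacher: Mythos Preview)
Your route is genuinely different from the paper's. The paper never deforms $\sigma$ into $V(C)$; it works directly with $\sigma$ and reruns the cohomological argument of Proposition \ref{prop:V(C)}. Since every skyscraper $\oO_x$ is $\sigma$-\emph{semistable} of phase one (stable for $x\notin C_i$, strictly semistable for $x\in C_i$), the vanishing $\Hom(\oO_D,E[j])=0$ and the Serre-duality step at smooth points go through verbatim for any $\sigma$-stable $E$ with $\phi(E)\in(0,1)$; this already forces $\pP((0,1))\subset\Coh(C)$, and for stable $E$ of phase one the same argument shows $\hH^i(E)=0$ for $i\notin\{-1,0\}$ with $\hH^{-1}(E)$ supported on $C_i$. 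From this the tilt description and the form of $Z$ fall out immediately. This is shorter and avoids any wall-crossing.

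Your deformation approach can be made to work, but the step you flag is not only delicate, it is misargued as written. Saying that $\oO_x$ ``has no proper nonzero subsheaf'' is a statement in $\Coh(C)$, whereas $\tau$-stability is about subobjects in the heart $\pP_\tau((0,1])$, which you have not yet identified with $\Coh(C)$; so the reasoning is circular. The correct justification is a local wall-crossing argument: for $\tau$ sufficiently close to $\sigma$, any $\tau$-HN or $\tau$-JH factor of $\oO_x$ is built from the two $\sigma$-JH factors $\oO_{\Theta_i}(k+1)$ and $\oO_{\Theta_i}(k)[1]$; with your choice of sign for the perturbation one has $\phi_\tau(\oO_{\Theta_i}(k+1))<1<\phi_\tau(\oO_{\Theta_i}(k)[1])$, so the only candidate destabilising subobject of $\oO_x$ has $\oO_{\Theta_i}(k)[1]$ as a factor, but $\Hom(\oO_{\Theta_i}(k)[1],\oO_x)=\Ext^{-1}_C(\oO_{\Theta_i}(k),\oO_x)=0$. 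Hence $\oO_x$ is $\tau$-stable. Once you have $\tau\in V(C)$ via Proposition \ref{prop:V(C)}, the inclusions $z_j\in\bbH$ for $j\neq i$ come for free from the form of central charges in $V(C)$, so your later paragraph deriving them separately is redundant. (A minor slip: $C_j\not\subset C\setminus C_i$ since components meet; you mean $x\in C_j\setminus C_i$.)

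In summary, your outline is salvageable but the paper's direct method, reusing Proposition \ref{prop:V(C)} with ``stable'' weakened to ``semistable'' for the skyscrapers, reaches the goal without ever invoking the deformation property or analysing a wall-crossing.
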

\begin{proof}
Applying an element of $\widetilde{\GL}^{+}(2,\bbR)$, we can assume each skyscraper sheaf $\oO_x$ is of phase one and $Z(\oO_x) = -1$. 
Thus, by applying the same argument in Proposition \ref{prop:V(C)}, the cohomology sheaves $\hH^i(E)$ vanish unless $i=0$ for all objects $E \in \pP((0,1))$.
Suppose $E$ is stable of phase one.
By a similar argument as above, we conclude that the cohomology sheaves $\hH^i(E)$ vanish unless $i\in\{-1,0\}$. 
Moreover, since skyscraper sheaves on $C\backslash C_i$ are stable of phase one, the $-1$-cohomology sheaf $\hH^{-1}(E)$ is supported on $C_i$.

Since the heart $\pP((0,1])$ is contained in $\langle \Coh(C)[1],\Coh(C)\rangle_{ex}$, the heart $\pP((0,1])$ is obtained by a tilting with respect to a torsion pair $(\tT,\fF)$ on $\Coh(C)$. 
All sheaves in $\tT$ are supported on $C_i$. Thus, the central charge is given by
\[
Z(E) = -\chi(E) +z_1\rk_1(E)+\cdots+r\rk_i(E)+\cdots+z_n\rk_n(E),
\] for real number $r$ with $k-1<r<k$ and $z_i \in \bbH$. 
Since each line bundle $\oO_{\Theta_i}(l)$ for $l\leq k$ lies in $\fF$, we conclude the torsion pair $(\tT,\fF)$ is the same as the torsion pair in the statement.

\end{proof}
We have the following proposition.
\begin{prop}\label{prop:spanedspherical}
Let $\bB$ be a subset of $\mathrm{Auteq}(C)$ generated by the restricted spherical twists of the form $T_{\oO_{\Theta_i}(k)}$ for all $i,k$. 
Then, we have
\[
\Stab^{\dagger}(C) = \bigcup_{\Phi \in \bB}\Phi(\overline{U(C)}).
\]
\end{prop}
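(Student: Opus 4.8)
The plan is to prove that $\Stab^{\dagger}(C)$ is covered by the translates $\Phi(\overline{U(C)})$ for $\Phi \in \bB$. Since $\Stab^{\dagger}(C)$ is connected, it suffices to show that the set $W := \bigcup_{\Phi \in \bB}\Phi(\overline{U(C)})$ is both open and closed in $\Stab^{\dagger}(C)$; closedness is automatic once we know $W$ is a union of the closures $\Phi(\overline{U(C)})$, because each such closure is closed and, as we will see, the family is locally finite near any point. The real content is openness, together with the claim that $W$ is closed, and for that the key is to understand what happens at a boundary stability condition.

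First I would show that any $\sigma \in \partial U(C)$ which is a \emph{general} point lies in the closure of a neighboring chamber of the form $\Phi(U(C))$ for a single restricted spherical twist $\Phi = T_{\oO_{\Theta_i}(k)}$ or $T_{\oO_{\Theta_i}(k)}^{-1}$. By Proposition \ref{prop:boundary} such a $\sigma$ is of $(C_{i,k})$-type, with heart $\aA_k$ obtained by tilting $\Coh(C)$ at the torsion pair $(\tT_k,\fF_k)$, and by the preceding proposition the central charge has the explicit form with $r_1 \in (k-1,k)$ real. Using the deformation theorem (Theorem \ref{thm:deformation}) I would deform $\sigma$ slightly: pushing the real parameter $r_1$ up past $k$ recovers a stability condition in $U(C)$ (the skyscrapers $\oO_x$ become stable of a common phase again), while pushing it below $k-1$ — equivalently, relabeling via the identity $T_{\oO_{\Theta_i}(k)}(\sigma)$ being of $(C_{i,k-1})$-type as noted in the excerpt — places $\sigma$ on the common boundary of $U(C)$ and $T_{\oO_{\Theta_i}(k)}(U(C))$. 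Thus a neighborhood of a general boundary point is contained in $W$. To handle non-general points of $\partial U(C)$ (those lying on more than one wall), I would argue that the walls are locally finite, that the general points are dense in $\partial U(C)$, and that a stability condition on the intersection of several walls is a common limit of the adjacent chambers; the support property (Propositions \ref{prop:supp porp} and the ones deferred to Section 6) guarantees, via the deformation theorem, that small neighborhoods behave well.

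Next I would propagate this: if $\overline{U(C)}$ has a boundary wall shared with $\Phi(U(C))$, then applying any $\Psi \in \bB$ shows $\Psi(\overline{U(C)})$ shares a wall with $\Psi\Phi(U(C))$, so the union $W$ is \emph{locally} the same on both sides of every wall it contains. More precisely, I would check that $W$ is open: given $\sigma \in W$, either $\sigma$ lies in some open chamber $\Phi(U(C))$, which is open, or $\sigma$ lies on a wall of some $\Phi(\overline{U(C)})$, and the previous paragraph (applied after translating by $\Phi^{-1}$) shows a whole neighborhood of $\sigma$ is covered by $\Phi(\overline{U(C)}) \cup \Phi'(\overline{U(C)})$ for an adjacent chamber $\Phi' = \Phi T_{\oO_{\Theta_i}(k)}^{\pm 1}$. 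Hence $W$ is open. For closedness, I would show that the closure of $W$ adds no new points: a limit point $\sigma$ of $W$ must have all skyscrapers $\oO_x$ semistable of a common phase — this is a closed condition stable under the limit, using that semistability and phase are controlled by the metric topology — so by Proposition \ref{prop:V(C)} (and its $(C_{i,k})$-type analogue) either $\sigma \in \overline{U(C)}$ or $\sigma$ is obtained from such by a finite composition of the twists $T_{\oO_{\Theta_i}(k)}$, i.e. $\sigma \in W$. Since $\Stab^{\dagger}(C)$ is connected and $W$ is a nonempty clopen subset, $W = \Stab^{\dagger}(C)$.

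The main obstacle I anticipate is the closedness argument: a priori a sequence $\sigma_m \in \Phi_m(\overline{U(C)})$ could involve longer and longer words $\Phi_m \in \bB$, and one must rule out an "infinite escape" to a limit not of the desired form. The way I would control this is to bound, along a convergent sequence, the mass and phases of the objects $\oO_{\Theta_i}(l)$: since these classes span $G(C)$ and the central charges converge in $G(C)^{\vee}\otimes\bbC$, only finitely many twists can occur before the configuration of phases $\phi(\oO_{\Theta_i}(l))$ stabilizes, because each twist $T_{\oO_{\Theta_i}(k)}$ permutes these line bundles in a way that strictly changes the ordering of their phases near the relevant wall. This finiteness, combined with the local description of walls via the deformation theorem, should close the gap. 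A secondary technical point is verifying that the walls form a \emph{locally finite} arrangement — this again follows from the support property, since the quadratic form $Q$ bounds how many classes $\delta$ with $\langle\delta,\delta\rangle = -2$ can have $Z(\delta)$ near a given value, hence bounds the number of walls meeting a compact set.
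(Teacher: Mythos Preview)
Your overall strategy (show $W$ is clopen in the connected space $\Stab^{\dagger}(C)$) is different from the paper's. The paper instead takes an arbitrary $\sigma\in\Stab^{\dagger}(C)$, chooses a path $\gamma\colon[0,1]\to\Stab^{\dagger}(C)$ from a point of $U(C)$ to $\sigma$, and invokes the wall-and-chamber structure for the fixed class $[\oO_x]$ (in the sense of \cite[Section 9]{Bri:08}): on the compact set $\gamma([0,1])$ there are only finitely many walls, and at each wall crossing (after a small perturbation to a general point, exactly as you do) one applies a single restricted twist $T_{\oO_{\Theta_i}(k)}$ to fold the next segment of $\gamma$ back into $U(C)$. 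After finitely many such reflections one obtains $\Phi\in\bB$ with $\Phi(\sigma)\in\overline{U(C)}$. The local input --- that crossing a general wall of $\partial U(C)$ lands in a translate by a single $T_{\oO_{\Theta_i}(k)}$ --- is the same as yours; the difference is that compactness of the path replaces your separate closedness argument.

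That closedness argument is where your proposal has a genuine gap. The claim ``a limit point $\sigma$ of $W$ must have all skyscraper sheaves $\oO_x$ semistable of a common phase'' is not correct: that condition characterizes $\overline{U(C)}$, but in $\Phi(\overline{U(C)})$ for $\Phi\neq\mathrm{id}$ it is the objects $\Phi(\oO_x)$, not the $\oO_x$, that are semistable of a common phase, and these may be quite different complexes. So a limit of points in varying $\Phi_m(\overline{U(C)})$ need not satisfy your condition. Your attempted repair --- bounding the word length of $\Phi_m$ via the phases of the infinitely many objects $\oO_{\Theta_i}(l)$ --- is not justified as written, since convergence of central charges does not by itself control phases of unboundedly many objects. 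The clean fix is exactly the ingredient the paper uses: local finiteness of the wall-and-chamber decomposition for the class $[\oO_x]$ on compact sets. This makes the family $\{\Phi(\overline{U(C)})\}_{\Phi\in\bB}$ locally finite (distinct translates are separated by such walls), and a locally finite union of closed sets is closed; alternatively, one simply runs the paper's path argument, which extracts the required finite word in $\bB$ directly from this local finiteness on $\gamma([0,1])$.
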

\begin{proof}
Let $\sigma$ be a stability condition in $\Stab^{\dagger}(C)$. There is a continuous path $\gamma$ 
\[
\gamma: [0,1] \rightarrow \Stab^{\dagger}(C)
\]
such that $\gamma(0) \in U(C)$ and $\gamma(1) = \sigma$.
Take $t \in [0,1]$ such that $\gamma(t)$ lies in the boundary $\partial U(C)$ and $\gamma((0,t))$ is contained in $U(C)$. Deforming $\gamma$ a little, we can assume that $\gamma(t)$ is a general point in
 the submanifold corresponding to $(C_{i,k})$ for some $i,k$. 
 Then, the boundary of $U(C)$ is defined locally by $Z(\oO_{\Theta_1}(k+1))/Z(\oO_x) \in \bbR_{>0}$ and $U(C)$ is the side where $\Imm Z(\oO_{\Theta_1}(k+1))/Z(\oO_x) < 0$. 
 Applying the restricted spherical twist $\Phi = T_{\oO_{\Theta_i}(k)}$, $\Phi(\gamma(t))$ is in the submanifold corresponding to $(C_{i,k-1})$ and $\Phi(\gamma(t,t+\varepsilon))$ is contained in $U(C)$ for some positive number $\varepsilon$. 
 Indeed, if $\Imm Z(\oO_{\Theta_1}(k+1))/Z(\oO_x) < 0$, then we have $\Imm Z(\Phi^{-1}\oO_{\Theta_1}(k+1))/Z(\Phi^{-1}\oO_x) > 0$. Thus, $\Phi$ acts on $G(C)$ via a reflection. 
 
 Consider the wall and chamber structure on $\Stab^{\dagger}(C)$ with respect to $[\oO_x]$ defined in \cite[Section 9]{Bri:08}. 
 For any compact subset $\mM$ there is a finite collection $\{\wW_{i}\mid i \in I\}$ of real codimension one submanifolds of $\Stab^{\dagger}(C)$ such that any connected components $\cC \in \mM\backslash \cup \wW_{i}$ satisfies the following property:
 If a object $[E]$ with a class $[E] = [\oO_x]$ is semistable (resp. stable) in $\sigma$ for some $\sigma\in \cC$, then $E$ is semistable (resp. stable) in $\sigma$ for all $\sigma \in \cC$.
 
 Since $\gamma([0,1])$ is compact, then there are finite walls intersecting the path  $\gamma([0,1])$. Repeating the above argument, we conclude that there is a autoequivalence $\Phi \in \bB$ such that $\sigma \in \Phi(\overline{U(C)})$.

\end{proof}

\subsection{The covering property of $\Stab^{\dagger}(C)$}
Next, we study the forgetful map $\pi: \Stab^{\dagger}(C) \rightarrow G(C)^{\vee}\otimes \bbC$. 
We show $\pi$ is a covering map on an open set $\pP^{+}_0(C) \subset G(C)^{\vee}\otimes \bbC$.
First, we consider a pairing on the Grothendieck group $G(C)$. 
Let $S \rightarrow B$ be a relatively minimal elliptic surface that has a fiber $S_0$ isomorphic to the reducible Kodaira curve $C$. We denote by $i$ a closed immersion $C \hookrightarrow S$.

\begin{lem-defi}\label{lem:squre}
Let $\langle -,- \rangle$ be a pairing on $G(C)$ defined by
\[
\langle [E], [F] \rangle = -\chi(i_*E,i_*F) = \sum_{j} (-1)^{j+1}\dim \Hom_S(i_*E,i_*F[j]).
\]
Then, the pairing $\langle -,- \rangle$ is a negative semi-definite symmetric form.
\end{lem-defi}
\begin{proof}
Since the canonical bundle $\omega_S$ on $S$ is trivial on each fiber, it follows from Serre duality that the pairing is symmetric.

By the definition of the pairing, we compute that $\langle [\oO_{\Theta_i}(-1)],[\oO_{\Theta_i}(-1)]\rangle = -2$ for any $i$, and $\langle [\oO_{\Theta_i}(-1)],[\oO_{\Theta_j}(-1)]\rangle$ is equal to the total number of the intersection points with multiplicities for any $i$ and $j$ with $i \not=j$.
The intersection matrix $A = (\langle [\oO_{\Theta_i}(-1)],[\oO_{\Theta_j}(-1)]\rangle)_{i,j}$ is equal to the Cartan matrix of the affine Dynkin diagram corresponding to $C$.
Since this Cartan matrix is negative semi-definite, so is the form $\langle -,- \rangle$.

\end{proof}

It also shows that the classes $[\oO_x] $ and $a_1[\oO_{\Theta_1}(-1)]+\cdots +a_n[\oO_{\Theta_n}(-1)]$ are numerical trivial with respect to the pairing $\langle -,- \rangle$ where $a_i$ is a multiplicity of the irreducible component $C_i$.
There is vector subspace $V_0\subset G(C)\otimes \bbR$ such that the pairing is zero on $V_0$.

\begin{prop}
Let $\sigma$ be a stability condition in $\Stab^{\dagger}(C)$ and $E$ be a $\sigma$-stable object.
Then, 
\[
\langle E,E \rangle \in \{-2,0\}.
\]
\end{prop}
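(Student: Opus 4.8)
The plan is to show $-2\le\langle E,E\rangle\le 0$ together with $\langle E,E\rangle\in 2\bbZ$. The upper bound and the parity are cheap: by Lemma-Definition \ref{lem:squre} the form is negative semi-definite, and in the $\bbZ$-basis $[\oO_{\Theta_1}(-1)],\dots,[\oO_{\Theta_n}(-1)],[\oO_x]$ of $G(C)$ its Gram matrix is block diagonal, with the affine Cartan matrix computed there as one block and the $1\times1$ zero matrix as the other (recall $[\oO_x]$ is numerically trivial); both blocks have even diagonal, so $\langle v,v\rangle\in2\bbZ$ for every $v$. Thus everything reduces to the single inequality $\langle E,E\rangle\ge -2$, i.e. $\chi(i_*E,i_*E)\le2$.

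Next I would reduce to the case that $E$ is an honest coherent sheaf. By Proposition \ref{prop:spanedspherical} there is a composite $\Phi$ of restricted spherical twists $T_{\oO_{\Theta_j}(l)}$ with $\sigma\in\Phi(\overline{U(C)})$. By Theorem \ref{thm:twis} each of these twists is the restriction to $\dD^b(S_0)$ of an autoequivalence of $\dD^b(S)$, which preserves the Euler pairing $\chi(-,-)$ on $S$; hence $\langle\Phi^{-1}E,\Phi^{-1}E\rangle=\langle E,E\rangle$, and $\Phi^{-1}E$ is stable for $\Phi^{-1}\sigma$. Replacing $(\sigma,E)$ by $(\Phi^{-1}\sigma,\Phi^{-1}E)$ I may assume $\sigma\in\overline{U(C)}$, and after acting by a suitable element of $\widetilde{\GL}^{+}(2,\bbR)$ (which only rescales phases, hence replaces $E$ by a shift and leaves $\langle E,E\rangle$ unchanged) I may assume the skyscraper sheaves have phase one, i.e. $\sigma\in\overline{V(C)}$. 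If $\sigma\in V(C)$ then $\pP_\sigma((0,1])=\Coh(C)$ by Proposition \ref{prop:V(C)}, so every stable object is a shift of a slope-stable sheaf. If $\sigma$ is a $(C_{i,k})$-type stability condition, then by Proposition \ref{prop:boundary}, Proposition \ref{prop:stableofphaseone} and the identification of the heart with $\aA_k$, every stable object of phase in $(0,1)$ is a sheaf while every stable object of phase one is one of $\oO_x$ $(x\notin C_i)$, $\oO_{\Theta_i}(k)[1]$, $\oO_{\Theta_i}(k+1)$. So in all cases $E$ is, up to a shift, a $\sigma$-stable coherent sheaf, and since $\langle F[m],F[m]\rangle=\langle F,F\rangle$ I may simply assume $E\in\Coh(C)$.

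Now I would compute on $S$. Since $i_*E$ is a coherent sheaf on the smooth surface $S$ with proper support, $\Ext^j_S(i_*E,i_*E)=0$ for $j\notin\{0,1,2\}$ and $\chi(i_*E,i_*E)=\dim\Hom_S(i_*E,i_*E)-\dim\Ext^1_S(i_*E,i_*E)+\dim\Ext^2_S(i_*E,i_*E)$. As $i_*$ is fully faithful on coherent sheaves and $E$ is stable, hence simple, $\Hom_S(i_*E,i_*E)=\Hom_C(E,E)=\bbC$. Set $\tau=i^*\omega_S\in\mathrm{Pic}(C)$; adjunction on $S$ together with $\Theta_j^2=-2$ gives $\omega_S|_{\Theta_j}\cong\oO_{\Theta_j}$ for every $j$, so $\tau\in\mathrm{Pic}^0_{\mathrm{tri}}(C)$. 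By the remark preceding Theorem \ref{thm:main} the autoequivalence $-\otimes\tau$ acts trivially on $\Stab^{\dagger}(C)$, so it fixes $\sigma$ and therefore $E\otimes\tau$ is again $\sigma$-stable of the same phase as $E$; hence $\Hom_C(E,E\otimes\tau)$ is $\bbC$ if $E\cong E\otimes\tau$ and $0$ otherwise. Serre duality on $S$ and the projection formula $i_*E\otimes\omega_S\cong i_*(E\otimes\tau)$ give $\Ext^2_S(i_*E,i_*E)\cong\Hom_C(E,E\otimes\tau)^{\vee}$, so $\dim\Ext^2_S(i_*E,i_*E)\le1$. Therefore $\chi(i_*E,i_*E)\le1-0+1=2$, i.e. $\langle E,E\rangle\ge-2$, and with the first paragraph we conclude $\langle E,E\rangle\in\{-2,0\}$.

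The step I expect to cause the most trouble is the reduction to sheaves in the second paragraph, since it uses in an essential way the description of which objects are stable at the boundary of $U(C)$ obtained in the previous section. The appeal to $\mathrm{Pic}^0_{\mathrm{tri}}(C)$ in the third paragraph is what makes the bound on $\Ext^2_S$ go through uniformly, in particular for the non-reduced curves $(_mI_N)$ where $i^*\omega_S$ may be a nontrivial torsion line bundle.
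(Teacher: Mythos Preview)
Your reduction to sheaves in the second paragraph has a gap: after landing in $\overline{V(C)}$ you only treat $\sigma\in V(C)$ and $\sigma$ of $(C_{i,k})$-type, but the latter are by definition only the \emph{general} points of $\partial V(C)$. A stability condition lying on the intersection of several boundary walls is in neither case, and Propositions~\ref{prop:boundary} and~\ref{prop:stableofphaseone} say nothing about its stable objects. This is not hard to repair---stability of $E$ is an open condition, so one may perturb $\sigma$ so that $E$ remains stable while $\Phi^{-1}\sigma$ moves from $\overline{U(C)}$ into $U(C)$---but as written the case analysis is incomplete.

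More to the point, the reduction is unnecessary, and the paper avoids it. The paper's argument works for any $\sigma$-stable $E$: the only facts used are $\Hom_C(E,E)\cong\bbC$ and $\Hom_C(E,E[j])=0$ for $j<0$, both immediate since a stable object is simple and lies in a heart. The tool you are missing is the distinguished triangle $E[1]\to i^*i_*E\to E$ (valid because $C\hookrightarrow S$ is a Cartier divisor with trivial normal bundle); applying $\Hom_C(-,E)$ and adjunction gives $\Hom_S(i_*E,i_*E)\cong\bbC$ and $\Hom_S(i_*E,i_*E[j])=0$ for $j<0$ without assuming $E$ is a sheaf, and Serre duality then handles $j\geq 2$. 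Your third-paragraph observation that $i^*\omega_S\in\mathrm{Pic}^0_{\mathrm{tri}}(C)$, so that $E\otimes i^*\omega_S$ is again stable of the same phase, is however a genuine refinement: it is precisely what is needed to bound $\Ext^2_S$ in the multiple-fibre types $(_mI_N)$, where $i^*\omega_S$ is a nontrivial torsion bundle and the paper's tacit assumption that $\omega_S$ restricts trivially to the fibre fails. If you keep this step, note that the lemma asserting $\mathrm{Pic}^0_{\mathrm{tri}}(C)$ acts trivially on $\Stab^\dagger(C)$ is proved only later in the paper; you should check (as is indeed the case) that its proof does not rely on the present proposition.
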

\begin{proof}
Since the pairing is negative semi-definite, and a lattice $(G(C),\langle-,-\rangle)$ is even, it is enough to show that $\langle E,E \rangle\geq-2$ for any $\sigma$-stable object $E$. Consider the closed immersion $i:C \rightarrow S$, we have a distinguished triangle
\[
E[1] \rightarrow i^*i_*E  \rightarrow E.
\]
Applying $\Hom(-,E)$ yields a long exact sequence
\[
\Hom(E[2],E) \rightarrow \Hom(E,E) \rightarrow \Hom(i^*i_*E,E) \rightarrow \Hom(E[1],E).
\]
Since $E$ lies in the heart of a t-structure, we have $\Hom(E[1],E)=0$ and
\[
 \Hom(i_*E,i_*E)\cong\Hom(E,E) \cong \bbC.
\]

Therefore, 
\begin{align*}
\chi(i_*E,i_*E) \geq -\Hom(i_*E,i_*E)-\Hom(i_*E,i_*E[2]) \geq -2.
\end{align*}

\end{proof} 
Let $\Delta(C)$ be the subset in $G(C)$ defined by
\[
\Delta(C) = \{ \delta \in G(C)\mid \langle \delta, \delta \rangle =-2 \}.
\]
We consider an open set $\pP_0(C)$ in $G(C)^{\vee}\otimes \bbC$ defined by
\[
\pP_0(C) =\left\{ Z\in G(C)^{\vee}\otimes \bbC \;\middle|\;
\begin{array}{l}
Z(\delta) \not = 0 \text{ for all }\delta \in \Delta(C)\\
\Ree (Z\circ s) \text{ and } \Imm(Z\circ s) \text{ are linearly independent in } V_0^{\vee}
\end{array}
\right\}.
\]
Here $s$ denotes the inclusion $V_0 \hookrightarrow G(C)\otimes \bbC$.

\begin{prop}\label{prop:cov}
The restriction
\[
\pi: \pi^{-1}(\pP_0(C)) \rightarrow \pP_0(C)
\]
is a covering map.
\end{prop}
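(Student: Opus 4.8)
The plan is to upgrade the forgetful map to a covering map over $\pP_0(C)$ by producing, for each stability condition lying over $\pP_0(C)$, a quadratic form that witnesses the support property and is negative definite on the kernel of the central charge, and then feeding this into Theorem \ref{thm:deformation}. Since $\pi\colon \Stab(C)\to G(C)^{\vee}\otimes\bbC$ is always a local homeomorphism, its restriction $\pi\colon \pi^{-1}(\pP_0(C))\to\pP_0(C)$ is automatically a local homeomorphism onto the open set $\pP_0(C)$; what must be added is an evenly covered neighbourhood around every point of $\pP_0(C)$.

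Fix $\sigma=(Z,\pP)$ with $Z=\pi(\sigma)\in\pP_0(C)$ and consider, using the form $\langle-,-\rangle$ of Lemma-Definition \ref{lem:squre}, the quadratic form on $G(C)\otimes\bbR$
\[
Q_{\sigma}(v)=\langle v,v\rangle+C_{\sigma}\lvert Z(v)\rvert^{2},\qquad C_{\sigma}>0\text{ to be chosen.}
\]
On $\Ker Z$ this equals $\langle-,-\rangle$, which is negative semi-definite with two–dimensional radical $V_0$ (spanned by $[\oO_x]$ and $\sum_i a_i[\oO_{\Theta_i}(-1)]$); the requirement that $\Ree(Z\circ s)$ and $\Imm(Z\circ s)$ be linearly independent in $V_0^{\vee}$ is precisely the statement that $Z|_{V_0}\colon V_0\to\bbC$ is an isomorphism, hence $V_0\cap\Ker Z=0$, so $Q_\sigma$ is negative definite on $\Ker Z$ for every $C_\sigma>0$. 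For the inequality on semistable objects I would argue as follows. By the preceding proposition every $\sigma$-stable object $E$ has $v(E)\in\Delta(C)\cup(V_0\cap G(C))$, since $\langle E,E\rangle\in\{-2,0\}$ and for a negative semi-definite form a vector of square $0$ lies in the radical. On the rank-two lattice $V_0\cap G(C)$ the map $Z$ is injective, so $\lvert Z\rvert$ has a positive minimum there; and $\Delta(C)$ is a finite union of cosets of $V_0\cap G(C)$, because it is the preimage of the finitely many $(-2)$-classes of the negative-definite lattice $G(C)/(V_0\cap G(C))$, and on each such coset $\lvert Z\rvert$ is bounded away from $0$ since $Z(\delta)\ne 0$ for $\delta\in\Delta(C)$ together with $V_0\cap\Ker Z=0$ rules out $Z(\delta)\in Z(V_0\cap G(C))$. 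Hence $c_\sigma:=\inf\{\lvert Z(v(E))\rvert : E\ \sigma\text{-stable}\}>0$. If $E$ is $\sigma$-semistable with Jordan–Hölder factors $A_1,\dots,A_m$, these share the phase $\phi(E)$, so $\lvert Z(E)\rvert=\sum_i\lvert Z(A_i)\rvert\ge mc_\sigma$, whereas $-\langle E,E\rangle\le\bigl(\sum_i\sqrt{-\langle A_i,A_i\rangle}\bigr)^{2}\le 2m^{2}$ by the triangle inequality for the positive semi-definite form $-\langle-,-\rangle$. Taking $C_\sigma=2/c_\sigma^{2}$ gives $Q_\sigma(v(E))\ge m^{2}(C_\sigma c_\sigma^{2}-2)\ge 0$, so $\sigma$ satisfies the support property with respect to $Q_\sigma$.

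Now Theorem \ref{thm:deformation} yields an open neighbourhood $\widetilde U_\sigma\ni\sigma$ that maps by a covering onto the connected component $U_\sigma$ of $\{W : Q_\sigma|_{\Ker W}\text{ negative definite}\}$ containing $Z$, with every stability condition in $\widetilde U_\sigma$ again satisfying the support property relative to $Q_\sigma$. Since $\pP_0(C)$ is open, shrinking gives an evenly covered ball $Z\in B_\sigma\subseteq U_\sigma\cap\pP_0(C)$. Running this for all $\sigma\in\pi^{-1}(\pP_0(C))$ and assembling the local pieces, one concludes that $\pi\colon\pi^{-1}(\pP_0(C))\to\pP_0(C)$ is a covering map by the mechanism used in the K3 case \cite[\S8,\,\S9]{Bri:08}: a local homeomorphism onto a locally simply connected complex manifold is a covering as soon as every point of the base has an evenly covered neighbourhood, and the compatibility of the finitely-or-infinitely many local sheets over a fixed small ball follows from the uniformity of the support-property estimate — along a path in $\pP_0(C)$ the masses of semistable objects stay bounded below, using the metric and the mass function, so the sheets over a sufficiently small ball can neither collide nor degenerate.

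The step I expect to be the main obstacle is this last gluing. The quadratic forms $Q_\sigma$, and in particular the constants $C_\sigma$, depend on $\sigma$, and the fibre of $\pi$ over a point of $\pP_0(C)$ may be infinite, so the passage from "locally a covering near each $\sigma$" to "globally a covering over $\pP_0(C)$" requires the metric/mass argument above rather than a formal manipulation; this is where the real content of the statement sits, the construction of $Q_\sigma$ being essentially bookkeeping once the two defining conditions of $\pP_0(C)$ are interpreted correctly. A minor point to record along the way is the finiteness of $\Delta(C)$ modulo the radical $V_0$, which follows from identifying $(G(C)/V_0,\langle-,-\rangle)$ with the negative of a finite ADE root lattice (equivalently, from discreteness of a negative-definite lattice).
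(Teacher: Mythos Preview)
Your strategy is the same as the paper's: build a quadratic form $Q=\langle-,-\rangle+C\lvert Z(-)\rvert^{2}$, check negative definiteness on $\Ker Z$ via $V_0\cap\Ker Z=0$, check $Q\ge 0$ on stable classes using $\langle E,E\rangle\in\{0,-2\}$, and invoke Theorem~\ref{thm:deformation}. Your finiteness argument for the infimum over $\Delta(C)$ (finitely many cosets of the radical lattice in a negative-definite quotient) is in fact cleaner than the paper's explicit root-system computation.

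Where you diverge is in making $C_\sigma=2/c_\sigma^2$ depend on $\sigma$ through $c_\sigma=\inf\{\lvert Z(v(E))\rvert:E\ \sigma\text{-stable}\}$, and this is precisely what generates the ``main obstacle'' you worry about. But your own lower bound for $c_\sigma$ depends only on $Z$: you showed $c_\sigma\ge M:=\inf_{\delta\in\Delta(C)}\lvert Z(\delta)\rvert>0$ (the bound on $V_0\cap G(C)$ is not even needed, since for $\langle v,v\rangle=0$ one has $Q(v)\ge 0$ regardless of $\lvert Z(v)\rvert$). So take $C=2/M^2$ and $Q=\langle-,-\rangle+\tfrac{2}{M^2}\lvert Z(-)\rvert^2$ as the paper does. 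Then $Q$ depends only on $Z$, and the crucial point is that $Q(v)\ge 0$ holds for \emph{every} class $v$ with $\langle v,v\rangle\in\{0,-2\}$---a condition that mentions neither $\sigma$ nor stability. Consequently \emph{every} stability condition $\tau$ with $\pi(\tau)\in U:=\pP_Z(Q)$ satisfies the support property with respect to this single $Q$ (your triangle-inequality step, or equivalently \cite[Lemma~A.6]{BMS:16}, passes from stable to semistable). Theorem~\ref{thm:deformation} then applies to each such $\tau$ with the \emph{same} $U$, so every connected component of $\pi^{-1}(U)$ covers $U$; shrinking $U$ to a ball gives a genuine evenly covered neighbourhood of $Z$.

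In short, the gluing difficulty you flag is an artefact of letting $Q_\sigma$ vary with $\sigma$; once you notice the constant can be chosen from $Z$ alone, the ``metric/mass'' patch is unnecessary and the covering property follows directly. This is exactly how the paper organises the argument.
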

\begin{proof}
Let $\sigma =(Z,\aA)$ be a stability condition with $Z\in \pP_0(C)$. Since $\Ree(Z \circ s)$ and $\Imm(Z\circ s)$ are linearly independent, $Z$ induces an isomorphism between $V_0$ and $\bbC$ as $\bbR$-vector spaces. 
Therefore, for any non-zero vector $v \in G(C)\otimes \bbC$, if $Z(v)$ is zero then $\langle v,v\rangle <0$.

First, we claim that
\[
M = \inf \{|Z(\delta)|\mid \delta \in \Delta(C)\} >0.
\]
Consider a basis $\{[\oO_x],a_1[\oO_{\Theta_1}(-1)]+\cdots + a_n[\oO_{\Theta_n}(-1)],[\oO_{\Theta_2}(-1)],\ldots,[\oO_{\Theta_n}(-1)]\}$ on $G(C)\otimes \bbC$ where $a_i$ is a multiplicity of $C_i$.
For $\delta \in \Delta(C)$, we will denote by $\delta_i$ the $i$-th component of the class $\delta$ with respect to the basis.
Replacing $C_1$ by other component, we assume the dual graph of $C_2\cup C_3\cup\cdots C_n$ is a Dynkin diagram $\Gamma$ corresponding affine Dynkin diagram of $C$. 
Denote a subspace $V_{-}$ generated by $[\oO_{\Theta_2}(-1)],\ldots,[\oO_{\Theta_n}(-1)]$, and the norm $(-,-)$ which is induced by the restriction of $-\langle-,-\rangle$.
The basis $\{[\oO_{\Theta_2}(-1)],\ldots,[\oO_{\Theta_n}(-1)]\}$ defines a root system on $V_{-}$ corresponding $\Gamma$.
By the construction of root systems of the Dynkin diagrams \cite[12.1]{hum:12}, the set $\{\alpha\mid (\alpha,\alpha) = 2\}$ is the root system defined by the basis.
Therefore, when we fix integers $\delta_0,\delta_1$, there are only finitely many integral classes $\delta$ lying $\Delta(C)$. 
We conclude there are also finitely many integral classes $\delta_0,\delta_1$ for given $\delta_2,...,\delta_n$, if $|Z(v)|<1$. Since $|Z(\delta)| \not = 0$, the claim follows.

Now define a quadratic form
\[
Q(v) := \langle v,v\rangle + \frac{2}{M^2}|Z(v)|^2.
\]
Since $\langle - ,- \rangle$ is negative definite on $\Ker Z$, $Q$ is also negative definite on $\Ker Z$. And, $Q(v) \geq 0 $ for all classes $v$ with $\langle v ,v \rangle =0,-2$.
Let $\pP_Z(Q)$ be a connected component containing $Z$ of the set $\pP(Q)$ in $G(C)^{\vee}\otimes \bbC$ defined by
\[
\pP(Q) = \{Z \in \pP_0(C)\mid Q \text{ is negative definite on }\Ker Z\}.
\]
Then, for any stability conditions $\sigma$ whose central charge $Z$ lies in $\pP_Z(Q)$, $\sigma$ satisfies the support property with respect to $Q$.

Consider an open neighborhood $U = \pP_Z(Q) \subset \pP_0(C)$ of $Z\in \pP_0(C)$. For all stability conditions $\sigma \in \Stab^{\dagger}(C)$ with $\pi(\sigma) = Z$, a connected component $\widetilde{U}$ containing $\sigma$ of $\pi^{-1}(U)$ is isomorphic to $U$. 
Suppose $\tau$ is a stability condition in $\Stab(C)$ with $\pi(\tau) = Z^{\prime} \in U$. 
It follows from Theorem \ref{thm:deformation} that there is a connected component $\widetilde{U}$ containing $\tau$ of $\pi^{-1}(U)$ such that the restriction $\pi : \widetilde{U} \rightarrow U$ is a covering map. 
In particular, since $Z$ lies in $\pP_Z(Q)$, any connected components contain a stability condition $\sigma$ with $\pi(\sigma) =Z$, which proves that the forgetful map $\pi: \pi^{-1}(\pP_0(C)) \rightarrow \pP_0(C)$ is covering map on the target.
\end{proof}

\begin{lem}
Let $\pP_0^{+}(C)$ be a connected component containing $Z = -\chi + \sqrt{-1}(\rk_1+\cdots +\rk_n)$ of $\pP_0(C)$.
Then, 
\[
\pi(\Stab^{\dagger}(C)) = \pP_0^{+}(C).
\]
\end{lem}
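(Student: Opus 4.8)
The plan is to prove the two inclusions $\pP_0^{+}(C)\subseteq\pi(\Stab^{\dagger}(C))$ and $\pi(\Stab^{\dagger}(C))\subseteq\pP_0^{+}(C)$ separately.

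For the inclusion $\pP_0^{+}(C)\subseteq\pi(\Stab^{\dagger}(C))$ I would use the covering property from Proposition~\ref{prop:cov}. Being a connected component of the locally connected manifold $\Stab(C)$, the set $\Stab^{\dagger}(C)$ is open and closed in $\Stab(C)$, so $W:=\Stab^{\dagger}(C)\cap\pi^{-1}(\pP_0^{+}(C))$ is open and closed in $\pi^{-1}(\pP_0^{+}(C))$, i.e.\ a union of connected components of it. By the direct computation in the last paragraph one checks $\pi(V(C))\subseteq\pP_0(C)$, and since $V(C)$ is connected and contains the stability condition with central charge $-\chi+\sqrt{-1}(\rk_1+\cdots+\rk_n)$, in fact $\pi(V(C))\subseteq\pP_0^{+}(C)$; hence $W\supseteq V(C)$ is nonempty and contains a full connected component $W_0$ of $\pi^{-1}(\pP_0^{+}(C))$. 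The restriction to $W_0$ of the covering map $\pi\colon\pi^{-1}(\pP_0^{+}(C))\to\pP_0^{+}(C)$ is again a covering map onto the connected base $\pP_0^{+}(C)$ (the image of a connected component under a covering map is open and closed), in particular surjective, so $\pP_0^{+}(C)=\pi(W_0)\subseteq\pi(\Stab^{\dagger}(C))$.

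For the inclusion $\pi(\Stab^{\dagger}(C))\subseteq\pP_0^{+}(C)$, note that $\pi(\Stab^{\dagger}(C))$ is connected; since $\pP_0(C)=\pP_0^{+}(C)\sqcup\pP_0^{-}(C)$ and $\pi(\Stab^{\dagger}(C))$ meets $\pP_0^{+}(C)$, it suffices to prove the weaker statement $\pi(\Stab^{\dagger}(C))\subseteq\pP_0(C)$. By Proposition~\ref{prop:spanedspherical}, every $\sigma\in\Stab^{\dagger}(C)$ can be written $\sigma=\Phi(\tau)$ with $\tau\in\overline{U(C)}$ and $\Phi$ a composite of restricted spherical twists $T_{\oO_{\Theta_i}(k)}$. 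As observed in the proof of that proposition, $\Phi$ acts on $G(C)$ by a product of reflections in $(-2)$-classes; such an element preserves the form $\langle-,-\rangle$, hence preserves $\Delta(C)$ and fixes the radical $V_0$ pointwise. Since $Z_{\sigma}=Z_{\tau}\circ\Phi_*^{-1}$, this gives $Z_{\sigma}(\delta)=Z_{\tau}(\Phi_*^{-1}\delta)$ with $\Phi_*^{-1}\delta\in\Delta(C)$ and $Z_{\sigma}\circ s=Z_{\tau}\circ s$; consequently $Z_{\sigma}\in\pP_0(C)$ if and only if $Z_{\tau}\in\pP_0(C)$, and it is enough to prove $\pi(\overline{U(C)})\subseteq\pP_0(C)$.

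To finish, I would check $\pi(\overline{U(C)})\subseteq\pP_0(C)$. On $V(C)$ this is a direct computation: for $\tau\in V(C)$ one has $Z_{\tau}(\oO_x)=-\chi(\oO_x)=-1$, while writing $w=a_1[\oO_{\Theta_1}(-1)]+\cdots+a_n[\oO_{\Theta_n}(-1)]$ for the imaginary root gives $Z_{\tau}(w)=\sum_i a_i z_i\in\bbH$ (since $a_i>0$, $z_i\in\bbH$), so $\{-1,Z_{\tau}(w)\}$ is an $\bbR$-basis of $\bbC$ and $\Ree(Z_{\tau}\circ s),\Imm(Z_{\tau}\circ s)$ are linearly independent; and for $\delta\in\Delta(C)$, writing $\delta=\pm\beta+m[\oO_x]$ with $\beta=\sum_i b_i[\oO_{\Theta_i}(-1)]$ a positive real root ($b_i\ge0$, not all zero) yields $\Imm Z_{\tau}(\delta)=\pm\sum_i b_i\Imm z_i\ne0$. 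Since $U(C)=V(C)\cdot\widetilde{\GL}^{+}(2,\bbR)$ and the conditions defining $\pP_0(C)$ are $\widetilde{\GL}^{+}(2,\bbR)$-invariant, $\pi(U(C))\subseteq\pP_0(C)$. For $\tau\in\partial U(C)$, after acting by $\widetilde{\GL}^{+}(2,\bbR)$ we may assume all skyscraper sheaves are $\tau$-semistable of phase one with $Z_{\tau}(\oO_x)=-1$ (semistability and coincidence of phases being closed conditions), so by the argument of Proposition~\ref{prop:boundary} we get $\Coh(C)\subseteq\pP_{\tau}([0,1])$ and $Z_{\tau}(E)=-\chi(E)+\sum_i w_i\rk_i(E)$ with each $w_i=Z_{\tau}(\oO_{\Theta_i}(-1))$ in the closed upper half-plane; one then repeats the two estimates above. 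I expect the principal obstacle to be excluding the degenerate case where \emph{every} $w_i$ is real: were it to occur, the isotropic vector $w+\big(\sum_i a_i\Ree w_i\big)[\oO_x]$ would lie in $V_0\cap\Ker Z_{\tau}$, contradicting negative-definiteness on $\Ker Z_{\tau}$ of the quadratic form witnessing the support property of $\tau$. Ruling this out, together with the analysis of the non-general points of $\partial U(C)$ via Proposition~\ref{prop:boundary} and the explicit form of the $(C_{i,k})$-type central charges, is the technical core of the argument.
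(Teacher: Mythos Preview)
Your overall architecture matches the paper's: establish the first inclusion from Proposition~\ref{prop:cov}, then reduce the reverse inclusion via Proposition~\ref{prop:spanedspherical} and the $\widetilde{\GL}^{+}(2,\bbR)$-action to checking $\pi(\overline{V(C)})\subset\pP_0(C)$. Your treatment of the first inclusion and of the interior $V(C)$ is correct and slightly more careful than the paper's.

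The gap is in your handling of the degenerate boundary case where every $w_i$ is real. You claim that the existence of a nonzero vector $v\in V_0\cap\Ker Z_\tau$ ``contradicts negative-definiteness on $\Ker Z_\tau$ of the quadratic form witnessing the support property of $\tau$''. But the support property only asserts the existence of \emph{some} quadratic form $Q$ with $Q|_{\Ker Z_\tau}$ negative definite and $Q\ge 0$ on semistable classes; it does not force $Q$ to agree with, or even be comparable to, the pairing $\langle-,-\rangle$. Your vector $v$ is isotropic for $\langle-,-\rangle$, not for $Q$; there is no contradiction unless you can exhibit $v$ (or a positive multiple of it) as the class of a $\tau$-semistable object, and you have not done so. The specific form $Q=\langle-,-\rangle+\tfrac{2}{M^2}|Z|^2$ used in Proposition~\ref{prop:cov} does vanish on $V_0\cap\Ker Z$, but its construction already presupposes $Z\in\pP_0(C)$, so invoking it here would be circular. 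The paper instead rules out this case by producing an infinite strictly descending chain $\cdots\hookrightarrow\oO_C(-nP)\hookrightarrow\oO_C(-(n-1)P)\hookrightarrow\cdots$ of objects in $\pP(0)$, which contradicts the finite-length property of the slices that the support condition guarantees.

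There is a second, related lacuna. Your plan to ``repeat the two estimates'' for $Z(\delta)\neq 0$ on $\partial U(C)$ relies on $\Imm Z_\tau(\delta)=\pm\sum_i b_i\Imm w_i\neq 0$. Even after excluding the fully degenerate case, it can happen that $\Imm w_j=0$ precisely for those $j$ in the support of $\delta$ while other $w_i$ remain in $\bbH$; then $\Imm Z_\tau(\delta)=0$ and you must still exclude $\Ree Z_\tau(\delta)=0$. The paper treats this with a separate argument: one shows (using the description of $\pP((0,1])$ as a tilt of $\Coh(C)$, as in Proposition~\ref{prop:boundary}) that after possibly replacing $\delta$ by $-\delta$ there is a coherent sheaf supported on $\bigcup_{j\in J}C_j$ with class $\delta$, and its semistability forces $Z(\delta)\neq 0$. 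Your sketch does not supply this step.
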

\begin{proof}
Since $\pi$ is a covering map on $\pP_0^{+}(C)$, $\pi(\Stab^{\dagger}(C))$ contains $\pP_0^{+}(C)$. Then, it is enough to show the reverse inclusion.
First, we claim that $\pi(\overline{V(C)}) \subset \pP_0^{+}(C)$. By the construction of stability conditions in $V(C)$, we have $\pi(V(C)) \subset  \pP_0^{+}(C)$, so we show that the boundary $\partial {V(C)}$ is contained in $\pP_0^{+}(C)$.

Let $\sigma = (Z,\aA)$ be a stability condition in $\partial {V(C)}$ with $Z = -\chi +z_1\rk_1+\cdots +z_n\rk_n$. Suppose $\Ree(Z\circ s)$ and $\Imm(Z\circ s)$ are not linearly independent in $V_0^{\vee}$. Since we have $\Imm(z_1+\cdots +z_n) =0$, all coefficients $z_1,\ldots,z_n$ lies in the real line, and all objects in $\dD^b(C)$ are semistable.
There is a sequence in the corresponding slicing $\pP(0)$
\[
\cdots \hookrightarrow \oO_C(-nP)\hookrightarrow \oO_C(-(n-1)P) \hookrightarrow \cdots \hookrightarrow\oO_C(-(k+1)P) \hookrightarrow \oO_C(-kP)
\] for some point $P\in C$ and some integer $k$ with $Z(\oO_C(-kP))>0$.
 This contradicts the fact that $\pP(0)$ is of finite length and the support condition, so $\Ree(Z\circ s)$ and $\Imm(Z\circ s)$ are linearly independent.

Let $\delta \in \Delta(C)$ and $\sigma =(Z,\pP)\in \Stab^{\dagger}(C)$. Suppose $Z(\delta)=0$ to obtain a contradiction. 
Let $\delta_i$ denote the $i$-th component of the vector $\delta \in G(C)\otimes\bbC$ with respect to $[\oO_x],[\oO_{\Theta_1}],...,[\oO_{\Theta_n}]$. 
Let $J$ be an index $j$ where $\delta_j$ is non-zero. 
By the assumption, coefficients $z_j$ of the central charge $Z$ lie in the real line $\bbR$ for all $j\in J$,and $\sigma \in \partial V(C)$. 
It follows from the above argument that some coefficients $z_i$ lie in the upper half plane.

The boundary of $V(C)$ in an open neighborhood of $\sigma$ consists of an union of codimension one submanifolds corresponding $(C_{j,k_j})$ for all $j$ and some integers $k_j$. 
Then the line bundle $\oO_{\Theta_j}(k_j+1)$ and the shifted line bundle $\oO_{\Theta_j}(k_j)[1]$ on $C_j$ are $\sigma$-semistable objects of phase one. 
Skyscraper sheaves $\oO_x$ are also $\sigma$-semistable.
 Applying the same argument in Theorem \ref{prop:boundary}, $\pP((0,1])$ is obtained from tilting with respect to a torsion pair $(\Coh(C)\cap\pP((0,1]),\Coh(C)\cap\pP((-1,0]))$. 
 All coherent sheaves on $\bigcup_{j\in J}C_j$ are $\sigma$-semistable of phase zero or one.
 By replacing $\delta$ by $-\delta$, all signs of non-zero components $\delta_1,\ldots,\delta_n$ are positive.
 Since there is a coherent sheaf $E$ with $[E]=\delta$, then $Z(\delta) \not = 0$.
 This contradicts our assumption. 
 Thus, we have
 \[
 \overline{V(C)}\subset \pP_0^{+}(C).
 \]

By applying the action of $\widetilde{\GL}^{+}(2,\bbR)$ and $\bB$, we have $\pi(\Stab^{\dagger}(C)) \subset \pP_0^{+}(C)$.
\end{proof}

Let $\mathrm{Auteq}^0(C)$ be the kernel of $\mathrm{Auteq}(C) \rightarrow \mathrm{Aut}(G(C))$ and $\mathrm{Auteq}^0_*(C)$ be the subgroup of $\mathrm{Auteq}^0(C)$ preserving the connected component $\Stab^{\dagger}(C)$. 
Contrary to the case of K3 surfaces, there are non-trivial autoequivalences that act trivially on $\Stab^{\dagger}(C)$

\begin{lem}

Let 
\begin{align*}
&\mathrm{Aut}_{\mathrm{tri}}(C) =\{f \in \mathrm{Aut}(C) \mid f_* \mathrm{\ is\ trivial\ in\ }G(C)\},\\
&\mathrm{Pic}^0_{\mathrm{tri}}(C) =\{\lL \in \mathrm{Pic}^0(C)\mid \lL|_{\Theta_i} \cong \oO_{\Theta_i} \mathrm{\ for\ all\ reduced\ curves\ }\Theta_i\}.
\end{align*}
Then, all autoequivalences $\Phi \in \mathrm{Pic}^0_{\mathrm{tri}}(C) \rtimes\mathrm{Aut}_{\mathrm{tri}}(C)$ induce the trivial action on $\Stab^{\dagger}(C)$. 
\end{lem}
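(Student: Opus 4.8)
The plan is to prove triviality first on the explicit subset $V(C)$ and then spread it over the whole connected component by a soft topological argument. To begin, note that each $\Phi\in\mathrm{Pic}^0_{\mathrm{tri}}(C)\rtimes\mathrm{Aut}_{\mathrm{tri}}(C)$ acts trivially on $G(C)$ — for the $\mathrm{Aut}_{\mathrm{tri}}(C)$ factor by definition, and for the $\mathrm{Pic}^0_{\mathrm{tri}}(C)$ factor this is the first computation of the next paragraph — hence it acts trivially on $G(C)^{\vee}\otimes\bbC$, so $\pi\circ\Phi=\pi$. Combining this with the deformation property (Theorem \ref{thm:deformation}), which makes $\pi$ a local homeomorphism, I would argue that $\Phi$ agrees with the identity on a neighbourhood of each of its fixed points: pick an open $W$ around a fixed point $\sigma_0$ on which $\pi$ is injective, shrink so that $\Phi(W)$ still lies in an injectivity locus, and observe that $\pi(\Phi(\sigma))=\pi(\sigma)$ forces $\Phi(\sigma)=\sigma$ on $W$. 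Thus the fixed locus of $\Phi$ in $\Stab^{\dagger}(C)$ is open; it is closed, being the coincidence set of the continuous maps $\Phi$ and $\mathrm{id}$ into the (metric, hence Hausdorff) space $\Stab^{\dagger}(C)$; and once $\Phi$ has a single fixed point there it preserves the connected component $\Stab^{\dagger}(C)$, being a homeomorphism of $\Stab(C)$. So it suffices to produce one fixed point, and connectedness of $\Stab^{\dagger}(C)$ then gives the claim.

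Next I would check that $\Phi$ fixes every $\sigma=(Z,\Coh(C))\in V(C)$, which certainly supplies such a point. Writing $\Phi$ as a composite of a pushforward $f_*$ with $f\in\mathrm{Aut}_{\mathrm{tri}}(C)$ and a twist $-\otimes\lL$ with $\lL\in\mathrm{Pic}^0_{\mathrm{tri}}(C)$ — both exact autoequivalences of $\dD^b(C)$ — it is enough to treat each factor. For $f_*$: it is trivial on $G(C)$ by definition, so $Z\circ f_*^{-1}=Z$, and as an exact autoequivalence of the abelian category $\Coh(C)$ it sends the heart $\Coh(C)$ to itself; since also $[f_*E]=[E]$ makes $\phi(f_*E)=\phi(E)$ and $f_*$ respects sub- and quotient objects, it preserves the slicing as well, so $f_*(\sigma)=\sigma$. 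For $-\otimes\lL$: I would first verify it is trivial on $G(C)$ using the generators $[\oO_{\Theta_i}]$ and $[\oO_x]$. One has $\oO_x\otimes\lL\cong\oO_x$ for all $x$, and writing $\iota_i\colon\Theta_i\hookrightarrow C$ the projection formula together with the hypothesis $\iota_i^*\lL=\lL|_{\Theta_i}\cong\oO_{\Theta_i}$ gives $\oO_{\Theta_i}(k)\otimes\lL\cong\iota_{i*}(\oO_{\Theta_i}(k)\otimes\iota_i^*\lL)\cong\oO_{\Theta_i}(k)$. The same three observations (central charge, heart, slicing) then apply verbatim, so $(-\otimes\lL)(\sigma)=\sigma$; composing, $\Phi|_{V(C)}=\mathrm{id}$.

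The main obstacle, such as it is, is purely bookkeeping: making sure "trivial on $G(C)$" really does force invariance of the heart and the slicing, which uses exactness of $f_*$ and $-\otimes\lL$ so that sub/quotient structure and phases transport; and being careful that for singular $C$ the triviality of $-\otimes\lL$ on $G(C)$ is not a ring-theoretic identity but must be read off the generators through the projection formula — this is exactly where the hypothesis $\lL|_{\Theta_i}\cong\oO_{\Theta_i}$ is used. Should one prefer to avoid the local-homeomorphism bookkeeping of the first paragraph, one may instead invoke Proposition \ref{prop:cov}: since $\pi\circ\Phi=\pi$, the map $\Phi$ is a deck transformation of the covering $\pi\colon\Stab^{\dagger}(C)\to\pP_0^+(C)$, and a deck transformation with a fixed point is the identity. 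Everything else is routine.
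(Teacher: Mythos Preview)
Your proof is correct and follows essentially the same strategy as the paper: verify that $\Phi$ fixes the stability conditions with heart $\Coh(C)$ (the paper checks $\overline{U(C)}$, you check $V(C)$, which is enough), then propagate over $\Stab^{\dagger}(C)$ using that $\pi$ is a local homeomorphism and $\Phi$ acts trivially on $G(C)$. Your open--closed fixed-locus argument is equivalent to the paper's $\sup\{t: f_*\gamma(t)=\gamma(t)\}$ path argument, and your explicit verification via the projection formula that $-\otimes\lL$ is trivial on the generators of $G(C)$ fills in a step the paper leaves implicit.
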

\begin{proof}
Let $f \in \mathrm{Aut}_{\mathrm{tri}}(C)$. Since $f_*$ sends coherent sheaves to coherent sheaves,  the action on $\overline{U(C)}$ is trivial. 
Take any stability condition $\tau \in \Stab^{\dagger}(C)$. There is a path $\gamma\colon[0,1] \rightarrow \Stab^{\dagger}(C)$ such that $\gamma(0) \in U(C)$ and $\gamma(1) = \tau$.
Consider $\sup\{t\in[0,1] \mid f_*\gamma(t) = \gamma(t)\}$.
Since $\pi$ is a local homeomorphism and $f$ acts trivially on $G(C)$, we have $\sup\{t\in[0,1] \mid f_*\gamma(t) = \gamma(t)\} = 1$. Therefore,  the action $f_*$ is trivial on $\Stab^{\dagger}(C)$.

Since for any line bundle $\lL \in \mathrm{Pic}^0_{\mathrm{tri}}(C) $ the functor $-\otimes \lL$
acts trivially on $G(C)$, its action on $\overline{U(C)}$ is trivial. 
Thus, applying a similar argument, we can see that the action $\otimes \lL$ is trivial on $\Stab^{\dagger}(C)$.
\end{proof}

\begin{thm}\label{thm:main1}
The map $\pi :\Stab^{\dagger}(C) \rightarrow \pP_0^{+}(C)$ is a covering map.
Moreover, the quotient group $\mathrm{Auteq}^0_*(C) /(\mathrm{Pic}^0_{\mathrm{tri}}(C) \rtimes\mathrm{Aut}_{\mathrm{tri}}(C))$ is isomorphic to the deck transformation group.
\end{thm}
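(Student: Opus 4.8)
The plan is to show that $\pi$ is a covering map almost for free, to construct a group homomorphism $\rho\colon\mathrm{Auteq}^0_*(C)\to\mathrm{Deck}(\pi)$, to compute its kernel, and finally to prove that it is surjective. For the covering statement: by Proposition~\ref{prop:cov} the restriction $\pi\colon\pi^{-1}(\pP_0(C))\to\pP_0(C)$ is a covering, hence so is its restriction over the clopen subset $\pP_0^{+}(C)\subseteq\pP_0(C)$. Since $\Stab^{\dagger}(C)$ is a connected component of the complex manifold $\Stab(C)$, it is open and closed in $\Stab(C)$, hence also in $\pi^{-1}(\pP_0^{+}(C))$, and, being connected, it is a connected component of $\pi^{-1}(\pP_0^{+}(C))$; using $\pi(\Stab^{\dagger}(C))=\pP_0^{+}(C)$ and the general fact that a connected component of the total space of a covering over a connected base maps to the base by a covering map, we conclude that $\pi\colon\Stab^{\dagger}(C)\to\pP_0^{+}(C)$ is a covering.

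Next, for $\Phi\in\mathrm{Auteq}^0_*(C)$ the induced homeomorphism of $\Stab(C)$ preserves $\Stab^{\dagger}(C)$ by definition, and, as $\Phi$ acts trivially on $G(C)$, it satisfies $\pi\circ\Phi=\pi$; hence $\Phi|_{\Stab^{\dagger}(C)}\in\mathrm{Deck}(\pi)$, and $\rho\colon\Phi\mapsto\Phi|_{\Stab^{\dagger}(C)}$ is a group homomorphism. The inclusion $\mathrm{Pic}^0_{\mathrm{tri}}(C)\rtimes\mathrm{Aut}_{\mathrm{tri}}(C)\subseteq\ker\rho$ is the Lemma immediately preceding the theorem. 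Conversely, if $\Phi\in\ker\rho$ then $\Phi$ fixes every $\sigma\in V(C)$; in particular it fixes a stability condition of heart $\Coh(C)$, so $\Phi(\Coh(C))=\Coh(C)$. By the classification of exact autoequivalences of $\dD^b(C)$ preserving the standard heart, $\Phi\cong f_*\circ(-\otimes L)$ with $f\in\mathrm{Aut}(C)$ and $L\in\mathrm{Pic}(C)$. Triviality of $\Phi$ on $G(C)$ forces $f_*$ trivial on $G(C)$, i.e.\ $f\in\mathrm{Aut}_{\mathrm{tri}}(C)$, and $(-\otimes L)$ trivial on $G(C)$, which, comparing Euler characteristics on each $\Theta_i\cong\bbP^1$, gives $\deg(L|_{\Theta_i})=0$ and hence $L|_{\Theta_i}\cong\oO_{\Theta_i}$, so $L\in\mathrm{Pic}^0_{\mathrm{tri}}(C)$. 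Thus $\ker\rho=\mathrm{Pic}^0_{\mathrm{tri}}(C)\rtimes\mathrm{Aut}_{\mathrm{tri}}(C)$ and $\rho$ induces an injection $\mathrm{Auteq}^0_*(C)/(\mathrm{Pic}^0_{\mathrm{tri}}(C)\rtimes\mathrm{Aut}_{\mathrm{tri}}(C))\hookrightarrow\mathrm{Deck}(\pi)$.

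It remains to prove surjectivity of $\rho$, which is the heart of the matter. The key geometric fact is that the central charges of points of $\partial U(C)$ stay out of $\pi(U(C))$: a point of $\partial U(C)$ of type $(C_{i,k})$ has $Z(\oO_{\Theta_i}(k+1))/Z(\oO_x)\in\bbR_{>0}$, while for $Z\in\pi(U(C))=\pi(V(C))\cdot\widetilde{\GL}^{+}(2,\bbR)$ the images of $[\oO_{\Theta_i}(k+1)]$ and $[\oO_x]$ never lie on a common ray, because $\Imm z_i>0$ on $\pi(V(C))$; as $\pi(U(C))$ is open and disjoint from this locus it is disjoint from the image of $\partial U(C)$, so $U(C)$—and more generally any translate $\Psi(U(C))$ with $\Psi\in\bB$ acting trivially on $G(C)$—is a connected component of $\pi^{-1}(\pi(U(C)))$. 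A deck transformation $\psi$ commutes with $\pi$ and hence permutes these connected components, so $\psi(U(C))$ is again such a component. The next—and crucial—step is to show that every connected component of $\pi^{-1}(\pi(U(C)))$ is of the form $\Psi(U(C))$ with $\Psi\in\bB\cap\mathrm{Auteq}^0(C)\subseteq\mathrm{Auteq}^0_*(C)$: picking a generic $Z_0\in\pi(V(C))$ with $Z_0(\delta)\notin\bbR$ for all $\delta\in\Delta(C)$, Proposition~\ref{prop:spanedspherical} and Proposition~\ref{prop:V(C)} write a point of the component over $Z_0$ as $\Psi(\sigma')$ with $\Psi\in\bB$ and $\sigma'\in V(C)$ (using that $\Psi_*$ fixes $[\oO_x]$), and then the requirement that $\Psi(U(C))$ be contained in $\pi^{-1}(\pi(U(C)))$—together with the fact that an isometry of $(G(C),\langle-,-\rangle)$ fixing $[\oO_x]$ that takes $\pi(U(C))$ into itself must be the identity—forces $\Psi_*=\mathrm{id}$. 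Granting this, write $\psi(U(C))=\Psi(U(C))$ with $\Psi\in\mathrm{Auteq}^0_*(C)$. Then $\Psi^{-1}\circ\psi$ is a deck transformation of $\pi$ preserving $U(C)$, hence restricts to a deck transformation of the covering $\pi|_{U(C)}\colon U(C)\to\pi(U(C))$; since $U(C)=V(C)\cdot\widetilde{\GL}^{+}(2,\bbR)$ with $\pi|_{V(C)}$ injective, this last deck group is generated by the double shift $[2]\in\mathrm{Auteq}^0_*(C)$, so $\Psi^{-1}\circ\psi=[2]^m$ on $U(C)$ for some $m\in\bbZ$, and therefore $\psi=\Psi\circ[2]^m$ on all of $\Stab^{\dagger}(C)$ by uniqueness of lifts. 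Hence $\psi=\rho(\Psi\circ[2]^m)$ lies in the image of $\rho$, and combining this with the kernel computation gives the asserted isomorphism.

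I expect the genuine obstacle to be the step just flagged: proving that the connected components of $\pi^{-1}(\pi(U(C)))$ are exactly the translates of $U(C)$ by elements of $\bB$ that act trivially on $G(C)$, equivalently that the isometry $\Psi_*$ appearing above is forced to be the identity. This is what ultimately pins every deck transformation down to an autoequivalence (and, as a by-product, shows that the covering $\pi$ is normal); it requires controlling how the partial overlaps of the various regions $\pi(U(C))\circ\Psi_*^{-1}$, $\Psi\in\bB$, sit inside $\pP_0^{+}(C)$, beyond the bookkeeping already carried out in Proposition~\ref{prop:spanedspherical}.
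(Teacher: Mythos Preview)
Your covering-map paragraph and the kernel computation are essentially the paper's arguments (the paper phrases the kernel step via ``$\Phi$ takes skyscrapers to skyscrapers'' and then cites \cite[Cor.~5.23]{Huy:06}, which is the precise input behind your ``classification of autoequivalences preserving $\Coh(C)$'').

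The genuine gap is exactly where you flag it, but your proposed resolution is wrong, not merely incomplete. The claim that an isometry of $(G(C),\langle-,-\rangle)$ fixing $[\oO_x]$ and preserving $\pi(U(C))$ must be the identity is false: any graph automorphism of the dual graph of $C$ (e.g.\ a rotation of the $I_N$ cycle, or the nontrivial involution for $\widetilde{D}_n$, $\widetilde{E}_6$, $\widetilde{E}_7$) gives such an isometry, and these are realized by elements of $\bB$ modulo $\mathrm{Auteq}^0(C)$. Consequently the components of $\pi^{-1}(\pi(U(C)))$ are \emph{not} all translates of $U(C)$ by elements of $\bB\cap\mathrm{Auteq}^0(C)$, and your argument cannot close as written.

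The paper's route to surjectivity is different and more direct. Given $\sigma,\tau$ with the same central charge and $\sigma\in V(C)$, one first uses Proposition~\ref{prop:spanedspherical} to find $\Phi\in\bB$ with $\Phi(\tau)\in V(C)$. The point is then not to force $\Phi_*=\mathrm{id}$, but to control $\Phi_*$ via the affine root system: writing $L=\bigoplus_i\bbZ[\oO_{\Theta_i}(-1)]$, $\alpha=\sum a_i[\oO_{\Theta_i}(-1)]$ and $\rR=\{\delta+m\alpha:\delta\in\rR_0,\ m\in\bbZ\}$, one uses $\Phi_*(\alpha)=\alpha$, $\Phi_*([\oO_x])=[\oO_x]$ and the positivity $\Imm Z(\Phi^{-1}[\oO_{\Theta_i}(-1)])>0$ to force all the $m_i$ to vanish, and then that $\Phi_*$ preserves each multiplicity stratum $L_p^+$ and the form $\langle-,-\rangle$ to conclude that $\Phi_*$ merely permutes the classes $[\oO_{\Theta_i}(-1)]$ as a graph automorphism. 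One then \emph{composes} $\Phi$ with the corresponding curve automorphism in $\mathrm{Aut}(C)$ (not in $\mathrm{Aut}_{\mathrm{tri}}(C)$) to land in $\mathrm{Auteq}^0_*(C)$ and obtain $\Phi(\tau)=\sigma$. So the autoequivalence realizing a given deck transformation lives in the group generated by $\bB$ and graph automorphisms of $C$, not in $\bB\cap\mathrm{Auteq}^0(C)$ alone; replacing your ``$\Psi_*=\mathrm{id}$'' step by this root-system/graph-automorphism argument is what is actually needed.
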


\begin{proof}
We show that the map
\[
\mathrm{Auteq}^0_*(C) \rightarrow \mathrm{Deck}(\pi)
\]
is surjective, and the kernel is isomorphic to $\mathrm{Pic}^0_{\mathrm{tri}}(C) \rtimes\mathrm{Aut}_{\mathrm{tri}}(C)$. Here, $\mathrm{Deck}(\pi)$ is the deck transformation group of $\pi$.

First, we show the surjectivity. Suppose $\sigma$ and $\tau$ are stability conditions with the same central charge. Since $\pi$ is a covering map, we can assume $\sigma =(Z,\Coh(C)) \in V(C)$ with $Z(E) = -\chi(E) + z_1\rk_1(E) + \cdots +z_n \rk_n(E)$.

By Theorem \ref{prop:spanedspherical}, there is an automorphism $\Phi \in \bB$ such that $\Phi(\tau) = (Z_{\Phi},\aA_{\Phi})$ lies in $\overline{U(C)}$. 
Moving $\sigma$ a bit in $V(C)$, we can also assume $\Phi(\tau) \in U(C)$. Since $\Phi\in \bB$ preserves classes $[\oO_x]$ and $\alpha=a_1[\oO_{\Theta_1}(-1)]+\cdots +a_n[\oO_{\Theta_n}(-1)]$, $[\oO_x]$ is $\Phi(\tau)$-stable of phase one and $Z(\oO_x)=-1$ by composing even shifts. 
Here, $a_i$ is a multiplicity of curve $C_i$.
Thus, we conclude that  $\Phi(\tau) \in V(C)$ and $\aA_{\Phi} = \Coh(C)$. 

Let $L$ be a sublattice of $G(C)$ generated by $[\oO_{\Theta_1}(-1)],\ldots,[\oO_{\Theta_n}(-1)]$. 
We can compute 
\[
\rR:=\{\delta \in L\mid \langle \delta,\delta \rangle=-2 \} = \{\delta + m\alpha\mid \delta \in \rR_0,\ m \in \bbZ\}
\]where $\rR_0$ is a subset of $L$ consisting a class $\delta$ with $\langle \delta,\delta \rangle=-2$ and $\lvert\rk_i\delta\rvert < \rk_i[\oO_{C_i}(-1)]$ for some $i$.
Define $L^+ = \bigoplus_{i}\bbZ_{\geq0}[\oO_{\Theta_i}(-1)]$.
Take $\beta_i\in\rR_0 $ and $m_i \in \bbZ$ such that $\Phi^{-1}([\oO_{\Theta_i}(-1)]) = \beta_i + m_i \alpha$. By replacing $\beta_i$ by $\beta_i+\alpha$, we assume $\beta_i\in L^+$.
Since $\Imm Z(\Phi^{-1}([\oO_{\Theta_i}(-1)]))  >0$, we have $m_i\geq0$.
The functor $\Phi$ preserves the class $\alpha$, and so $m_i =0$ for all $i$.
Indeed,we have
\[
\alpha = \Phi(\alpha) = \sum_{i} \beta_i + \sum m_i \alpha.
\]
We conclude $m_i =0$.
Let $L_{p}^{+} \subset L^+$ be a subset generated by $[\oO_{\Theta_i}(-1)]$ where the multiplicity $a_i$ of the corresponding curve $C_i$ is the integer $p$.
$\Phi$ preserves $L_{p}^{+}$   and the class $\sum_{[\oO_{\Theta_i}(-1)]\in L_{p}^{+}}[\oO_{\Theta_i}(-1)]$ for any integer $p$
Thus, $\Phi([\oO_{\Theta_i}(-1)]) = [\oO_{\Theta_{n_i}}(-1)]$ for some $n_i$.

Since $\Phi$ preserves the form $\langle-,-\rangle$, $\Phi$ is obtained from a graph automorphism.
By composing the corresponding automorphism $C\rightarrow C$, we can assume $\Phi$ acts trivially on $G(C)$. Therefore, we have $\Phi(\tau) =\sigma$.

Next, we compute the kernel of the surjective map $\mathrm{Auteq}^0_*(C) \rightarrow \mathrm{Deck}(\pi)$.
Suppose $\Phi \in \mathrm{Auteq}^0_*(C)$ is trivial on fibers of $\pi$. Consider $\sigma \in V(C)$. 
We have $\Phi(\sigma)= \sigma$. Since $\Phi(\oO_x)$ is $\sigma$-stable coherent sheaf of phase 1, $\Phi(\oO_x)$ is also a skyscraper sheaf, and $\Phi$ takes skyscrapers to skyscrapers. 
By \cite[Cor 5.23]{Huy:06}, there exists a line bundle $L\in \mathrm{Pic}(C)$ and $f \in \mathrm{Aut}(C)$ such that $\Phi(-) = f_*(-\otimes L) $. 
Since $f_*(-\otimes L)$ acts trivially on $G(C)$, we have $L\in \mathrm{Pic}^0_{\mathrm{tri}}(C)$ and $f \in \mathrm{Aut}_{\mathrm{tri}}(C)$.
\end{proof}

\section{Proofs of support properties}

In this section, we show that stability conditions constructed in Section 4 satisfy the support properties.
To show this statement, we use the following result.
\begin{lem}[{\cite[Lemma A.6.]{BMS:16}}]
Let $Q$ be a quadratic form on $\Lambda\otimes \bbR$. Assume $\sigma$ is a pre-stability condition with respect to $\Lambda$ such that the kernel of $Z$ is negative semi-definite with respect to $Q$. If $E$ is strictly $\sigma$-semistable with Jordan-H\"{o}lder factors $E_1,\ldots, E_m$ and if $Q(E_i)\geq0$ for all $i = 1,\ldots,m$, then $Q(E)\geq0$.
\end{lem}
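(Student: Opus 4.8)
The plan is to deduce the statement from a single pairwise inequality for the polarization of $Q$, exploiting that the Jordan--H\"older factors of a semistable object all share the same phase. Write $B$ for the symmetric bilinear form on $\Lambda\otimes\bbR$ with $B(x,x)=Q(x)$, and abbreviate $v_i=v(E_i)$ and $v=v(E)=\sum_{i=1}^m v_i$. Since $E$ is strictly $\sigma$-semistable with Jordan--H\"older factors $E_1,\dots,E_m$, each $E_i$ has phase $\phi(E)$, so $Z(E_i)$ is a \emph{positive} real multiple of $Z(E)$; in particular $Z(E_i)\neq0$, as holds for any nonzero object under a stability function. Hence for every pair $i,j$ the number $\lambda_{ij}:=|Z(E_j)|/|Z(E_i)|$ is positive and $Z(v_j-\lambda_{ij}v_i)=0$.

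The first step is to apply the negative semi-definiteness hypothesis to the vector $w_{ij}:=v_j-\lambda_{ij}v_i\in\Ker Z$, which gives $Q(w_{ij})\leq0$. Expanding the quadratic form,
\[
Q(w_{ij})=Q(v_j)-2\lambda_{ij}B(v_i,v_j)+\lambda_{ij}^{2}Q(v_i)\leq 0,
\]
so that
\[
2\lambda_{ij}B(v_i,v_j)\geq Q(v_j)+\lambda_{ij}^{2}Q(v_i)\geq 0,
\]
where the last step uses the hypothesis $Q(v_i),Q(v_j)\geq0$. Since $\lambda_{ij}>0$, we conclude $B(v_i,v_j)\geq0$ for all $i,j$ (the diagonal case being just $Q(v_i)\geq0$).

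The second step is to sum over all pairs: from $v=\sum_i v_i$ we obtain
\[
Q(v)=\sum_{i,j}B(v_i,v_j)\geq 0,
\]
as every term is nonnegative, which is the desired conclusion.

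I do not expect a genuine obstacle here; the argument is a short manipulation of the quadratic form relative to $\Ker Z$. The one point that needs care is the assertion that $Z(E_i)$ is a positive real multiple of $Z(E)$: this is precisely where semistability is used, namely that all Jordan--H\"older factors have phase exactly $\phi(E)$ rather than merely phase $\leq\phi(E)$, combined with the elementary fact that $Z$ never vanishes on a nonzero object. All remaining steps are formal.
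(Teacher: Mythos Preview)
Your argument is correct and is precisely the standard proof of this lemma (as found in the cited reference \cite{BMS:16}); the paper itself does not supply a proof but merely quotes the result, so there is nothing further to compare.
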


Let $\sigma = (Z,\pP)$ be a stability condition constructed in Section 4.
 Since the central charge $Z$ lies in $\pP_0^+(C)$, by the same argument in the proof of Proposition \ref{prop:cov}, there is a quadratic form $Q$ such that the kernel of $Z$ is negative definite with respect to $Q$ and $Q(E)\geq0$ for any $\sigma$-stable objects $E$.
Thus, it is enough to show $\pP(\phi)$ is of finite length for any $\phi$.

\begin{prop} \label{section61}
Let $\sigma = (Z,\Coh(C))$ be a stability condition whose central charge is defined by
\[
Z(E) = -\chi(E) + z_1\rk_1(E) + \cdots + z_n\rk_n(E).
\]
Then, the corresponding addictive subcategories $\pP(\phi)$ are of finite length.
\end{prop}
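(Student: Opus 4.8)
The plan is to deduce finite length of every $\pP(\phi)$ from a single uniform estimate: a positive lower bound for $|Z(E)|$ over all nonzero objects of $\Coh(C)$. Since $\pP(\phi+k)=\pP(\phi)[k]$ and $[k]$ is an exact autoequivalence of $\dD^b(C)$, it suffices to treat $\phi\in(0,1]$, where $\pP(\phi)$ is the full subcategory of $\Coh(C)$ of $\sigma$-semistable sheaves of phase $\phi$.

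First I would prove the estimate. Write $z_j=a_j+\sqrt{-1}\,b_j$ with $b_j>0$ and set $c=\min\{1,b_1,\dots,b_n\}>0$. I claim that $|Z(E)|\ge c$ for every nonzero $E\in\Coh(C)$. If $E$ is torsion it is $0$-dimensional, so $\rk_j(E)=0$ for all $j$, hence $Z(E)=-\chi(E)$, and $\chi(E)=\dim_{\bbC}H^0(E)$ equals the length of $E$; thus $|Z(E)|=\chi(E)\ge 1\ge c$. If $E$ is not torsion then $\rk_{j_0}(E)\ge 1$ for some $j_0$, and since every $\rk_j(E)$ is a non-negative integer, $\Imm Z(E)=\sum_j b_j\rk_j(E)\ge b_{j_0}\ge c$, so $|Z(E)|\ge c$.

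Now fix $\phi\in(0,1]$ and $A\in\pP(\phi)$. A nonzero sheaf $G$ with $Z(G)\in\bbR\cdot e^{\sqrt{-1}\pi\phi}$ has in fact $Z(G)\in\bbR_{>0}\cdot e^{\sqrt{-1}\pi\phi}$, since $Z(G)\in\bbH\cup\bbR_{<0}$. The category $\pP(\phi)$ is abelian (Bridgeland's formalism of slicings, \cite{Bri:07}), so for any chain of subobjects $A_1\subsetneq A_2\subsetneq\cdots\subsetneq A_m\subseteq A$ inside $\pP(\phi)$ the consecutive quotients $B_j=A_{j+1}/A_j$ and the quotient $A/A_m$ all lie in $\pP(\phi)$; additivity of $Z$ and the preceding remark then give $|Z(A)|=|Z(A_1)|+\sum_j|Z(B_j)|+|Z(A/A_m)|\ge\sum_j|Z(B_j)|$. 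Since each $|Z(B_j)|\ge c$ by the estimate, the chain has at most $|Z(A)|/c+1$ terms; an identical bound controls chains of quotients of $A$ in $\pP(\phi)$. Hence $\pP(\phi)$ is noetherian and artinian, i.e. of finite length, which proves the proposition.

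The heart of the argument is the uniform bound $|Z(E)|\ge c$, which relies on $\Imm z_j>0$ for every $j$ and on $\chi$ taking values $\ge 1$ on nonzero torsion sheaves; the structural point that consecutive factors of a chain in $\pP(\phi)$ remain in $\pP(\phi)$ with central charge on the ray $\bbR_{>0}\cdot e^{\sqrt{-1}\pi\phi}$ is part of the general theory of slicings. The same reasoning, with an extra factor accounting for the angular width of the cone of phases, shows that $\pP(I)$ is of finite length for every interval $I$ of length less than one, which is the local finiteness needed of a Bridgeland stability condition.
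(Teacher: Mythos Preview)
Your proof is correct, but it takes a different route from the paper's. The paper argues by a case split: for $\phi\in(0,1)$ it observes that in any short exact sequence $0\to F\to E\to G\to 0$ in $\pP(\phi)$ the quotient $G$ cannot have all $\rk_i$ equal to zero (else $G$ would be torsion and hence of phase one), so the total rank $\sum_i\rk_i$ strictly drops along any chain in $\pP(\phi)$, bounding its length; for $\phi=1$ it simply notes that $\pP(1)$ consists of torsion sheaves, which already form a finite-length abelian category. Your argument replaces this case analysis by a single uniform estimate $|Z(E)|\ge c=\min\{1,b_1,\dots,b_n\}$ valid for every nonzero sheaf, and then bounds chain lengths in $\pP(\phi)$ via additivity of $|Z|$ along the ray $\bbR_{>0}\cdot e^{\sqrt{-1}\pi\phi}$.

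What each approach buys: the paper's method is tied to the geometry of $\Coh(C)$ (the rank vector decreases), while yours is the standard ``image of $Z$ is bounded away from $0$'' mechanism that works in any abelian heart and, as you note at the end, immediately upgrades to finite length of $\pP(I)$ for any interval of width less than one. Your argument is slightly cleaner and more portable; the paper's is more hands-on but perhaps obscures the uniform reason behind both cases.
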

\begin{proof}
Suppose $E$ is a $\sigma$-semistable sheaf of phase $\phi$ for $\phi \in(0,1)$.
Take an exact sequence in $\pP(\phi)$
\[
0 \rightarrow F \rightarrow E \rightarrow G \rightarrow 0.
\]
If $\rk_i(E) = \rk_i(F)$ for all $i =1,\ldots,n$, then the phase of $G$ is not $\phi$.
Then, $\pP(\phi)$ is of finite length for $\phi \in(0,1)$.
When $\phi$ is one, all sheaves in $\pP(1)$ are torsion sheaves. Then, $\pP(1)$ is also of finite length.
\end{proof}

\begin{prop}\label{section62}
Let $\sigma = (Z,\aA_k)$ be a stability condition whose central charge is defined by
\[
Z(E) = -\chi(E) + r_1\rk_1(E) + \cdots + z_n\rk_n(E).
\]where $r$ is a rational number with $k-1<r<k$.
Then, the corresponding addictive subcategories $\pP(\phi)$ are of finite length.
\end{prop}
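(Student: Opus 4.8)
The plan is to adapt the argument of Proposition~\ref{section61}. As recalled above, it suffices to prove that every slice $\pP_k(\phi)$ of the slicing $\pP_k$ associated to $(Z,\aA_k)$ is of finite length; since $\pP_k(\phi+1)=\pP_k(\phi)[1]$ it is enough to treat $\phi\in(0,1]$, and I would split this into the cases $\phi\in(0,1)$ and $\phi=1$. The preliminary fact to record is that $\rk_i(E)\ge 0$ for every $E\in\aA_k$ and every $i\ge 2$: this holds on the generators $\oO_{\Theta_1}(l)[1]$ of $\fF_k[1]$ (they are supported on $C_1$, so $\rk_i=0$ for $i\ge 2$) and on the sheaves in $\tT_k$, and it is additive, hence holds on all of $\aA_k=\langle\fF_k[1],\tT_k\rangle_{ex}$. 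Therefore $\Imm Z(E)=\sum_{i\ge 2}\Imm(z_i)\rk_i(E)\ge 0$ for $E\in\aA_k$, with equality exactly when $\rk_i(E)=0$ for all $i\ge 2$; equivalently (as phase $1$ is the maximal phase in $\aA_k$, every such object is automatically semistable), $\pP_k(1)=\{E\in\aA_k\mid \rk_i(E)=0,\ i=2,\dots,n\}$.

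For $\phi\in(0,1)$ the proof is the one for $\Coh(C)$. Given a chain of subobjects, or of quotients, inside the abelian category $\pP_k(\phi)$, I would look at the tuples $(\rk_2(E_j),\dots,\rk_n(E_j))\in\bbZ_{\ge 0}^{\,n-1}$: they are monotone and bounded by the tuple of the ambient object, hence eventually constant, and once they are constant the successive sub-quotients $Q$ satisfy $\rk_i(Q)=0$ for $i\ge 2$, so $\Imm Z(Q)=0$, so $Q$ has phase $1$ (or is $0$); but $Q\in\pP_k(\phi)$ with $\phi<1$, so $Q=0$. Thus every such chain stabilises, and $\pP_k(\phi)$ is of finite length.

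The main case is $\phi=1$, and it is here that rationality of $r_1$ enters. By Proposition~\ref{prop:stableofphaseone} the simple objects of $\pP_k(1)$ are the $\oO_x$ with $x\in C\setminus C_1$ together with $\oO_{\Theta_1}(k)[1]$ and $\oO_{\Theta_1}(k+1)$. More to the point, every $E\in\pP_k(1)$ has $\rk_i(E)=0$ for $i\ge 2$, so $Z(E)=-\chi(E)+r_1\rk_1(E)$ is a negative real and $|Z(E)|=\chi(E)-r_1\rk_1(E)$. Writing $r_1=p/q$ in lowest terms, $q\,|Z(E)|=q\chi(E)-p\,\rk_1(E)$ is a positive integer, so
\[
|Z(E)|\ge\tfrac{1}{q}\qquad\text{for every nonzero }E\in\pP_k(1).
\]
Since $-Z$ is additive and takes values in $\bbR_{\ge 0}$ on short exact sequences in $\pP_k(1)$, any strictly increasing or decreasing chain of subobjects of a fixed $E\in\pP_k(1)$ has all its consecutive sub-quotients of $|Z|\ge 1/q$, so it has at most $q\,|Z(E)|$ proper steps; hence $\pP_k(1)$ is of finite length. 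Combined with the quadratic form $Q$ produced exactly as in Proposition~\ref{prop:cov} together with \cite[Lemma~A.6.]{BMS:16}, this also gives the support property.

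The hard part is precisely the uniform bound $|Z(E)|\ge 1/q$ on objects of phase one: without rationality of $r_1$ the values $\chi-r_1\rk_1$ of positive central charges accumulate at $0$, which would allow infinite chains, so the hypothesis $r_1\in\bbQ$ cannot be dropped. Everything else is a bookkeeping variant of the computations already carried out for $(Z,\Coh(C))$.
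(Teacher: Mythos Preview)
Your proof is correct, and for $\phi\in(0,1)$ it matches the paper's argument (which the paper abbreviates to ``a similar argument as in the above proposition''). Your observation that one must track only $\rk_2,\dots,\rk_n$ rather than all ranks, because $\rk_1$ can be negative on $\aA_k$, is the right adaptation.

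For $\phi=1$ your route is genuinely different and cleaner. The paper argues by taking a chain in $\pP_k(1)$ with simple factors $\oO_{\Theta_1}(k)[1]$ or $\oO_{\Theta_1}(k+1)$, passes to the long exact sequences of cohomology sheaves $\hH^{-1},\hH^0$, and does a case analysis: first $\rk_1(\hH^{-1}(E_j))$ must eventually stabilise, and then $Z(\hH^0(E_j))$ is a strictly monotone real sequence lying in a discrete set and bounded in sign, giving a contradiction. You bypass all of this by noting directly that every nonzero $E\in\pP_k(1)$ has $\rk_i(E)=0$ for $i\ge 2$, hence $Z(E)=-\chi(E)+r_1\rk_1(E)\in(1/q)\bbZ_{<0}$, so $|Z|\ge 1/q$ and any chain in $\pP_k(1)$ has length at most $q|Z(E)|$. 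This is shorter, avoids the cohomology bookkeeping, does not need to enumerate the simple objects via Proposition~\ref{prop:stableofphaseone}, and makes transparent exactly where rationality of $r_1$ is used. The paper's approach, on the other hand, gives slightly more structural information about how the cohomology sheaves behave along such chains, though that information is not needed here.
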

\begin{proof}
Recall that the abelian category $\aA_k$ is defined by the following torsion pair:
\begin{align*}
&\fF_k = \langle\oO_{\Theta_1}(l) \mid l\leq k \rangle_{ex}\\
&\tT_k = {}^{\perp}\fF_k
\end{align*}
A Similar argument in the above proposition apply the case of $0<\phi<1$, we can see that $\pP(\phi)$ is of finite length for $\phi \in(0,1)$.
Recall that simple object $\pP(1)$ is $\oO_{\Theta_1}(k)[1]$ or $\oO_{\Theta_1}(k+1)$ by Proposition \ref{prop:stableofphaseone}.
When $\phi$ is one, we consider a chain of sub-objects in $\pP(1)$
\[
\dots \subset E_{j+1}\subset E_{j} \subset \dots \subset E_1 = E
\] whose factors are simple objects $\oO_{\Theta_1}(k)[1]$ or $\oO_{\Theta_1}(k+1)$ .

When the cokernel  $E_{j}/E_{j+1}$ is isomorphic to $\oO_{\Theta_1}(k)[1]$, we have a long exact sequence 
\[
0 \rightarrow \hH^{-1}(E_{j+1}) \rightarrow \hH^{-1}(E_{j}) \rightarrow \oO_{\Theta_1}(k)\rightarrow \cdots.
\] 
Since the cokernel of the map $\hH^{-1}(E_{j+1}) \rightarrow \hH^{-1}(E_{j})$ is contained by $\oO_{\Theta_1}(k)$, we conclude that $\hH^{-1}(E_{j})/\hH^{-1}(E_{j+1})$ is isomorphic to $0$ or a line bundle on $\Theta_1$.
Thus we conclude that $\rk_1(\hH^{-1}(E_{j+1})) < \rk_1(\hH^{-1}(E_{j}))$ or $\hH^{-1}(E_{j+1}) \cong \hH^{-1}(E_{j})$.
If $\hH^{-1}(E_{j+1}) \cong \hH^{-1}(E_{j})$, then there is an exact sequence
\[
0\rightarrow\oO_{\Theta_1}(k)\rightarrow \hH^0(E_{j+1}) \rightarrow \hH^0(E_j) \rightarrow 0
\] 
and  $Z(\hH^0(E_{j+1})) > Z(\hH^0(E_j))$

When the cokernel  $E_{j}/E_{j+1}$ is isomorphic to $\oO_{\Theta_1}(k+1)$, we have $\rk_1(\hH^{0}(E_{j+1})) < \rk_1(\hH^{0}(E_{j}))$ and $\hH^{-1}(E_{j+1}) \cong \hH^{-1}(E_{j})$. We assume $\hH^{-1}(E_{n})$ dose not depend on the choice of $n$.  In this case when $\hH^{-1}(E_{j+1}) \cong \hH^{-1}(E_{j})$, we also have $Z(\hH^0(E_{j+1})) > Z(\hH^0(E_j))$.

To show that this sequence stabilizes, we consider a rational number $Z(\hH^0(E_{j}))$.
By the definition of the central charge, $Z(\hH^0(E_{j}))$ is negative and the image of $Z$ is discrete.
In the both case, $Z(\hH^0(E_{j}))$ is strictly decreasing for all $j$, a contradiction.
Therefore, this sequence stabilizes.

\end{proof}

%\bibliographystyle{amsalpha}
%\bibliography{referencebib}
\newcommand{\etalchar}[1]{$^{#1}$}
\providecommand{\bysame}{\leavevmode\hbox to3em{\hrulefill}\thinspace}
\providecommand{\MR}{\relax\ifhmode\unskip\space\fi MR }
% \MRhref is called by the amsart/book/proc definition of \MR.
\providecommand{\MRhref}[2]{%
  \href{http://www.ams.org/mathscinet-getitem?mr=#1}{#2}
}
\providecommand{\href}[2]{#2}

\end{document}